\theoremstyle{plain}
\renewcommand{\theequation}{\arabic{section}.\arabic{equation}}
\renewcommand\thefigure{\thesection.\@arabic\c@figure}
\newtheorem{thm}{\bf Theorem}
\newenvironment{theorem}{\begin{thm}} {\end{thm}}
\newtheorem{cor}{\bf Corollary}
\newtheorem{lmm}{\bf Lemma}
\theoremstyle{remark}
\newtheorem{rem}{\bf Remark}[section]
\def \ri {{\rm i}}
\def \epsilon {{\varepsilon}}
\definecolor{bgblue}{rgb}{0.04,0.39,0.54}
\definecolor{lired}{rgb}{0.3, 0.0, 0.0}
\definecolor{ligreen}{rgb}{0.0, 0.3, 0.0}
\definecolor{liblue}{rgb}{0.9, 1.0, 1.0}
\definecolor{gray}{rgb}{0.6, 0.6, 0.6}
\definecolor{sky}{rgb}{0.3, 1.0, 1.0}
\definecolor{bunhong}{rgb}{1.0, 0.3, 1.0}
\definecolor{yellow}{rgb}{0.97, 1, 0.0}
\definecolor{liyellow}{rgb}{0.9, 0.8, 0.0}
\definecolor{cengse}{rgb}{0.00,0.40,0.29}
\def \cred {\color{red}}
\newcommand{\bs}[1]{\boldsymbol{#1}}
\renewcommand \wedge \times
\begin{document}
\bibliographystyle{plain}

{\title[Perfect absorbing layer for scattering problems] {A truly exact and optimal perfect absorbing layer   for time-harmonic acoustic wave scattering problems }
\author[
	Z. Yang,\,    L. Wang\,  $\&$\,  Y. Gao
	]{
		\;\; Zhiguo Yang${}^{1,2}$,   \;\;  Li-Lian Wang${}^{1}$ \;\; and\;\; Yang Gao${}^{1}$
		}
	\thanks{${}^{1}$Division of Mathematical Sciences, School of Physical
		and Mathematical Sciences, Nanyang Technological University,
		637371, Singapore. The research of the second author is partially supported by Singapore MOE AcRF Tier 2 Grants: MOE2017-T2-2-144 and MOE2018-T2-1-059. Emails: lilian@ntu.edu.sg (L. Wang) and GAOY0032@e.ntu.edu.sg (Y. Gao).
		 \\
		\indent ${}^{2}$Current address: Department of Mathematics, Purdue University, West Lafayette, IN 47907, USA. Email: yang1508@purdue.edu (Z. Yang)}

\keywords{Absorbing layer, perfectly matched layer, compression coordinate transformations,  substitution, high-order methods} \subjclass[2000]{65N35, 65N22, 65F05, 35J05}

\begin{abstract} In this   paper, we design  a truly exact and optimal perfect absorbing layer (PAL)  for
domain truncation of the two-dimensional  Helmholtz equation in an unbounded domain with bounded scatterers. This technique  is based on a complex compression coordinate transformation in polar coordinates, and
a judicious substitution of the unknown field in the artificial layer. Compared with  the widely-used
perfectly matched layer (PML) methods,  the distinctive  features of PAL 
lie in that
(i)  it is truly exact in the sense  that the PAL-solution is identical to the original solution in the bounded domain reduced by the truncation layer;
(ii) with the substitution, the PAL-equation is free of singular coefficients  and the substituted unknown field   is
 essentially non-oscillatory
in the layer; and
(iii) the construction is valid for general star-shaped domain truncation.
By formulating the  variational formulation in  Cartesian coordinates,
the implementation of this technique using standard spectral-element or finite-element methods can be made  easy  as a usual coding practice. We provide ample numerical examples to demonstrate that this method is highly accurate, parameter-free and robust for very high wave-number and thin layer. It  outperforms  the classical PML and the recently advocated PML using unbounded absorbing functions. Moreover, it  can fix some flaws of the PML approach. 
\end{abstract}
 \maketitle

\section{Introduction}

Many physical and engineering problems involving wave propagations are naturally set in  unbounded domains. Accurate simulation of such problems becomes exceedingly important  in a variety of applications.
Typically, the  first  step is to reduce  the unbounded domain to a bounded domain so that most of finite-domain solvers can be  applied.  The reduced problem should be  well-posed, and the underlying solution must be
as close as possible to the original solution in the truncated  domain.   As such,  the development of efficient and robust domain truncation techniques  has become  a research topic of longstanding interest.
Several notable  techniques 
  have been intensively  studied in literature,  which particularly   include the artificial boundary conditions (ABCs) (see, e.g., \cite{bayliss1980radiation,EM77,F83,Givoli91,Hagstrom99,Nedelec2001Book,HaSt07}), and  artificial absorbing (or sponge) layers (see, e.g., \cite{Bere94,chew19943d,Bere96a, Bere96b,collino1998perfectly,Taflove2005Book}).

   In regards to the former approach,  the local ABCs are   easy to implement, but they can only provide   low order accuracy with undesirable reflections at times.
   Alternatively,  domain truncation based on the  Dirichlet-to-Neumann (DtN) map, gives rise to an equivalent boundary-value problem (BVP),  so it is transparent
(or equivalently  non-reflecting).  However,
  such an ABC is nonlocal in both space and time, which brings about  substantial complexity  in implementation.  Moreover, it is only available for special artificial boundaries/surfaces (e.g., circle and sphere).
  For example, significant effort  is needed to seamlessly integrate DtN ABC with curvilinear spectral elements in two dimensions (cf.  \cite{Fournier06,yang2015accurate,yang2016seamless}),  but  the extension to three dimensions is  highly  non-trivial (cf. \cite{Wang2017ArXiv}). It is also noteworthy that
with a good  tradeoff between accuracy and efficiency,    the high order ABCs without high-order derivatives  become  appealing \cite{Givoli03}.

Pertinent  to the latter approach,   the  perfect matched layer (PML) first  proposed by Berenger  \cite{Bere94,Bere96a, Bere96b}  essentially
builds  upon surrounding  bounded scatterers by an  artificial layer of finite width. The artificial layer is filled with fictitious absorbing media that can attenuate the outgoing waves inside. Since this pioneering works of Berenger,  the PML technique  has become  a widespread tool  for various wave simulations;
undergone in-depth  analysis of its mathematical ground (see, e.g., \cite{Las.S98,chen2005adaptive,chen2008adaptive,chen2013adaptive} and the references cited therein); and  been populated into major softwares such as the COMSOL Multiphysics.
In the past decade, this subject area continues to inspire new developments, just to name a few:  \cite{bermudez2007exact, bermudez2007optimal,FYZWL15, Deng.L17,Sun.L18,DZR18}.
Remarkably,   the  essential idea of constructing PML \cite{Bere94}  can be interpreted as a complex coordinate stretching (or transformation) \cite{chew19943d,collino1998perfectly}.  Consider for example the circular  PML in polar coordinates for the two-dimensional  Helmholtz problem \cite{collino1998perfectly,chen2005adaptive}, where
 the domain  of interest is truncated and  surrounded by an artificial  annulus of width $d,$ with the assumption that
 the inhomogeneity of the media and scatterers  are enclosed by a circle $C_R$ of radius $R.$   The corresponding (radial) complex coordinate transformation that generates the anisotropic media inside the annular layer is of the form
\begin{equation}\label{t-transform}
\tilde r=r+ \ri   \int_R^r \sigma (s)\,  {\rm d} s,\quad R<r <R+d,
\end{equation}
where $\sigma(s)>0$ is called the {\em absorbing function} (ABF).  Note that  the Helmholtz  problem inside $C_R$ remains unchanged, i.e., $\tilde r=r.$  One typical choice of the ABF is
 \begin{equation}\label{onechoice}
 \sigma (s)=\sigma_0 (s-R)^n\big/d^n,\quad n=0,1,\cdots, 
 \end{equation}
 where  $\sigma_0>0$ is a tuning parameter. 
 The PML truncates domain at a finite distance and  attenuates the wave (i.e., the original solution) in the  annular layer by enforcing homogeneous Dirichlet boundary condition at the outer boundary $r=R+d,$ where some artificial reflections are usually induced.
In theory, the reflection can be  a less important issue based on  the fundamental analysis (see, e.g., \cite{Las.S98,collino1998perfectly,chen2005adaptive}). Lassas and Somersalo \cite{Las.S98} showed that the PML-solution converges to the exact solution exponentially when the thickness of the layer tends to infinity.  As pointed out by Collino and Monk   \cite{collino1998perfectly},  it is important to optimally choose the parameters to reduce the potentially increasing error in the discretisation.
 The use of adaptive techniques can significantly enhance the performance of PML as advocated  by  Chen and Liu \cite{chen2005adaptive}.
 It is noteworthy that the complex coordinate transformation \eqref{t-transform}-\eqref{onechoice} is also used  in developing the uniaxial PML in Cartesian coordinates along each coordinate direction \cite{singer2004perfectly,chen2013adaptive,chen2017PML}.
However, Singer and Turkel \cite{singer2004perfectly} demonstrated that such a transformation  can magnify the evanescent waves in the waveguide setting.  Recently, Zhou and Wu \cite{zhou2018adaptive} proposed to combine  the PML with few-mode DtN truncation to deal with the evanescent wave components.  It is noteworthy that according to \cite{pmlnotes,loh2009fundamental,FYZWL15}, the PML  has flaws and failures at times.    

An intriguing advancement is the ``exact''  and ``optimal" PML  using  {\em unbounded} ABFs $\sigma(\cdot)$  (see \eqref{UPMLun} and \eqref{SingleKernel} below),  developed by  Berm{\'u}dez et al. \cite{bermudez2007exact,bermudez2007optimal}  for time-harmonic acoustic wave scattering problems.  Indeed, it was shown in  \cite{Rabinovich2010C,RAM2015} through sophisticated comparison with the  classical PML  that  it  can be  ``parameter-free'' and has some other advantages.  However,  according to the error analysis in  \cite{singer2004perfectly}  (also see  
Theorem \ref{solu-error}),  this technique fails to be exact for the waveguide problem, but can improve the accuracy of the classical PML.  On the other hand, the unbounded ABFs lead to PML-equations with singular coefficients so much care is needed  to deal with the singularities, in particular,  for high wave-numbers and thin layers.


 In this paper, we propose a truly exact and optimal perfect absorbing layer  with  general star-shaped domain truncation of the exterior Helmholtz equation in  an unbounded domain.  
In spirit of our conference paper \cite{wangyangpal17},  its construction consists of  two indispensable building blocks:
\begin{itemize}
\item[(i)] Different from \eqref{t-transform}, we use a complex compression coordinate transformation of the form: $\tilde  r=\rho(r)+\ri (\rho(r)-R).$ Here $\rho(r)$ is a real mapping that compresses $\rho\in (R,\infty)$ into $r\in (R,R+d)$ along radial direction; 
\item [(ii)]  We introduce  a suitable substitution of the field in the artificial layer: $U=wV,$ where $w$   extracts the essential oscillation of $U,$ and also includes a singular factor to deal with the singular coefficients (induced by the coordinate transformation) of the resulted PAL-equation.  The field $V$ to be approximated is  well-behaved in the  layer.  We formulate the variational form of the PAL-equation (designed in polar coordinates) in Cartesian coordinates, so it is friendly for the implementation with the finite-element or  spectral-element solvers. 
\end{itemize}

We remark that the use of compression coordinate transformation is inspired by  the notion  of  ``inside-out''  invisibility cloak \cite{zharova2008inside} where   a real rational  mapping  was used to  compress an open space into a finite cloaking layer.  However, this cloaking device is far from perfect  (cf. \cite{wangyangpal17}).  To generate a perfect cloaking layer, we employ the complex compression mapping like the complex coordinator stretching in PML, in order to attenuate the compressed outgoing waves. Notably, we can show the truncation by the PAL is truly exact in the sense that the PAL-solution is identical to the original solution in the inner domain (exterior to the scatterer but inside the inner boundary of the artificial layer).  
The substitution in (ii) turns out critical for the success of  PAL technique for the reasons that this can  overcome the numerical difficulties of dealing with  singular coefficients  of the PAL-equation  and  remove the oscillations near the inner boundary of the layer.  Indeed, both the  analysis and ample numerical  evidences show that the new PAL method is highly accurate even for high wavenumber and thin layers.


The rest of the paper is organised  as follows. In Section \ref{sect::waveguide}, we present the essential idea of PAL using a waveguide problem in a semi-infinite domain. A delicate error estimate has been derived  for the PML and PAL methods,  and a comparison study has been conducted to PAL and PML with regular and unbounded  absorbing functions. In Section \ref{sect3:domain}, we start with a general set-up for the star-shaped truncated domain for exterior wave scattering problems, and provide new perspectives of the circular PAL reported in \cite{wangyangpal17}.    In Section \ref{lm:Sect4},  we provide in  detail the construction of the PAL-equation based on the complex compression coordinate transformation and the variable substitution technique used to eliminate the oscillation and singularity. In Section  \ref{sect5:numer}, ample numerical experiments are provided to demonstrate the performance of the PAL method, and show its advantages over the PML methods. 

%

\section{Waveguide problem in a semi-infinite channel}\label{sect::waveguide}

In this section, we elaborate on the essential idea of the new PAL,  and compare it with the classical  PML 
(cf.  \cite{Bere94,chew19943d,collino1998perfectly,singer2004perfectly}), and the ``exact'' and ``optimal" PML techniques  using unbounded (singular) absorbing functions (cf. Berm{\'u}dez et al.   \cite{bermudez2007exact,bermudez2007optimal})  in the waveguide setting. Indeed, such a relatively simpler context
enables us to conduct a precise error analysis, and better understand  the significant differences between two approaches.

To this end, we consider  the semi-infinite $x$-aligned waveguide from $x=0$ to $x=\infty$ (see Figure \ref{guidegeometry} (left)), governed by  the Helmholtz equation (cf.  \cite{singer2004perfectly,Rabinovich2010C}):
\begin{subequations}\label{extproblem}
\begin{align}
&\mathscr{L}[U]:=\Delta U+k^2 U=0\;\;\; {\rm in}\;\;\; \Omega_{\infty}:=\big\{ 0< x< \infty,\;  0< y < \pi \big\}; \label{ext1}\\[4pt]
&U(x,0)=U(x,\pi)=0,\;\; x\in [0,\infty);\;\;\;  U(0,y)=g(y), \;\;\;  y\in [0,\pi],  \label{extbd}
\end{align}
\end{subequations}
where the wave number $k>0,$ the field $U$ is outgoing,  and  $g\in L^2(0,\pi)$.  
We refer to Goldstein \cite{goldstein1982finite} for the outgoing radiation condition:
\begin{equation}\label{outgoing1}
 \frac{\partial U}{\partial x}- \sum_{l=1}^{\infty}  {\ri \hat k}\, \hat c_l\,  e^{\ri  \hat k_l x}  \sin(ly)=0 \;\;\;  {\rm with}\;\;\; U(x,y)=\sum_{l=1}^{\infty}  \hat c_l\,  e^{\ri  \hat k_l x}  \sin(ly),
\end{equation}
for any $x\ge x_1,$ 
where $\hat k_l:=\sqrt{k^2-l^2},$ and $\{\hat c_l\}$ can be determined by  given Dirichlet  data  at  $x=x_1$.
 Using the Fourier sine expansion in $y$-direction,  we find  readily  that   the problem
\eqref{extproblem}-\eqref{outgoing1}  admits the  series solution:
\begin{equation}\label{uxyexact}
U(x,y)=\sum_{l=1}^\infty  \hat g_l\, e^{\ri  \hat k_l x}\sin (ly) \;\;\; {\rm with}\;\;\; \hat g_l=\frac 2 \pi \int_0^\pi g(y) \sin(ly)\, {\rm d}y.
\end{equation}

\begin{figure}[!h]
\centering
\rotatebox[origin=cc]{-0}{\includegraphics[width=0.35\textwidth]{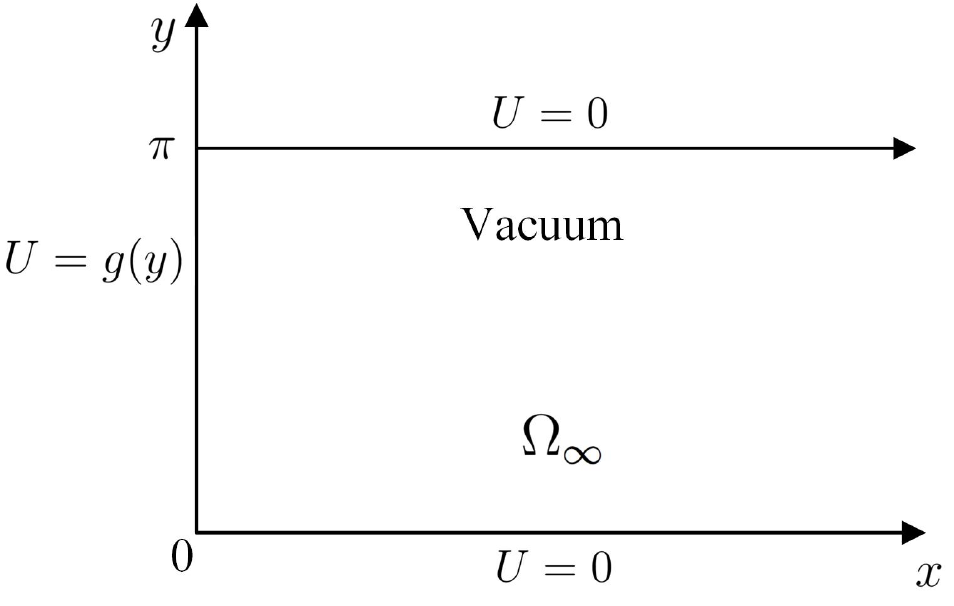}}\qquad 
\rotatebox[origin=cc]{-0}{\includegraphics[width=0.44\textwidth]{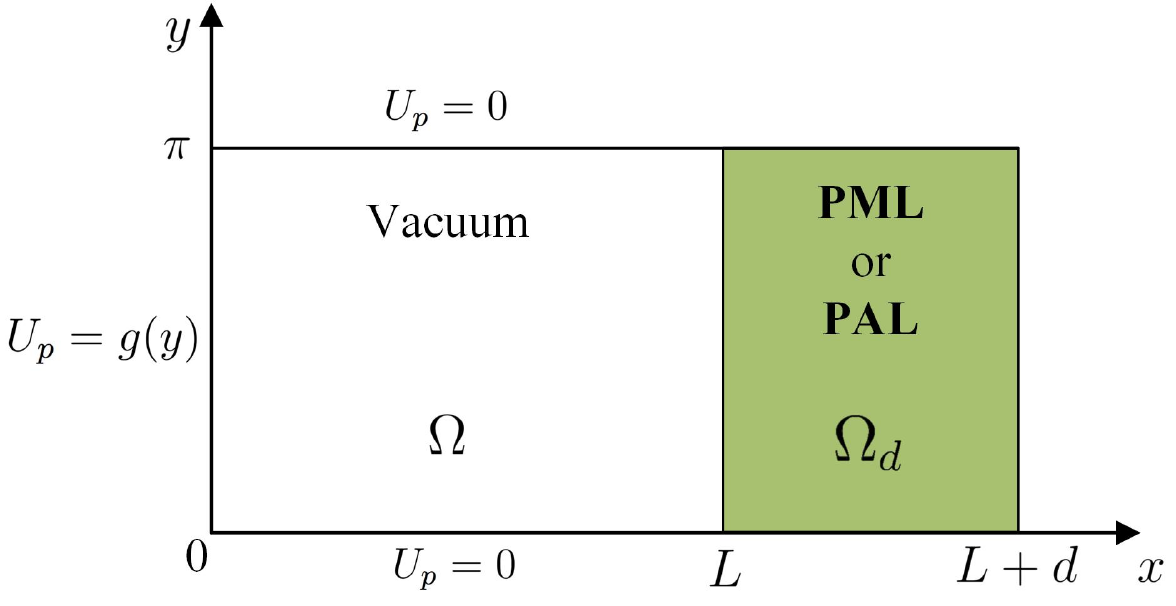}}
\caption{\small  Left: schematic diagram of  $x$-aligned semi-infinite waveguide. Right:  domain reduction by an artificial layer  using the PML or PAL technique.}
\label{guidegeometry}
 \end{figure}


\subsection{The PML technique  and its error analysis}\label{sect:waveguide}
To  solve \eqref{extproblem} numerically, one commonly-used approach is  the PML technique 
 which  reduces  the semi-infinite  strip   $\Omega_{\infty}$ in \eqref{extproblem} to
a rectangular domain $\Omega:=(0,L)\times (0,\pi)$  by  appending an artificial layer $\Omega_{d}:=(L,L+d)\times (0,\pi)$ with a finite thickness $d$  (see Figure \ref{guidegeometry} (right)). Typically, 
the  artificial layer $\Omega_d$ is  filled  with fictitious absorbing (or lossy) media that can attenuate  the  waves propagating into the layer. In practice, one would wish the layer can  diminish  the pollution (due to the reflection) of the original solution  in the ``physical domain''
$\Omega,$ and its thickness $d$ can be as small as possible to save computational cost.

The critical issue is to construct the governing equation in $\Omega_d.$ 
It is known that the PML-equation  can be obtained  from the Helmholtz equation: $(\Delta +k^2) U=0$ in $(\tilde x,\tilde y)$-coordinates through the complex coordinate stretching  (see, e.g.,  \cite{collino1998perfectly,singer2004perfectly}).  More precisely, we introduce the  complex  coordinate  transformation of the form:
  \begin{equation}\label{Scondf}
 \begin{cases}
 \tilde x=S(x),\quad \tilde y=y,  \quad \forall\, (x,y)\in \Omega\cup \Omega_d \quad \text{such that}
\\[3pt] S(x)=x, \;\;\; \forall\,  x\in (0,L);  \quad   \Re\{S'(x)\}>0,\  \Im\{S'(x)\}>0, \;\;\;  \forall\,  x\in (L,L+d),
 \end{cases}
 \end{equation}
where  $S(x)$ is a differentiable complex-valued  function.  One typically  chooses 
\begin{equation}\label{typicalcase}
S(x)=x+\frac{\ri} {k} \displaystyle \int_0^x \sigma(t)\, {\rm d}t, \;\;\; \forall\,  x\in (0,L+d),
\end{equation}
where $\sigma(\cdot)\ge 0$ is known as the {\em absorbing  function} (ABF).  
Then the  substitution
\begin{equation}\label{substituA}
\frac{\partial} {\partial x}\to \frac{\partial} {\partial \tilde x}=\frac{dx} {d\tilde x}  \frac{\partial} {\partial x}=\frac 1 {S'(x)} \frac{\partial} {\partial x} ,
\end{equation}
leads to 
 the PML-equation  (cf. \cite[(5)]{singer2004perfectly}):
\begin{equation}\label{PMLeq1}
\frac{\partial} {\partial x} \Big(\frac{1} {S'(x)} \frac{\partial U_{\rm p}}{\partial x} \Big)+
\frac{\partial} {\partial y} \Big( S'(x) \frac{\partial U_{\rm p}}{\partial y} \Big) +k^2 S'(x) U_{\rm p}=0, \quad \forall\,(x,y)\in \Omega\cup \Omega_d,
\end{equation}
which is supplemented with the transmission conditions at the interface  $x=L$:
\begin{equation}\label{tranmissionGuide}
U_{\rm p}\big|_{\Omega}= U_{\rm p}\big|_{\Omega_d},\quad   \frac{\partial U_{\rm p}}{\partial x}\Big|_{\Omega}=
 \frac{1} {S'(x)} \frac{\partial U_{\rm p}}{\partial x} \Big|_{\Omega_d},
\end{equation}
together with  the boundary conditions:
\begin{equation}\label{Upeqna}
\begin{split}
& U_{\rm p}(x,0)=U_{\rm p}(x,\pi)=0,\;\;\;\;  x\in (0,L+d); \quad\\
&  U_{\rm p}(0,y)=g(y), \;\;\;\;   U_{\rm p}(L+d,y)=0,\;\;\;\; y\in (0,\pi).
\end{split}
\end{equation}
It is noteworthy that with a suitable choice of the ABF $\sigma(t),$  the PML-solution  decays sufficiently fast  in the artificial layer,  so the homogeneous Dirichlet boundary condition is usually imposed at  $x=L+d.$

Singer and Turkel \cite{singer2004perfectly} conducted error analysis of the  PML with usual bounded ABFs.  Here, we provide a much more precise description  of the error between the original solution $U$ and the PML-solution $U_{\rm p}$ in the physical domain $\Omega$. 
\begin{thm}\label{solu-error}
 Let $U$ be the solution of \eqref{extproblem} given by \eqref{uxyexact}, and let $U_{\rm p}$ be the PML-solution of
  \eqref{PMLeq1}-\eqref{Upeqna}.
If   $k>0$ is not an  integer, then for any $(x,y)\in \Omega,$
\begin{equation}\label{erroreqnUUp}
U(x,y)=U_{\rm p}(x,y)+\sum_{l=1}^\infty \hat g_l\, R_l(x)\, e^{\ri  \hat k_l x}\sin (ly),\quad   R_l(x):= \frac{1- e^{-2\ri \hat k_{l} x} }{1-e^{- 2\ri \hat k_{l} S_d}}.
\end{equation}
If $k>0$ is an  integer,  then $R_l(x)$ for the mode $l=k$ in \eqref{erroreqnUUp} should be replaced by
\begin{equation}\label{newUp0}
 R_l(x)=-\frac{S(x)}{S_d},\quad \forall\, x\in (0,L).
\end{equation}
Moreover,  we have the following  bounds:
\begin{itemize}
\item[{\rm (i)}] for $0\le l<k,$
\begin{equation}\label{URerrors}
\frac{2 |\sin(\hat k_l x)|} {e^{2 \hat k_l  \Im\{S_d\}}+1} \le  |R_l(x)|\le \frac{2 |\sin(\hat k_l x)|} {e^{2 \hat k_l {\Im}\{ S_d\}}-1}, \quad \forall\, x\in (0,L); 
\end{equation}
\item[{\rm (ii)}] for $l>k,$
\begin{equation}\label{URerrors2}
\frac{e^{2|\hat k_l| x}-1} {e^{2 |\hat k_l|  {\Re}\{S_d\}}+1} \le  |R_l(x)|\le \frac{e^{2|\hat k_l| x}-1} {e^{2 |\hat k_l|  {\Re}\{S_d\}}-1},  \quad \forall\, x\in (0,L).
\end{equation}
\end{itemize}
In the above,  $\hat k_l:=\sqrt{k^2-l^2},\,   S_d:=S(L+d)$ and $S(x)$ is given by \eqref{Scondf}-\eqref{typicalcase}.
\end{thm}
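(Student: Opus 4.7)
\medskip
\noindent\textbf{Proof proposal.}  The plan is to reduce both the exact problem \eqref{extproblem} and the PML system \eqref{PMLeq1}--\eqref{Upeqna} to a family of decoupled one-dimensional two-point boundary value problems in $x$, parameterised by the Fourier mode $l$, by writing
\begin{equation*}
U(x,y)=\sum_{l=1}^{\infty}\hat g_l\, u_l(x)\,\sin(ly),\qquad
U_{\rm p}(x,y)=\sum_{l=1}^{\infty} u_l^{\rm p}(x)\,\sin(ly),
\end{equation*}
and comparing $u_l$ with $u_l^{\rm p}$ mode by mode.  Thanks to \eqref{uxyexact} we already know $\hat g_l u_l(x)=\hat g_l e^{\ri \hat k_l x}$.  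The main task is therefore to obtain a closed-form expression for $u_l^{\rm p}$ and to show that the difference $\hat g_l u_l-u_l^{\rm p}$ collapses to $\hat g_l R_l(x)\,e^{\ri \hat k_l x}$.

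\smallskip

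First I would exploit the observation that, under the complex change of variables $\tilde x = S(x)$, the PML equation \eqref{PMLeq1} is nothing but the ordinary Helmholtz equation in $(\tilde x,y)$.  Writing $V_l(\tilde x):=u_l^{\rm p}(x(\tilde x))$, each modal profile therefore satisfies $V_l''(\tilde x)+\hat k_l^{2}V_l(\tilde x)=0$, whose general solution is $V_l(\tilde x)=A_l e^{\ri \hat k_l \tilde x}+B_l e^{-\ri \hat k_l \tilde x}$; equivalently,
\begin{equation*}
u_l^{\rm p}(x)=A_l\,e^{\ri \hat k_l S(x)}+B_l\,e^{-\ri \hat k_l S(x)},\qquad x\in(0,L+d).
\end{equation*}
Because $S(0)=0$ and $S(L+d)=S_d$, the boundary data in \eqref{Upeqna} become the linear system $A_l+B_l=\hat g_l$ and $A_l e^{\ri \hat k_l S_d}+B_l e^{-\ri \hat k_l S_d}=0$, which I solve explicitly:
\begin{equation*}
A_l=-\frac{\hat g_l\,e^{-2\ri \hat k_l S_d}}{1-e^{-2\ri \hat k_l S_d}},\qquad
B_l=\frac{\hat g_l}{1-e^{-2\ri \hat k_l S_d}}.
\end{equation*}
Restricting to the physical region $x\in(0,L)$ where $S(x)=x$, a short computation using $\hat g_l-A_l=B_l$ gives
\begin{equation*}
\hat g_l e^{\ri \hat k_l x}-u_l^{\rm p}(x)=B_l\bigl(e^{\ri \hat k_l x}-e^{-\ri \hat k_l x}\bigr)
=\hat g_l\,\frac{1-e^{-2\ri \hat k_l x}}{1-e^{-2\ri \hat k_l S_d}}\,e^{\ri \hat k_l x},
\end{equation*}
which is exactly the formula \eqref{erroreqnUUp}.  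The condition that $k$ is not an integer is used here to guarantee $\hat k_l\neq 0$ for all $l\geq 1$, so that the denominator $1-e^{-2\ri \hat k_l S_d}$ does not vanish (recall $\operatorname{Im}S_d>0$).  When $k$ is a positive integer, the mode $l=k$ has $\hat k_l=0$ and the ODE degenerates to $V_k''=0$; the solution is then affine in $\tilde x$, and solving $V_k(0)=\hat g_k$, $V_k(S_d)=0$ yields $u_k^{\rm p}(x)=\hat g_k\bigl(1-S(x)/S_d\bigr)$, so that the contribution to $U-U_{\rm p}$ in that mode is $\hat g_k S(x)/S_d\,\sin(kx)$, which gives the exceptional formula \eqref{newUp0} (up to the sign convention stated there).

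\smallskip

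For the two bounds, I would simply analyse
$|R_l(x)|=\bigl|1-e^{-2\ri \hat k_l x}\bigr|\big/\bigl|1-e^{-2\ri \hat k_l S_d}\bigr|$
in the two regimes separately.  For $0\le l<k$, $\hat k_l$ is real and positive, so $|1-e^{-2\ri \hat k_l x}|=2|\sin(\hat k_l x)|$ while $|e^{-2\ri \hat k_l S_d}|=e^{2\hat k_l\operatorname{Im}S_d}>1$; the reverse triangle inequality $e^{2\hat k_l\operatorname{Im}S_d}-1\le|1-e^{-2\ri \hat k_l S_d}|\le e^{2\hat k_l\operatorname{Im}S_d}+1$ then yields \eqref{URerrors}.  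For $l>k$, I write $\hat k_l=\ri|\hat k_l|$ with $|\hat k_l|=\sqrt{l^2-k^2}>0$, so $e^{-2\ri \hat k_l x}=e^{2|\hat k_l|x}\in\mathbb{R}_{>1}$ and $|e^{-2\ri \hat k_l S_d}|=e^{2|\hat k_l|\operatorname{Re}S_d}$; the same two-sided triangle estimates give \eqref{URerrors2}.  The only step that requires real care, and the one I would treat last, is checking the signs and phase conventions so that the final identity \eqref{erroreqnUUp} is written in the precise form of the statement; everything else is a direct consequence of the Fourier/ODE reduction and elementary inequalities on moduli of complex exponentials.
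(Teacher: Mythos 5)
Your proposal is correct and follows essentially the same route as the paper's Appendix A: a mode-by-mode Fourier reduction, an explicit closed form for the PML modal solution (your $A_l e^{\ri \hat k_l S}+B_l e^{-\ri \hat k_l S}$ with the two boundary conditions is exactly the sine-quotient expression the paper writes down and verifies directly), restriction to $\Omega$ where $S(x)=x$, and the same elementary two-sided triangle-inequality estimates on $|1-e^{z}|$ for the bounds \eqref{URerrors}--\eqref{URerrors2}. Your observation about the sign in the degenerate mode $l=k$ is consistent with the paper's own appendix computation, which likewise produces $+S(x)/S_d$ relative to the convention of \eqref{erroreqnUUp}.
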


To avoid distraction from the main results,  we sketch the proof  in   Appendix \ref{AppendixA0}.
We see that for fixed $k, l,$  the decay rate of $R_{l}(x)$ is  completely determined by the values of
${\Re}\{S_d\}$ and   ${\Im}\{ S_d\},$ i.e., the choice of ABF $\sigma(t).$
We now  apply  Theorem \ref{solu-error} to access the performance of PML with some typical ABFs:
\begin{equation}\label{PMLclassic}
\text{PML}_n:\qquad \sigma(x)=\begin{cases}
0,\quad &{\rm if}\;\; x\in (0,L),\\[4pt]
\sigma_0 \Big(\dfrac{x-L}{d} \Big)^n,\quad & {\rm if}\;\; x\in(L,L+d),
\end{cases}
\end{equation}
  and
\begin{equation}\label{UPMLun}
\text{PML}_\infty:\qquad \sigma(x)=\begin{cases}
0,\quad &{\rm if}\;\; x\in (0,L),\\[4pt]
 \dfrac{ \sigma_0\, d}{L+d-x}, \;\;\;      & {\rm if}\;\; x\in(L,L+d),
\end{cases}
\end{equation}
where $\sigma_0>0$ is a tuneable  constant. 
We remark that  $\text{PML}_n$ with integer $n\ge 0,$ is commonly-used  (see, e.g.,  \cite{collino1998perfectly,singer2004perfectly,chen2005adaptive}), while 
 $\text{PML}_\infty$ was first introduced by Berm{\'u}dez et al.   \cite{bermudez2007exact,bermudez2007optimal} (which was advocated for  its being ``optimal", ``exact" and ``parameter-free" in various settings (cf.
 \cite{bermudez2007exact,bermudez2007optimal,Rabinovich2010C,Modave2014O,Cimpeanu2015A})).
It is clear that in $\Omega_d$,  we have
\begin{equation}\label{PMLclassic20}
\text{PML}_n:\;\;\;  S(x)=
x+\frac\ri k  \dfrac{d\,\sigma_0 }{n+1}\Big( \dfrac{x-L}{d}  \Big)^{n+1},\quad {\Re}\{S_d\}=L+d,\quad
 {\Im}\{S_d\}=\frac {\sigma_0\, d }{(n+1)k},
\end{equation}
and 
\begin{equation}\label{UPMLS}
\text{PML}_\infty:\;\;\; S(x)=x+\frac{\ri\, d\, \sigma_0 } k \ln\Big(\dfrac{d} {L+d-x}\Big),
\quad {\Re}\{S_d\}=L+d,\quad
 {\Im}\{S_d\}\to \infty.
\end{equation}

Observe that in both cases,  ${\Re}\{S_d\}$ does not depend on the choice of  ABF $\sigma(t)$. Consequently,   the errors corresponding to the evanescent wave components (i.e., $l>k$)  behave like 
\begin{equation}\label{lkmwave}
|R_l(x)|\sim  e^{-2\sqrt{l^2-k^2} (L+d-x)},\quad \forall\, x\in (0,L),
\end{equation}
so the only possibility to reduce the error is to increase the width $d.$

For the oscillatory wave components (i.e., $l<k$),  the PML$_\infty$ can make the error vanish (note: $R_l(x)=0$),  but for the PML$_n,$
\begin{equation}\label{FindUfrom2}
|R_l(x)|\sim
  2 \big|\sin \big(\sqrt{k^2-l^2} x\big)\big| e^{-2\sqrt{k^2-l^2}  {\sigma_0\,d}/{((n+1)k)}},\;\;\; \forall\, x\in (0,L),
\end{equation}
so one has to enlarge  the thickness  $d$ of the layer or choose large $\sigma_0\,d$ to reduce  the error.

As an illustrative example, we consider the exact solution  \eqref{extproblem}  with
$\{\hat g_l=\ri ^l J_l(k)\}$ (in order to mimic the plane wave expansion):
\begin{equation}\label{Utest}
U(x,y)=\sum_{l=1}^{\infty}\ri ^l J_l(k) e^{\ri \sqrt{k^2-l^2} x}\sin(ly), 
\end{equation}
where $J_{l}(\cdot)$ is the Bessel function. 
Define the error function:
\begin{equation}\label{erroreqnUUp2}
{\mathcal E}_{\rm p}(x,y)=U(x,y)-U_{\rm p}(x,y)=\sum_{l=1}^\infty \ri ^l J_l(k)\, R_l(x)\, e^{\ri  \sqrt{k^2-l^2} x}\sin (ly),\quad  \forall\, (x,y)\in \Omega.
\end{equation}
Here, we take $k=9.99, L=1, d=0.1,$ and truncate the  infinite series
for  $l\le 100$ so that the truncation error is negligible.   The error curves  $|{\mathcal E}_{\rm p}(x,y)|$ with fixed  $x=L/2, L$
and $y\in (0,\pi)$ in Figure \ref{errfunction} with PML$_2$ and PML$_\infty$ in \eqref{PMLclassic}-\eqref{UPMLun}.

%

\begin{figure}[!th]
\begin{center}
\rotatebox[origin=cc]{-0}{\includegraphics[width=0.4\textwidth,height=0.28\textwidth]{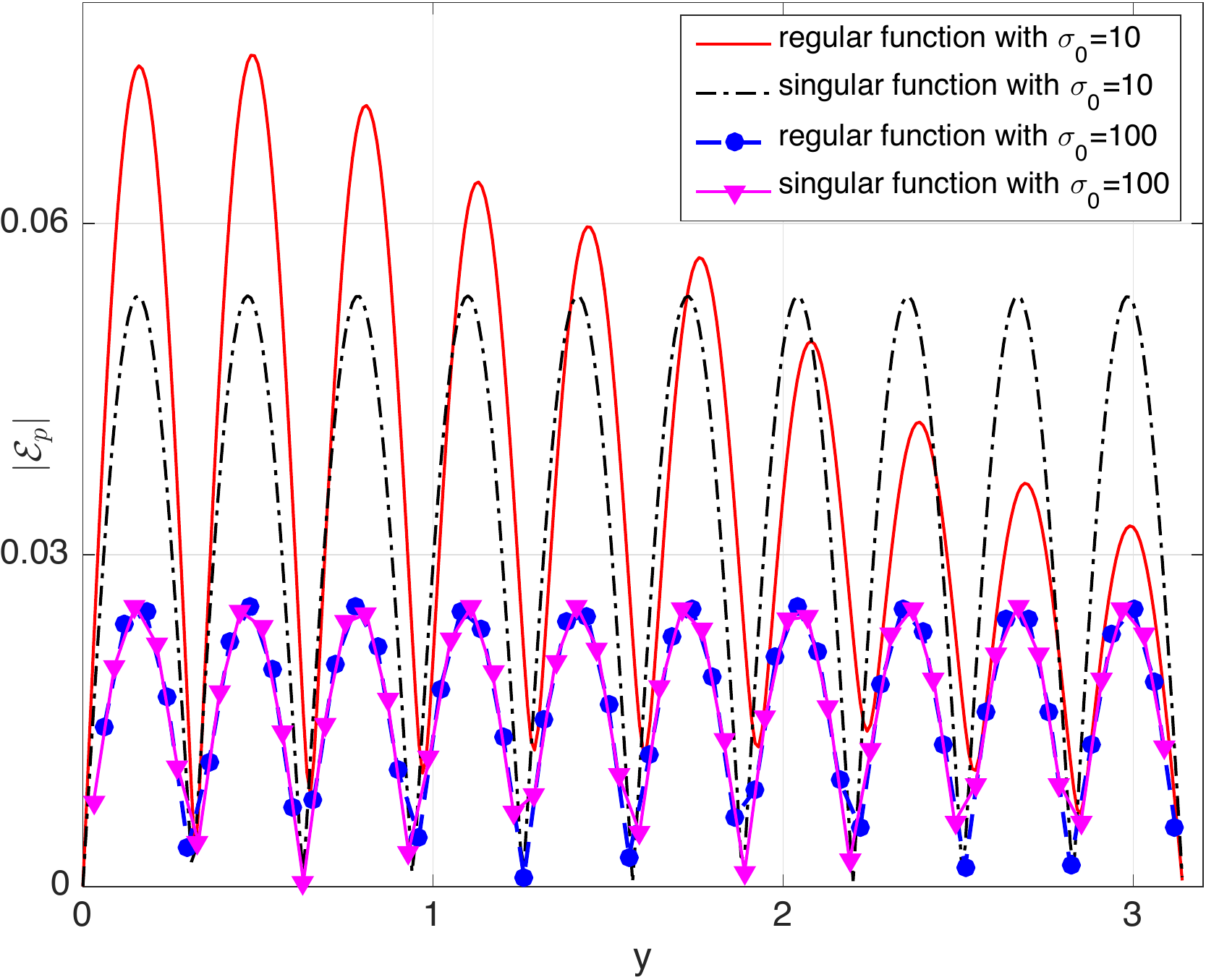}} \qquad
\rotatebox[origin=cc]{-0}{\includegraphics[width=0.4\textwidth,height=0.28\textwidth]{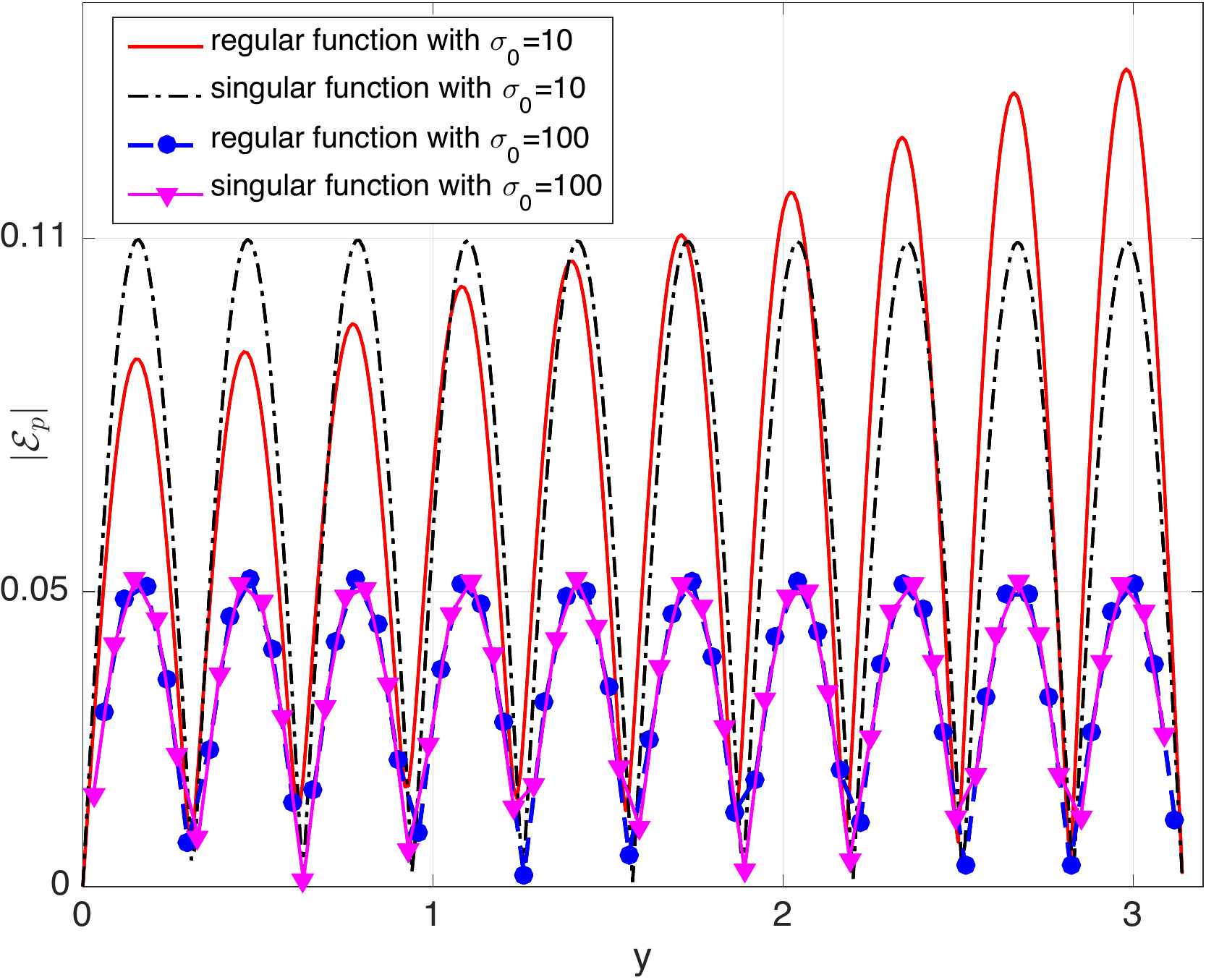}}
\caption{\small Profiles  of the error function $|\mathcal{E}_p(x,y)|$ of the solution \eqref{Utest} with $k=9.99,L=1, d=0.1,$ and $\sigma_0/k=10,100$ using   PML$_2$ and PML$_\infty$.     Left: the error curve
$|\mathcal{E}_p(L/2,y)|$ for $y\in (0,\pi).$  Right: the error curve: $|\mathcal{E}_p(L,y)|$ for  $y\in (0,\pi).$   }
\label{errfunction}
\end{center}
 \end{figure}

Observe from Figure \ref{errfunction}  that  the errors of the PML$_2$  become slightly smaller as $\sigma_0$ increases,  but they are still  large. The PML$_\infty$ using unbounded absorbing functions
 performs relatively  better, but does not  significantly improve the classical PML.
 
 \begin{rem} {\em The recent work  \cite{zhou2018adaptive} introduced the few-mode DtN technique to deal with the evanescent wave components, while the other modes were treated with the classical PML$_n$. }
 \end{rem}
 
 \begin{rem}\label{oneDversion} {\em As shown in \cite{bermudez2007exact}, the PML$_\infty$ can completely damp the planar waves of the form: $e^{\ri k_x x+\ri k_y y}$ with $k_x=k\cos\theta_0, k_y=k\sin\theta_0,$ where $\theta_0$ is the incident angle at the left boundary $x=0$ of the half-plane waveguide: $\Delta u+k^2u=0, \; x>0, \; -\infty<y<\infty.$ We also refer to \cite{Cimpeanu2015A} for some other successful scenarios.    However, it fails to be ``exact'' and ``optimal'' in this setting.  }
  \end{rem}

\begin{rem}\label{PMLremark}{\em Note from \eqref{UPMLS} that for the PML$_\infty,$  $S'(x)=1+\ri\, \sigma(x)/k \to \infty$ as $x\to L+d,$
so the coefficients in \eqref{PMLeq1} are singular at the outer boundary $x=L+d.$
In \cite{bermudez2007exact,bermudez2007optimal},  the use of e.g, Gauss quadrature to avoid  sampling the unbounded  endpoints was suggested for evaluating the matrices of the linear system in finite-element discretisation. However,  for large wavenumber $k$ and very thin layer,
 much care is needed to deal with  the singularity to achieve high order. Moreover, in more complex situations, e.g., the circular/spherical PML, $S(x)$ also appears in the PML-equation, so the logarithmic singularity poses even more challenge  in numerical discretisation. }
\end{rem}

\subsection{New PAL technique and its error analysis}\label{Subsect:22} As  reported in  \cite{wangyangpal17}, the new PAL  was inspired by the design of the ``inside-out" invisibility cloak  (cf. \cite{zharova2008inside}) using the notion of transformation optics (cf. \cite{Pendry.2006}).
 In  \cite{zharova2008inside},  the real rational transformation was introduced  to construct  the media and design the clocking layer:
\begin{equation}\label{newguids}
\rho=\rho(x)=L+\frac{d(x-L)}{L+d-x},\quad \, x \in (L, L+d),\;\;\; \rho\in (L,\infty),
\end{equation}
which compresses the outgoing waves  in  the infinite strip: $\rho>L$ into the finite layer: $(L, L+d).$
However, it is known that any attempt of using a real compression coordinate transform fails to work for Helmholtz and Maxwell's equations.  Indeed,  according to \cite{pmlnotes},  {``any real coordinate mapping from an infinite to a finite domain will result in solutions that oscillate infinitely fast as  the boundary is approached -- such fast oscillations cannot be represented by any finite-resolution grid, and will instead effectively form a reflecting hard wall.'' }


   To break the curse of infinite oscillation,  we propose  the  complex compression coordinate transformation (C$^3$T), that is,
    $S(x)=x$ for $x\in (0,L),$ and
\begin{equation}\label{SxformNew}
S(x)=S_{R}(x)+{\ri}\,  S_{I}(x), \;\;\;\;      x\in (L, L+d),
\end{equation}
where the real part:  $S_R(x)= \sigma_1 \rho(x)$ and the imaginary part: $S_I(x)=\sigma_0(\rho(x)-L)/k.$
Like \eqref{UPMLun}, the imaginary part $S_I(x)$ involves an unbounded ABF:
\begin{equation}\label{ourchoice}
   S_I(x)= 
   \frac 1 k \int_L^x \sigma(t)\, {\rm d}t, \quad \sigma(x)= \sigma_0\, \Big(\frac{d}{L+d-x}\Big)^2,\quad x\in (L, L+d).
\end{equation}
For clarity, we denote the PAL-solution by $U_{\rm a}.$ Thanks to \eqref{substituA}, we obtain the PAL-equation  as the counterpart of  \eqref{PMLeq1}-\eqref{Upeqna}:
\begin{equation}\label{PALsystem}
\begin{split}
& \frac{\partial} {\partial x} \Big(\frac{1} {S'(x)} \frac{\partial U_{\rm a}}{\partial x} \Big)+
\frac{\partial} {\partial y} \Big( S'(x) \frac{\partial U_{\rm a}}{\partial y} \Big) +k^2 S'(x) U_{\rm a}=0, \quad (x,y)\in \Omega\cup \Omega_d,\\[4pt]
& U_{\rm a}\big|_{\Omega}= U_{\rm a}\big|_{\Omega_d},\quad   \frac{\partial U_{\rm a}}{\partial x}\Big|_{\Omega}=
 \frac{1} {S'(x)} \frac{\partial U_{\rm a}}{\partial x} \Big|_{\Omega_d}\quad {\rm at}\;\;\;  x=L,\\[4pt]
& U_{\rm a}(x,0)=U_{\rm a}(x,\pi)=0; \quad U_{\rm a}(0,y)=g(y), \;\;  U_{\rm a}(L+d,y)=0.
\end{split}
\end{equation}
Importantly, we can show that the new PAL  is truly exact and non-reflecting. 
\begin{thm}\label{exact} Let $U$ be the solution of \eqref{extproblem}, and  $U_{\rm a}$ be the solution of \eqref{PALsystem}. Then we have
 \begin{equation}\label{equivla}
 U_{\rm a}(x,y)\equiv U(x,y),\quad \forall\, (x,y)\in \Omega.
 \end{equation}
\end{thm}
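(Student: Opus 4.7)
The plan is to construct a candidate for $U_{\rm a}$ by a mode-by-mode extension of $U$ via the C$^3$T and then invoke uniqueness of the PAL-system. The guiding principle is that \eqref{PALsystem} is nothing but the pullback of $(\Delta+k^2)\tilde u=0$ under the complex map $\tilde x=S(x)$, so every outgoing separated mode of $U$ extends from $\Omega$ to $\Omega_d$ simply by the substitution $x\mapsto S(x)$.

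Expanding $U_{\rm a}(x,y)=\sum_{l=1}^{\infty}a_l(x)\sin(ly)$, which is legitimate in view of the Dirichlet conditions at $y=0,\pi$, orthogonality reduces \eqref{PALsystem} to a family of scalar two-point problems. On $(0,L)$ one obtains $a_l''+\hat k_l^{\,2}a_l=0$, while on $(L,L+d)$ the substitution $\tilde a_l(\tilde x):=a_l(x)$ with $\tilde x=S(x)$, together with $\partial_x=S'(x)\partial_{\tilde x}$, converts the PAL operator into $\tilde a_l''(\tilde x)+\hat k_l^{\,2}\tilde a_l(\tilde x)=0$. Thus on each subinterval the mode has the form $A_l e^{\ri\hat k_l\tilde x}+B_l e^{-\ri\hat k_l\tilde x}$ (with $\tilde x=x$ on $\Omega$ and $\tilde x=S(x)$ on $\Omega_d$), and the transmission conditions in \eqref{PALsystem} amount, in the $\tilde x$-variable, to continuity of $\tilde a_l$ and $\tilde a_l'$ across $\tilde x=L$.

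Next I would fix the constants from the data. The input $g(y)=\sum_l\hat g_l\sin(ly)$ forces $a_l(0)=\hat g_l$, while the outer condition $U_{\rm a}(L+d,y)=0$ demands $\tilde a_l(S(L+d))=0$. The non-integrability of the ABF in \eqref{ourchoice} gives $\Im\{S(x)\}\to\infty$ as $x\to L+d$, and $\rho(x)\to\infty$ gives $\Re\{S(x)\}\to\infty$ as well. A short case analysis then shows that $|e^{-\ri\hat k_l\tilde x}|\to\infty$ at $x=L+d$ for every mode -- growing like $e^{\hat k_l\Im\{S\}}$ when $l<k$ and like $e^{\sqrt{l^2-k^2}\,\Re\{S\}}$ when $l>k$ -- whereas $|e^{\ri\hat k_l\tilde x}|\to 0$. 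Hence the incoming coefficient $B_l$ must vanish for every $l$, and continuity at $x=L$ (where $S(L)=L$) together with $a_l(0)=\hat g_l$ pins down $A_l=\hat g_l$ throughout. Assembling,
\begin{equation*}
U_{\rm a}(x,y)=\sum_{l=1}^{\infty}\hat g_l\,e^{\ri\hat k_l S(x)}\sin(ly),
\end{equation*}
which on $\Omega$, where $S(x)=x$, reduces to \eqref{uxyexact} and is exactly \eqref{equivla}.

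The incidental verifications are convergence of the series in a suitable variational space -- immediate from the exponential decay in $l$ wherever $\Re\{S(x)\}>0$ together with $g\in L^2(0,\pi)$ -- and the outer Dirichlet trace, which follows by dominated convergence from the term-wise decay to zero at $x=L+d$. The chief obstacle is justifying the mode-by-mode deduction itself, i.e., uniqueness of the PAL-system: the coefficients $1/S'(x)$ and $S'(x)$ in \eqref{PALsystem} blow up at $x=L+d$, so the standard coercivity/Fredholm argument used for bounded ABFs does not transfer verbatim. The cleanest remedy is to set up the variational form in a weighted Sobolev space adapted to the C$^3$T, or, equivalently, to extract uniqueness directly from the separation of variables by observing that any homogeneous solution produces modes $a_l$ satisfying the above ODE with $a_l(0)=0$ and the exponential dichotomy at $x=L+d$, whence $a_l\equiv 0$.
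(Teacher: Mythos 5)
Your proof is correct, but it takes a genuinely different route from the paper's. The paper disposes of Theorem \ref{exact} in three lines by invoking the error representation of Theorem \ref{solu-error}: since the C$^3$T \eqref{SxformNew} gives $\Re\{S(L+d)\}=\Im\{S(L+d)\}=\infty$, the bounds \eqref{URerrors}--\eqref{URerrors2} force $R_l(x)=0$ for every mode and every $x\in(0,L)$, whence $U_{\rm a}\equiv U$ in $\Omega$. You bypass Theorem \ref{solu-error} altogether: you separate variables in \eqref{PALsystem} directly, convert the layer ODE to $\tilde a_l''+\hat k_l^2\tilde a_l=0$ via $\tilde x=S(x)$, and use the exponential dichotomy at the singular endpoint $x=L+d$ (where both $\Re S$ and $\Im S$ blow up) to kill the incoming coefficient, arriving at $U_{\rm a}=\sum_l\hat g_l e^{\ri \hat k_l S(x)}\sin(ly)$, which on $\Omega$ reduces to \eqref{uxyexact}. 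What each buys: the paper's argument is shorter, but it tacitly extends a formula derived for finite $S_d$ to the limit $S_d=\infty$, where the mode solution \eqref{exactUp23} degenerates; your construction treats the singular endpoint honestly, but must then face uniqueness for \eqref{PALsystem}, which you rightly flag as the main burden and close, at the same level of rigor as the paper, by the mode-wise dichotomy argument (this presupposes that any solution of \eqref{PALsystem} admits a termwise sine expansion, an assumption the paper also makes implicitly in Appendix \ref{AppendixA0}, which moreover never proves well-posedness of the PAL system either). Two minor points: your case split $l<k$ versus $l>k$ omits the resonant mode $l=k$ when $k$ is an integer (there the layer solutions are $A+BS(x)$ and $|S|\to\infty$ again eliminates the growing branch; the paper covers this via \eqref{newUp0}), and you should state explicitly that the outer Dirichlet condition is met only in the limiting (trace) sense, since $e^{\ri\hat k_l S(x)}$ merely tends to zero as $x\to L+d$ — a point you gesture at with dominated convergence but that deserves one sentence.
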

\begin{proof} Note that the error formula  in Theorem \ref{solu-error} is valid for   general  $S(x)$.  It is evident that by  \eqref{SxformNew},   we have ${\Re}\{S(L+d)\}=\infty$ and ${\Im}\{S(L+d)\}=\infty.$
Thus, it follows immediately from \eqref{URerrors}-\eqref{URerrors2} that
$R_l(x)=0$ uniformly for all $l$ and $x\in (0,L),$ which implies  $U_{\rm a}(x,y)\equiv U(x,y)$ in $\Omega.$
\end{proof}

Different from PML,  the truncation by the  new PAL is exact at continuous level, but the compression coordinate transformation induces singular coefficients at $x=L+d$ in the PAL-equation \eqref{PALsystem}, which causes some numerical difficulties in discretization. To  overcome this, we introduce the substitution of the unknown:
\begin{equation}\label{uxycaseBA}
U_{\rm a}(x,y)=w(x) V_{\rm a}(x,y),\quad   w(x)=
\dfrac{L+d-x}{d},\;\;\;  x\in (L,L+d),
\end{equation}
with  $w(x)=1$ for $x\in (0,L).$
In fact,  the transformed PAL-equation in the new unknown $V_a$ is free of singularity.  
To show this, it's more convenient to  work with the variational  form.  
Denote
$${\mathbb U}:=\big\{u=wv : v\in H^1(\Omega\cup \Omega_d), \; v(x,0)=v(x,\pi)=0,\;\; x\in (0,L+d) \big\},$$
and  introduce  the  sesquilinear form on ${\mathbb U}\times {\mathbb U}$:
\begin{equation}\label{caseA}
{\mathcal B}(U_{\rm a},\varPsi)=(\bs C\, \nabla U_{\rm a}, \nabla \varPsi) -k^2(S'\, U_{\rm a},\varPsi),
\end{equation}
where $\bs C={\rm diag}(1/S'(x),S'(x)),$ and $(\cdot,\cdot)$ is the inner product of $L^2(\Omega\cup\Omega_d).$

 In view of  \eqref{ourchoice},  a direct calculation leads to   $ w^2(x) S'(x)=\alpha=\sigma_1+\ri \sigma_0/k.$
Then, substituting $U_{\rm a}=wV_{\rm a}$  and $\varPsi=w \varPhi$ into \eqref{caseA}, we obtain from  direct calculation that
\begin{equation}\label{newformA}
\begin{split}
\widetilde {\mathcal B}_{_{\Omega_d}}(V_{\rm a},\varPhi)={\mathcal B}_{_{\Omega_d}}(wV_{\rm a},w \varPhi)=
(\widetilde {\bs C}\, \widetilde \nabla V_{\rm a}, \widetilde \nabla \varPhi)_{\Omega_d} -\alpha k^2( V_{\rm a},\varPhi)_{\Omega_d},
\end{split}
\end{equation}
where $(\cdot,\cdot)_{\Omega_d}$ is the inner product on the artificial layer  $\Omega_d,$ and
\begin{equation}\label{defnUaPhi}
\widetilde {\bs C}={\rm diag}\big(w^2(x)/\alpha,\; \alpha\big),\quad  \widetilde \nabla=\big(w(x)\partial_x-1/d, w(x)\partial_y\big).
\end{equation}
It is seen that the substitution can absorb the singular coefficients.  In the implementation,
 one can easily  incorporate the substitution into the basis functions and directly approximate $U_{\rm a}$. 

We conclude this section with some numerical results. 
Consider \eqref{extproblem} with $k=29.9$ and the boundary source term is prescribed as $g=\sin(5y)-\sin(30y)$ in \eqref{extbd}. The semi-infinite  strip   $\Omega_{\infty}$ in \eqref{extproblem} is reduced to a rectangular domain $\Omega:=(0,1)\times (0,\pi)$  by  appending the PAL layer $\Omega_{d}:=(1,1+d)\times (0,\pi)$ with a finite thickness $d$. Spectral element method based on the sesquilinear form \eqref{newformA} is adopted for computation. Numerical results obtained by PAL ($\sigma_0=\sigma_1=1$) are also compared with PML technique with bounded and unbounded absorbing functions, i.e., PML$_n$ ($n=1$, $\sigma_0=10$) and PML$_\infty$ ($\sigma_0=10$) in \eqref{RegularKernel}-\eqref{SingleKernel}, respectively.   

\begin{figure}[htbp]
\begin{center}
  \subfigure[$d=0.1$ ]{ \includegraphics[scale=.3]{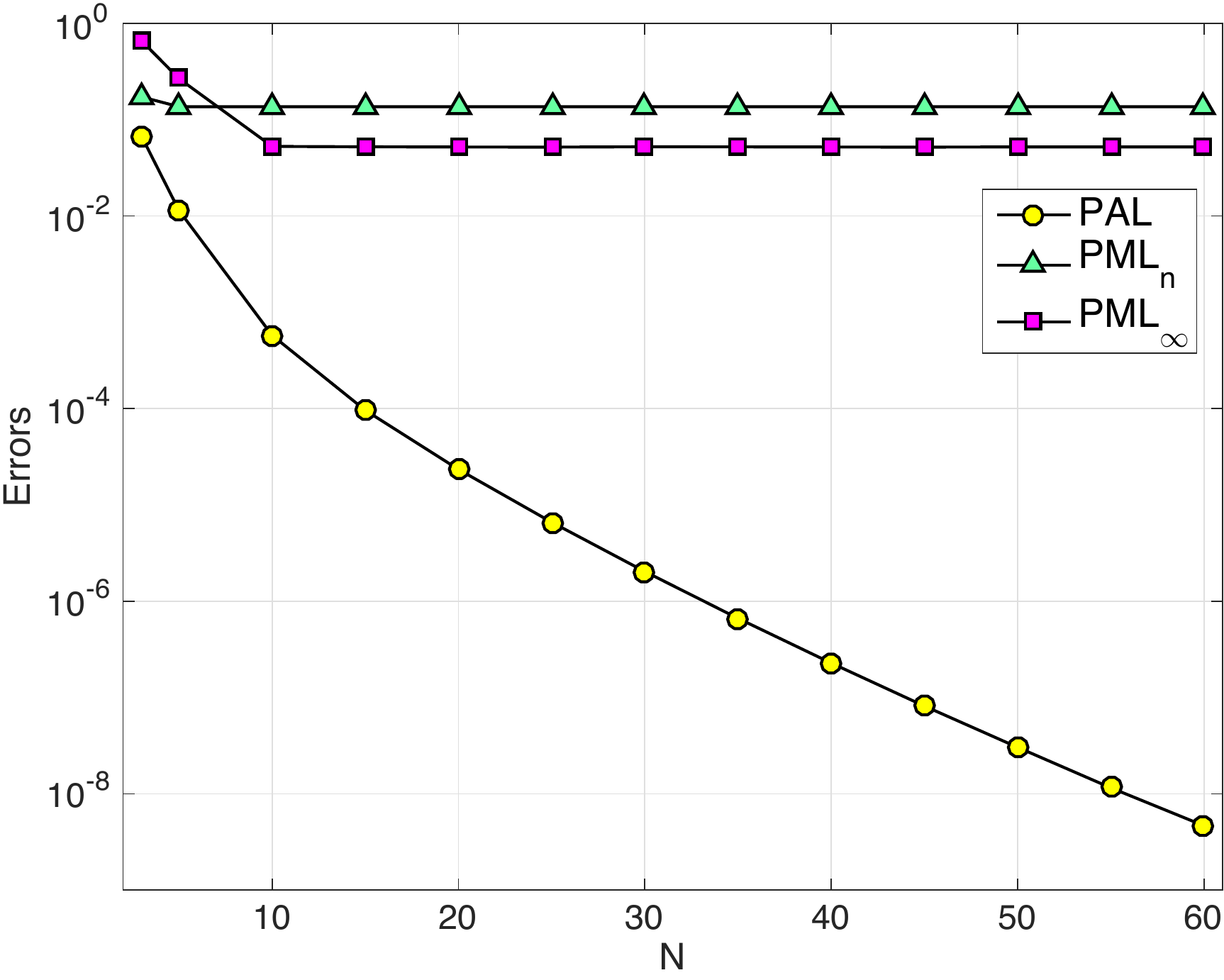}}\qquad
  \subfigure[$d=0.5$ ]{ \includegraphics[scale=.3]{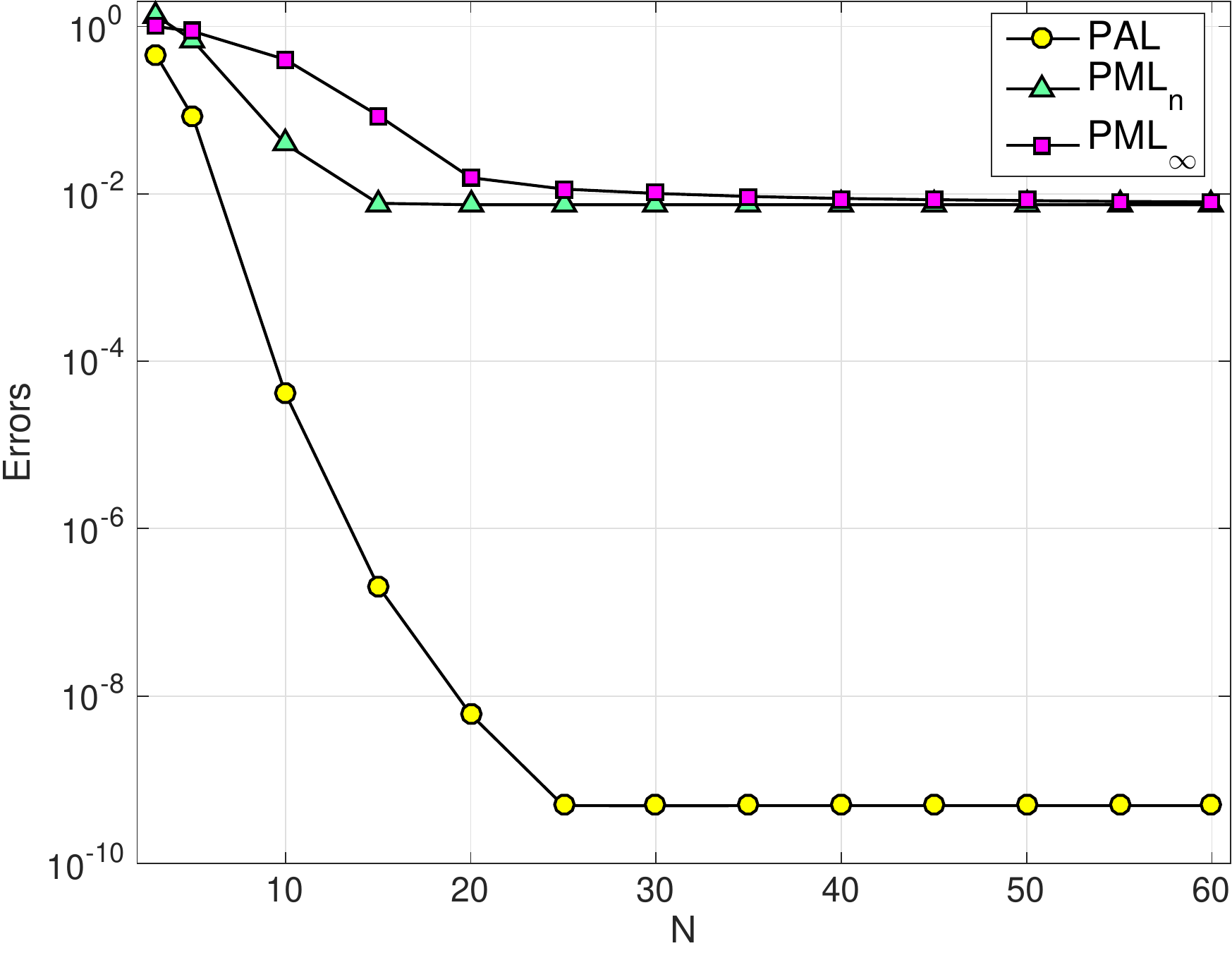}}\;\;
    \caption{\small A comparasion study for waveguide problem: error for  PAL, $\text{PML}_n$ and $\text{PML}_{\infty}$ against $N$ for $k=29.9$ with different layer thickness (a) $d=0.1$ and (b) $d=0.5$.} 
   \label{fig: comparestywaveguide}
\end{center}
\end{figure}
In Figure \ref{fig: comparestywaveguide}, we depict the $L^{\infty}$ errors for the numerical solution with these three truncation methods. It can be seen that the errors of the PAL method 
decreases exponentially to $10^{-10}$ as $N$ increases. However, due to the existence of evanescent modes, the errors saturated at around $10^{-1}$ and $10^{-2}$ for PML method with $d=0.1$ and $d=0.5$, respectively. As analysed previously, the saturation level can only be improved by an increased  layer thickness $d$,  which is prohibitive due to the increased computational cost.

\section{Star-shaped domain truncation and circular PAL}\label{sect3:domain}
\setcounter{equation}{0}
\setcounter{lmm}{0}
\setcounter{thm}{0}

One of the main purposes of this paper is to  design the PAL with a general star-shaped domain truncation for solving the two-dimensional  time-harmonic acoustic wave scattering problems.   More precisely, we consider the exterior domain:
\begin{subequations} \label{extproblem2d}
\begin{align}
& \mathscr{L}[U]:=\Delta U+k^2 U=f \quad {\rm in} \;\;\;  \Omega_e:=\mathbb{R}^2\setminus \bar D; \label{curlformA}\\
&  U=g\;\;\;  {\rm on}\;\;\partial D; 
\qquad \Big|\frac{\partial U}{\partial r} -\ri k U\Big|\le  \frac c r \quad      {\rm for}  \;\;\; r=: |\bs x| \gg 1, \label{silvmulA}
\end{align}
\end{subequations}
where   $D\subset {\mathbb R}^2$ is a bounded scatterer with  Lipschitz boundary $\Gamma_{\!D}=\partial D,$
and $g\in H^{1/2}(\Gamma_{\!D}).$ Here,  we assume that the source $f$ is compactly supported in a disk $B(\supset D).$
The far-field condition in \eqref{silvmulA} is known as the Sommerfeld radiation boundary condition. The PAL technique to be introduced  is  applicable to  the  Helmholtz problems with other types of boundary conditions such as Dirichlet or impendence boundary condition on   $\Gamma_{\!D},$ and also to solve acoustic
 wave propagations   in inhomogeneous media in  bounded domains.

We start with a general set-up for the star-shaped truncated domain, and provide new perspectives of the circular PAL reported in 
 \cite{wangyangpal17}, which shed lights on the study of  general PAL with star-shaped domain truncation  in the forthcoming section.  

\subsection{Star-shaped truncated domain}\label{mainPAL-Eqn}

As illustrated in Figure \ref{geometry2d}, we enclose $ \bar D$ by a star-shaped domain $\Omega_{\rm s}$ with respect to the origin. Assume that  the boundary  of  $\Omega_{\rm s}$ is piecewise smooth  with the parametric form in the  polar coordinates, viz.,
\begin{equation}\label{omegaS}
\Gamma_{\!R_1}:=\partial \Omega_{\rm s}=\big\{(r,\theta)\,:\,r=R_1(\theta),\;\;\theta\in [0,2\pi)\big\},
\end{equation}
or equivalently,  $\Gamma_{\!R_1}$ has the parametric form in Cartesian coordinates:
\begin{equation}\label{omegaSC}
\Gamma_{\!R_1}=\big\{\bs x=(x,y)\,:\, x=R_1(\theta)\cos \theta,\;  y=R_1(\theta)\cos \theta,\;  \theta\in [0,2\pi)\big\}.
\end{equation}
Then the  artificial layer is formed by surrounding  $\Omega_{\rm s}$ with
\begin{equation}\label{PAL-Omega}
\Omega_{\varrho}^{\rm PAL}:=\big\{(r,\theta)\,:\, R_1(\theta)<r<R_2(\theta):=\varrho R_1(\theta),\;\; \theta\in [0,2\pi)\big\},
\end{equation}
where the constant $\varrho>1$ can tune the ``thickness'' of the layer.   The layer $\Omega_{\varrho}^{\rm PAL}$  provides a  star-shaped domain truncation of the unbounded domain $\Omega_{e}$. We further denote the domain of interest   and the real computational domain, respectively, by
\begin{equation}\label{phusical}
 \Omega:=\Omega_{\rm s}\setminus \bar D,  \quad \Omega_{\rm c}:=\Omega_{\varrho}^{\rm PAL}\cup \Omega \cup \Gamma_{\!R_1},
\end{equation}
where we need to approximation the original solution in $\Omega,$ but have to  couple the original equation in $\Omega$ with the artificial equation in  $\Omega_{\varrho}^{\rm PAL}$ in real computation.

%
%

\medskip

\begin{figure}[htbp]
\begin{center}
  \includegraphics[scale=.45]{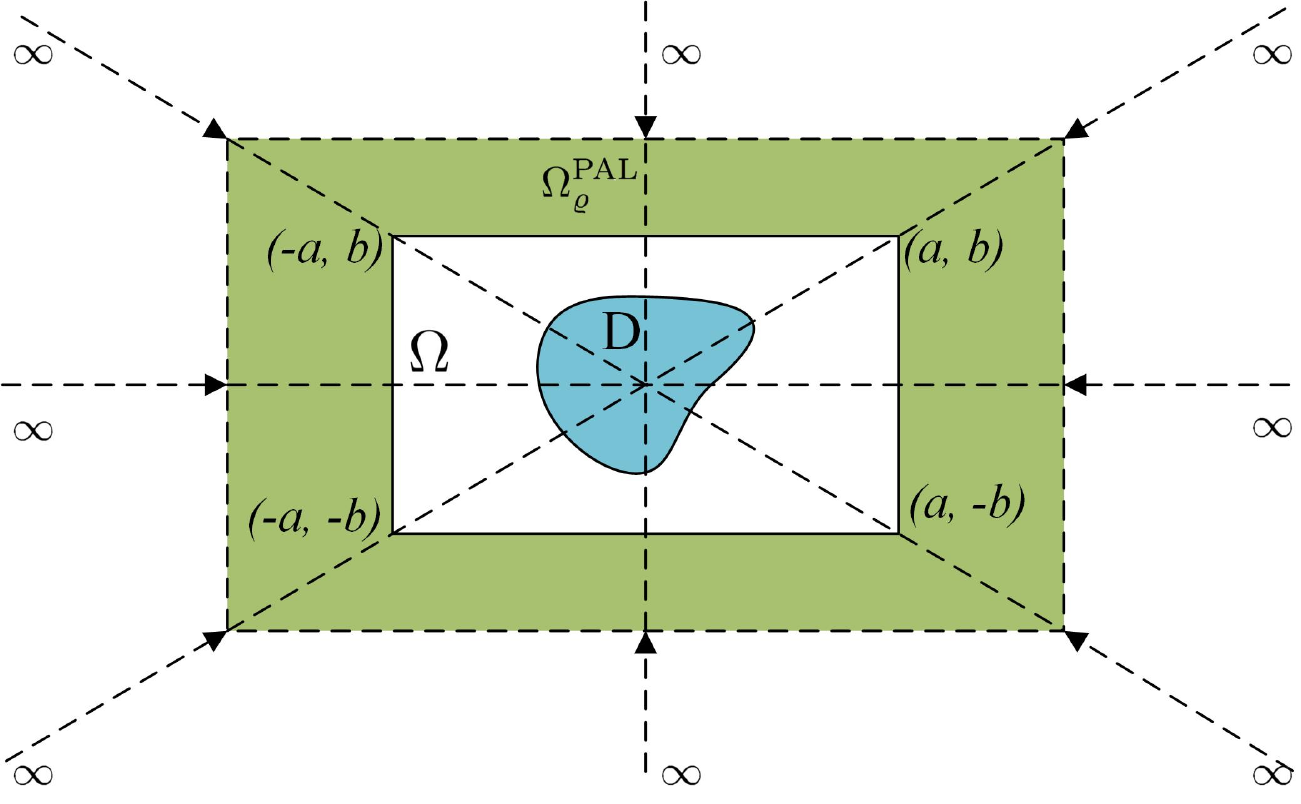} \qquad
    \includegraphics[scale=.45]{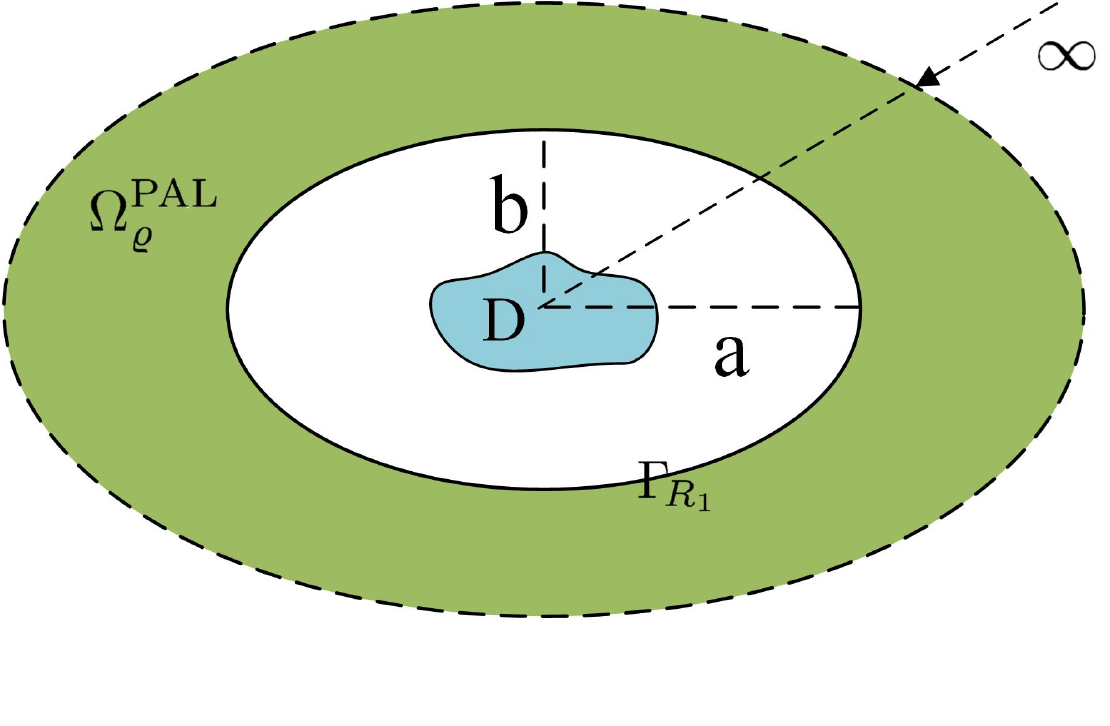}
    \caption{\small Schematic illustration of the rectangular PAL with four trapezoidal patches (left), the elliptical PAL (right), and the compression transformation from radial direction.}
   \label{geometry2d}
\end{center}
\end{figure}

Since the choice of the truncated domains is {\em a prior} arbitrary, it could be an advantageous to choose non-classical shapes to offer more flexibility to deal with non-standard geometry of the scatterer and inhomogeneity of the media.
 It is important to note that  the configuration of the artificial layer is solely determined by the parametric form of $R_1(\theta)$ and the tuning ``thickness" parameter $\varrho.$
We list below some typical  examples of such star-shaped domain truncation.
\begin{itemize}
\item[(i)]\,  In the circular case, the artificial layer  is an annulus, i.e.,
\begin{equation}\label{circularOmega}
  \Omega_{\varrho}^{\rm PAL}=\{R_1<r<R_2=\varrho R_1,\;0\le \theta<2\pi\},
\end{equation}
where $R_1$ is independent of $\theta.$  As a variant, the ``perturbed''  annular layer takes the form:
\begin{equation}\label{circularOmegaB}
  \Omega_{\varrho}^{\rm PAL}=\{R_1(\theta)=a+b\sin\theta <r<R_2(\theta)=\varrho R_1(\theta),\;0\le \theta<2\pi\},
\end{equation}
where $a,b>0$ are some given constants.


\item[(ii)]\, In the rectangular case,  we take for instance  the boundary $\Gamma_{\!R_1}$ as a square with four vertices:
$(a,b), (-a,b), (-a,-b), $  $ (a,-b) $ with  $ a,b>0$ (see Figure \ref{geometry2d} (left)).  Then we have
\begin{equation}\label{rect-Omega}
R_1(\theta)=\begin{cases}
a \sec \theta,\quad & \theta\in [0,\theta_0)\cup [2\pi-\theta_0, 2\pi),\\[2pt]
b \csc \theta,\quad & \theta\in [\theta_0,\pi-\theta_0),\\[2pt]
-a \sec \theta,\quad & \theta\in  [\pi-\theta_0, \pi+\theta_0),\\[2pt]
-b \csc \theta,\quad & \theta\in [\pi+\theta_0, 2\pi-\theta_0),
\end{cases}\qquad \theta_0=\arctan \frac b a.
\end{equation}
Similarly, we can consider a general rectangular domain truncation.

\item[(iii)]\, If we choose  $\Gamma_{\!R_1}$ to be an ellipse:
$\frac{x^2}{a^2}+\frac{y^2}{b^2}=1$ with $a>b>0$ (see Figure \ref{geometry2d} (right)),  then we have
\begin{equation}\label{ellitpitic}
R_1(\theta)=\frac{ab}{\sqrt{b^2+(a^2-b^2)\sin^2\theta}},\quad \theta\in [0,2\pi).
\end{equation}

\end{itemize}


Now, the key issue is  how to construct the governing equation in the artificial  layer. In practice,  one  wishes  (i) the solution of the resulted coupled problem in $\Omega_{\rm c}$ can  approximate the original solution $U|_{\Omega}$ as accurate as possible to avoid the  pollution  of the truncation, but (ii) the layer $\Omega_{\varrho}^{\rm PAL}$ should be thin enough to save computational cost. 
To show the essence of designing the PAL-equation for the above general star-shaped truncated domain, we first recap on the circular PAL proposed in  \cite{wangyangpal17}, but explore this technique from a very different viewpoint.

\subsection{Some new perspectives of the circular PAL} As some new insights, we next show  that the governing equation (in the annular layer: $r\in (R_1,R_2)$)  in  \cite{wangyangpal17} can  be obtained from the B\'enenger equation (in the unbounded domain $\rho>R_1$, see Collino and Monk \cite{collino1998perfectly}) by a real compression transformation.
Then  we can claim the exactness of the PAL technique --  the PAL-solution for $r<R_1$ coincides with the original solution $U|_{r<R_1}.$ This should be in contrast with the PML technique \cite{collino1998perfectly,chen2005adaptive}, where the governing equation in the layer
is obtained by naively  truncating the
 B\'enenger equation in unbounded domain at $r=R_2$,  and then impose then homogeneous Dirichlet boundary condition.

Like  \eqref{newguids},  Zharova et al. \cite{zharova2008inside} introduced the real compression transformation
\begin{equation}\label{rrhorelate00}
\rho:=\rho(r)=R_1+ (R_2-R_1) \dfrac{r-R_1}{R_2-r}\quad {\rm or} \quad  r=R_2-\frac{(R_2-R_1)^2}{\rho+R_2-2R_1},
\end{equation}
for $\rho\in (R_1,\infty)$ and  $r\in (R_1,R_2), $ to design the inside-out (or inverse) invisibility cloak and also  a  matched layer. 
 In principle, it compresses all the infinite space: $R_1<\rho<\infty$ into the finite  annulus:  $R_1<r<R_2,$ where ideally  the wave propagation is expected to be
equivalent to the wave propagation in the infinite space.
However, such a technique fails to work, as   the numerical approximation of the waves within the layer    suffers   from  {\em the curse of infinite oscillation} \cite{pmlnotes}.

Following \cite{wangyangpal17}, we propose to fill the cloaking layer with lossy media (i.e., complex material parameters),
and deal with  the singular media by using a   suitable substitution of unknowns.
More precisely, we introduce the compression complex coordinate transformation in polar coordinates:  
\begin{equation}\label{asBeqn00}
\begin{split}
& \tilde r=S(r)= \begin{cases}
r, & r<R_1,\\[2pt]
R_1+\sigma_1 T(r)+\ri\,  {\sigma_0}  \, T(r),\; &
R_1<r<R_2,
\end{cases}\quad \tilde \theta=\theta, \;\;\; \theta\in [0,2\pi),
\end{split}
\end{equation}
where $\sigma_0,\sigma_1>0$ are tuning parameters, and
\begin{equation}\label{TtransformA00}
T(r)=\dfrac{(R_2-R_1)(r-R_1)}{R_2-r}=(R_2-R_1)^2\int_{R_1}^r \frac{{\rm d}t} {(R_2-t)^2} ,\quad r\in (R_1,R_2).
\end{equation}
It is noteworthy  that $\Im\{\tilde r\}$ defines a compression mapping between $(0,\infty)$ and $(R_1,R_2);$ and
the parameters $\sigma_0,\sigma_1$  can be $k$-dependent, e.g., a constant multiple of $1/k$.

The PAL-equation can be obtained by applying the complex coordinate transformation \eqref{asBeqn00} to the Helmholtz
problem \eqref{extproblem2d} in  $(\tilde r,\tilde \theta)$-coordinates (see \cite{wangyangpal17}):
\begin{equation}\label{circularA}
\begin{cases}
\displaystyle\frac 1 r \frac{\partial}{\partial r}\Big(\frac{\beta r}{\alpha} \frac{\partial U_{\rm PAL}}{\partial r}\Big)+\frac{1}{r^2} \frac{\alpha} {\beta}
 \frac{\partial^2 U_{\rm PAL}}{\partial \theta^2}
+\alpha\beta\,k^2\, U_{\rm PAL}=f,\quad {\rm in}\;\; \Omega\cup {\Omega}_{\varrho}^{\rm PAL},\\[10pt]
 U_{\rm PAL}=g\;\;\;  {\rm on}\;\; \Gamma_{\!D},\quad  
 |U_{\rm PAL}| \;\; \text{is uniformly bounded  as $r \to R_2$},
\end{cases}
\end{equation}
together with the usual transmission conditions at $\rho=R_1.$ Here, we denoted
\begin{equation}\label{alphatalg}
\alpha=\frac{d\tilde r}{dr},\quad \beta=\frac{\tilde r} r.\quad 
\end{equation}

%
%
%

\subsubsection{B\'erenger's equations and PML techniques}
 In \cite{wangyangpal17}, we adopted the transformed Sommerfeld radiation boundary condition as $r\to R_2.$
In fact, it is only necessary to impose the uniform boundedness to guarantee the unique solvability and exactness with  $U|_{r\le R_1}=U_{\rm p}|_{r\le R_1}$ (see Theorem \ref{abstration} below).  To justify this, we next show that the PAL-equation \eqref{circularA} can be derived from the B\'erenger's equation (in unbounded domain) in \cite{collino1998perfectly}.
%
Indeed, using separation of variables,  the solution $U$ of the Helmholtz problem \eqref{extproblem2d} exterior to the circle: $r=R_1$ can be written as
 \begin{equation}\label{BUexa}
 U(\bs x)=\sum_{|m|=0}^\infty a_m\, \frac{H^{(1)}_m (kr)}{H^{(1)}_m (kR_1)} e^{\ri m\theta}, \quad  r\ge R_1,
 \end{equation}
 where $\bs x=r e^{\ri \theta},$ $H^{(1)}_m $ is the Hankel function of first kind and order $m,$ and  $\{a_m\}$ are the Fourier expansion coefficients of $U$ at $r=R_1.$ This  series  converges uniformly for $r\ge R_1$ (cf. \cite{Nedelec2001Book}). According to  \cite{collino1998perfectly}, the B\'erenger's  idea to design PML in the cylindrical coordinates can be interpreted as  stretching  the solution \eqref{BUexa} to the complex domain so that the waves become evanescent.
 Recall  the asymptotic behaviour of the Hankel function {\rm(}cf. \cite{Abr.I64}{\rm)}:
\begin{equation}\label{HapxA1}
|H_m^{(1)}(k\tilde r)|\sim \Big|\sqrt{\frac{2}{\pi k\tilde r}}\,e^{\ri(k (\Re \{\tilde r\}+\ri \Im\{\tilde r\})-\frac{1}{2}m \pi-\frac{1}{4}\pi)}\Big|=\sqrt{\frac{2}{\pi k|\tilde r|}}e^{-k \Im\{\tilde r\}},
\end{equation}
for $-\pi< {\rm arg}\{\tilde r\}<2\pi.$ This implies that  the extension should be made in the upper half-plane  such that
$\Im\{\tilde r\}\to \infty $ as $|\tilde r|\to \infty.$  In the PML technique, one uses the complex change of variables:
\begin{equation}\label{complexvariable}
\tilde r=\tilde r(\rho)=\begin{cases}
\rho,\quad & {\rm for}\;\; \rho<R_1, \\[4pt]
\rho+\ri \displaystyle\int_{R_1}^\rho \hat \sigma(t){\rm d} t,\quad & {\rm for}\;\; R_1\le \rho<\infty,
\end{cases}
\end{equation}
where in general, the absorbing function $\hat \sigma$ satisfies
\begin{equation}\label{hatsigms}
\hat\sigma\in C(\mathbb R), \quad \hat \sigma\ge 0 \quad {\rm and}\quad   \lim_{r\to\infty}\int_{R_1}^r \hat \sigma(t){\rm d} t=\infty.
\end{equation}
Denote $\hat \alpha=\tilde r'(\rho)$ and $\hat \beta=\tilde r(\rho)/\rho.$
Applying  the coordinate transformation \eqref{complexvariable} to the Helmholtz equation exterior to the circle of radius  $R_1$,  we can obtain the  B\'erenger's problem of computing the B\'erenger's solution $U_{\rm B}$ in the form (cf.  \cite{collino1998perfectly}):
\begin{equation}\label{circularA0}
\begin{cases}
 \displaystyle\frac 1 \rho  \displaystyle \frac{\partial}{\partial \rho}\Big(\frac{\hat \beta \rho}{\hat \alpha} \frac{\partial U_{\rm B}}{\partial \rho }\Big)+\frac{1}{\rho^2} \frac{\hat \alpha} {\hat \beta}
 \frac{\partial^2 U_{\rm B}}{\partial \theta^2}
+\hat\alpha\hat \beta\,k^2\, U_{\rm B}=f\quad {\rm in}\;\;  \Omega_e=\mathbb{R}^2\setminus \bar D;\\[10pt]
 U_{\rm B}=g\;\;\;  {\rm on}\;\; \Gamma_{\!D},  \\[2pt]
 |U_{\rm B}| \;\; \text{is uniformly bounded  as $\rho \to \infty$}\,,
\end{cases}
\end{equation}
together with the usual transmission conditions at $\rho=R_1.$ Note that $\hat \alpha=\hat \beta=1$ for $|\bs x|\le R_1.$
According to \cite[Theorem 1]{collino1998perfectly},
the problem \eqref{circularA0}  admits a unique solution, and  for any $|\bs x|\ge R_1,$ the B\'erenger's solution takes the form
 \begin{equation}\label{BUexa2}
 U_{\rm B}(\bs x)=\sum_{|m|=0}^\infty a_m\, \frac{H^{(1)}_m (k\tilde r (\rho))}{H^{(1)}_m (kR_1)} e^{\ri m\theta},
 \end{equation}
 where $\{a_m\}$ are the same as in \eqref{BUexa}.   In other words, {\em the B\'enenger's solution coincides with the solution of the original problem \eqref{extproblem2d} for $r<R_1.$}

 As shown in \cite{collino1998perfectly,chen2005adaptive},   the PML technique directly truncates  \eqref{circularA0} at $\rho=R_2,$ and the homogeneous boundary condition: $U_{\rm PML}=0$ at $\rho=R_2$ is then imposed.  More precisely, we have the following PML-equation:
 \begin{equation}\label{circularA01}
\begin{cases}
 \displaystyle\frac 1 r  \displaystyle \frac{\partial}{\partial r}\Big(\frac{\hat \beta r}{\hat \alpha} \frac{\partial  U_{\rm PML}}{\partial r}\Big)+\frac{1}{r^2} \frac{\hat \alpha} {\hat \beta}
 \frac{\partial^2  U_{\rm PML}}{\partial \theta^2}
+\hat\alpha\hat \beta\,k^2\,  U_{\rm PML}=f\quad {\rm in}\;\;  \Omega_e \cap \{|\bs x|< R_2\};\\[10pt]
  U_{\rm PML}\big|_{\Gamma_{\!D}}=g\,;  \quad  U_{\rm PML}\big|_{r=R_2}=0,
\end{cases}
\end{equation}
where we set $r=\rho$ as the independent variable for clarity.
 Like the waveguide setting in \eqref{PMLclassic} -\eqref{UPMLun},  the following two types of absorbing functions have been used in practice.
 \begin{itemize}
 \item[(i)]  PML$_n$ with bounded (or regular) ABFs  (see, e.g.,  \cite{collino1998perfectly,chen2005adaptive}):
\begin{equation}\label{RegularKernel}
\hat \sigma(t)=\Big(\frac{t-R_1}{R_2-R_1}\Big)^n,\quad  {\rm so}\quad  \tilde r=r+ \ri \, \sigma_0\, \frac{  R_2-R_1  }{n+1}
\Big(\frac{r-R_1}{R_2-R_1}\Big)^{n+1}, \quad  r\in (R_1,R_2),
\end{equation}
where $n$ is a positive integer.  
We refer to  \cite{chen2005adaptive} for the detailed error analysis, and also the very recent work  \cite{Li.W19} for the insightful wavenumber explicit error estimates.

\medskip 
\item[(iv)] PML$_\infty$ with   unbounded   (or singular) ABFs  (see \cite{bermudez2007exact,bermudez2007optimal}):
\begin{equation}\label{SingleKernel}
\hat \sigma(t)=\frac{ 1}{R_2-t},\quad  {\rm so}\quad  \tilde r=r+\ri\, \sigma_0   \,\ln \Big(\frac{R_2-R_1}{R_2-r}\Big), \quad  r\in (R_1,R_2).
\end{equation}
Compared with the  PML$_n$,  the PML$_\infty$  renders the solution  decay at an infinite rate near the outer boundary $r=R_2.$
It is therefore not surprising it is  parameter-free  \cite{Cimpeanu2015A}.  However, from the above analysis, we infer that the PML-equation \eqref{circularA01} with unbounded ABFs \eqref{SingleKernel} does  not really exactly solves the original problem in
$\Omega.$ In addition, the coefficients: $\hat \alpha=\tilde r'(r)$ and $\hat \beta=\tilde r(r)/r$ are singular at $r=R_2,$ which
brings about numerical difficulties  in  realisation.
 \end{itemize}

\subsubsection{Equivalence of  B\'erenger's problem and PAL equation}
 We next show that  in contrast to the PML technique, our proposed PAL-equation exactly solves the transformed problem \eqref{circularA0} by further transforming it to a bounded domain by using a real compression  mapping.
 \begin{thm}\label{abstration} Let $U_{\rm B}(\rho,\theta)$ be the  solution of the B\'enenger's problem  \eqref{circularA0} with
   $\hat \sigma=\sigma_0/\sigma_1$  in  \eqref{complexvariable}, that is, the complex coordinate transformation:
   \begin{equation}\label{simpletransform}
   \tilde r=\rho+\ri \frac {\sigma_0}{\sigma_1} (\rho-R_1),\quad {\rm for}\;\; \rho>R_1.
   \end{equation}
   Then  applying the  real compression rational mapping
   \begin{equation}\label{newcasesA}
 \rho=R_1+\sigma_1 T(r)= R_1+ \sigma_1 \dfrac{(R_2-R_1)(r-R_1)}{R_2-r},\quad
 \end{equation}
 to  \eqref{complexvariable}, we
   can derive  the circular PAL-equation \eqref{circularA}.
Moreover, the PAL-equation \eqref{circularA}   admits a unique solution, and
 \begin{equation}\label{caspA}
      U_{\rm PAL}(r,\theta)=U_{\rm B}(R_1 + \sigma_1 T(r),\theta),\quad r<R_2; \;\;\; {\rm and}\;\;\;
      U_{\rm PAL}|_{r<R_1}=U|_{r<R_1},
      \end{equation}
      where $U$ is the solution of  the original problem \eqref{extproblem2d}.
 \end{thm}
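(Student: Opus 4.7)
The plan is to recognize the map \eqref{asBeqn00} as a composition: first the complex coordinate stretching \eqref{simpletransform}, which takes $\rho\in(R_1,\infty)$ to $\tilde r = \rho+\mathrm{i}(\sigma_0/\sigma_1)(\rho-R_1)$ (this is precisely the B\'erenger stretching \eqref{complexvariable} with the constant absorbing function $\hat\sigma=\sigma_0/\sigma_1$), and then the real rational compression \eqref{newcasesA}, which bijectively maps $r\in(R_1,R_2)$ onto $\rho\in(R_1,\infty)$ with $T(R_1)=0$ and $T(r)\to\infty$ as $r\to R_2^{-}$. With this decomposition in hand, the strategy is: start from the B\'erenger problem \eqref{circularA0}, substitute $\rho=R_1+\sigma_1 T(r)$, verify that the transformed equation is exactly \eqref{circularA}, then push the boundedness condition through the bijection, and finally invoke \cite[Theorem~1]{collino1998perfectly} for existence, uniqueness, and the identification with the original solution interior to $\Gamma_{R_1}$.

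First I would carry out the chain-rule computation. Writing $\rho'(r)=\sigma_1 T'(r)$ and using $\partial_\rho=\rho'(r)^{-1}\partial_r$, the PAL coefficients are related to the B\'erenger coefficients by $\alpha=\hat\alpha\,\rho'(r)$ and $\beta=\hat\beta\,\rho/r$. Plugging these into \eqref{circularA0}, the factor $\hat\beta\rho/\hat\alpha$ transforms into $(\beta r/\alpha)\rho'(r)$; combined with the inner $\partial_\rho$ this produces $(\beta r/\alpha)\partial_r U_{\rm PAL}$ and an outer $(1/\rho)\rho'(r)^{-1}\partial_r$. Multiplying through by $\rho'(r)\rho/r$ then yields exactly \eqref{circularA}, since the source $f$ is compactly supported in $\Omega$ (where $\rho=r$, $\alpha=\beta=1$) and the modification in the layer is vacuous. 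This verifies the derivation claimed in the first part of the theorem.

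Next I would define $U_{\rm PAL}(r,\theta):=U_{\rm B}(R_1+\sigma_1 T(r),\theta)$ for $r\in(0,R_2)$ (taking $\rho=r$ on $r<R_1$). By the previous step this function satisfies \eqref{circularA}, the Dirichlet datum on $\Gamma_{\!D}$ is inherited from $U_{\rm B}$, the transmission conditions at $r=R_1$ hold because the change of variables is the identity there, and the uniform boundedness as $r\to R_2^{-}$ follows from the uniform boundedness of $U_{\rm B}$ as $\rho\to\infty$ (using that $T$ is a homeomorphism). For uniqueness, given any two PAL solutions their difference $W$ solves the homogeneous PAL equation with zero Dirichlet data and remains bounded; pulling $W$ back through $\rho=R_1+\sigma_1 T(r)$ yields a bounded solution of the homogeneous B\'erenger problem \eqref{circularA0} with $f=0$, $g=0$, which must vanish by \cite[Theorem~1]{collino1998perfectly}. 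The identity $U_{\rm PAL}|_{r<R_1}=U|_{r<R_1}$ follows immediately: on $r<R_1$ the compression is trivial, so $U_{\rm PAL}=U_{\rm B}$, and by the B\'erenger theory recalled around \eqref{BUexa2} one has $U_{\rm B}|_{r<R_1}=U|_{r<R_1}$.

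The only delicate point I anticipate is to make the uniqueness argument rigorous in an appropriate functional setting, since the PAL coefficients $\alpha,\beta$ are singular as $r\to R_2^{-}$ and ``uniform boundedness'' has to play the role of the Sommerfeld/evanescence condition at infinity after pull-back; once this is clarified (for instance by appealing to the Fourier-mode decomposition \eqref{BUexa2} and noting that each mode of $W$ pulls back to a bounded solution of the radial ODE exterior to $R_1$, forcing all modal coefficients to vanish), everything else is a direct chain-rule exercise. The remaining computations -- checking the transmission conditions at $r=R_1$ and the Dirichlet condition on $\Gamma_{\!D}$ -- are straightforward.
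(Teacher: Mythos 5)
Your proposal is correct and follows essentially the same route as the paper: you decompose the transformation \eqref{asBeqn00} into the B\'erenger stretching \eqref{simpletransform} (constant $\hat\sigma=\sigma_0/\sigma_1$) composed with the real compression \eqref{newcasesA}, verify by the chain rule that $\alpha=\hat\alpha\,d\rho/dr$, $\beta=\hat\beta\,\rho/r$ turn \eqref{circularA0} into \eqref{circularA}, and then pull uniqueness and the identity $U_{\rm PAL}|_{r<R_1}=U|_{r<R_1}$ back to \cite[Theorem~1]{collino1998perfectly} via $U_{\rm PAL}(r,\theta)=U_{\rm B}(R_1+\sigma_1T(r),\theta)$. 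Your extra care on the uniqueness pull-back (handling the boundedness condition at $r=R_2$ through the bijection, mode by mode) is a more detailed rendering of what the paper states in one line, not a different argument.
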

\begin{proof} It suffices to consider the transformation for $\rho>R_1.$ We find from \eqref{complexvariable} with $\hat \sigma=\sigma_0/\sigma_1$ that $\hat\alpha=1+\ri\sigma_0/\sigma_1$ and $\hat \beta=(\rho+\ri\sigma_0 (\rho-R_1)/\sigma_1)/\rho.$
Using the rational mapping: $\rho=R_1+\sigma_1 T(r),$ we obtain from \eqref{alphatalg} and the composite transformation
\eqref{simpletransform}-\eqref{newcasesA} (i.e., \eqref{asBeqn00})
that
\begin{equation}\label{alphabeta1}
\alpha=(\sigma_1+\ri\sigma_0)T'(r)=\hat\alpha\, \frac{d\rho}{dr},\quad \beta=\frac{R_1+(1+\ri \sigma_0)T(r)} r=\frac{\rho+\ri\sigma_0 (\rho-R_1)}{\rho}=\hat \beta \frac{\rho} r.
\end{equation}
By a simple substitution: $\frac{d}{d\rho}=\frac{dr}{d\rho}\frac{d}{dr},$ we can obtain  \eqref{alphabeta1} from  \eqref{circularA0} straightforwardly.

 The unique solvability of \eqref{circularA}  follows directly from that of \eqref{circularA0} (see \cite[Theorem 1]{collino1998perfectly}).
 Moreover, the PAL-solution is a compression of the B\'enenger solution: $U_{\rm p}(r,\theta)=U_{\rm B}(R_1+\sigma_1 T(r),\theta)$ for $r<R_2,$ so it is identical to the original solution for $r<R_1.$
 Thus, by \eqref{BUexa2}, we have
\begin{equation}\label{BUexa3}
 U_{\rm p}(\bs x)=\sum_{|m|=0}^\infty \hat a_m\, \frac{H^{(1)}_m (kS(r))}{H^{(1)}_m (kR_1)} e^{\ri m\theta},\quad R_1<r<R_2,
 \end{equation}
 where $\{\hat a_m\}$ are the Fourier coefficients of $U_{\rm p}(R_1,\theta).$
 \end{proof}

 With the above understanding, we can show that the PAL-solution  in the artificial layer
  decays exponentially. In fact, the bound is more precise than the estimate in \cite[Theorem 1]{wangyangpal17}.
 \begin{thm}\label{decayfuns} The solution of  the PAL-equation \eqref{circularA}
satisfies  that for all $r\in [R_1, R_2),$
\begin{equation}\label{PALdecay}
\int_{0}^{2\pi} |U_{\rm PAL}(r,\theta)|^2 d\theta\le  {\rm exp}\Big(\!\!-\sigma_0 k\frac{\tau}{1-\tau}\big(1-h(\tau) (1-\tau)^2\big)^{1/2} \Big) \int_{0}^{2\pi} |U_{\rm PAL}(R_1,\theta)|^2 d\theta,
\end{equation}
where
$$ h(\tau)=\frac{R_1^2}{(R_1(1-\tau)+\sigma_1d\,\tau)^2+\sigma_0^2d^2\,\tau^2}, \quad d=R_2-R_1,  \quad \tau=\frac{r-R_1}{R_2-R_1}\in [0,1). $$
 \end{thm}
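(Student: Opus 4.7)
The plan is to exploit the explicit series representation \eqref{BUexa3} of $U_{\rm PAL}$ in the annulus and reduce the theorem to a uniform-in-$m$ Hankel-function modulus estimate. First I would write, for $R_1\le r<R_2$,
\begin{equation*}
U_{\rm PAL}(r,\theta)=\sum_{|m|=0}^{\infty} \hat a_m\,\frac{H_m^{(1)}(kS(r))}{H_m^{(1)}(kR_1)}\,e^{\ri m\theta},\qquad S(r)=R_1+(\sigma_1+\ri\sigma_0)T(r),
\end{equation*}
where $\{\hat a_m\}$ are the Fourier coefficients of $U_{\rm PAL}(R_1,\theta)$ (consistent at $r=R_1$ since $S(R_1)=R_1$). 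Parseval's identity in $\theta$ then gives $\int_0^{2\pi}|U_{\rm PAL}(R_1,\theta)|^2\,d\theta = 2\pi\sum_m|\hat a_m|^2$ and the analogous weighted sum at radius $r$. Hence \eqref{PALdecay} reduces to the mode-wise estimate
\begin{equation*}
\Big|\tfrac{H_m^{(1)}(kS(r))}{H_m^{(1)}(kR_1)}\Big|^2 \le \exp\!\Big(\!-\sigma_0 k\,\tfrac{\tau}{1-\tau}\bigl(1-h(\tau)(1-\tau)^2\bigr)^{1/2}\Big),\qquad m\in\mathbb{Z}.
\end{equation*}

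The shape of the right-hand side is governed by the geometry of the complex argument. Setting $z:=kS(r)$ and $x:=kR_1$, using $T(r)=d\tau/(1-\tau)$, one has $\Im z = k\sigma_0 T(r)$ and a direct calculation gives
\begin{equation*}
\frac{x^2}{|z|^2}=\frac{R_1^2}{(R_1+\sigma_1 T(r))^2+\sigma_0^2 T(r)^2}=h(\tau)(1-\tau)^2.
\end{equation*}
Thus the target inequality fits the template of the classical complex-argument Hankel decay estimate $|H_m^{(1)}(z)/H_m^{(1)}(x)|\lesssim \exp(-c\,\Im z\,(1-x^2/|z|^2)^{1/2})$ familiar from the PML error analyses \cite{collino1998perfectly,chen2005adaptive}. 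My strategy to obtain it would be to work along the straight complex ray $\tilde r(t)=R_1+(\sigma_1+\ri\sigma_0)t$, $t\in[0,T(r)]$, by (i) parameterizing and differentiating $|H_m^{(1)}(k\tilde r(t))|^2$ in $t$; (ii) using Bessel's ODE and the Wronskian for $H_m^{(1)}, H_m^{(2)}$ to eliminate second derivatives; and (iii) invoking Nicholson's integral for $|J_m|^2+|Y_m|^2$ to produce a first-order differential inequality whose damping coefficient is essentially $-2k\sigma_0(1-R_1^2/|\tilde r(t)|^2)^{1/2}$. Gronwall in $t$ on $[0,T(r)]$ then yields the claimed exponent, since $R_1/|\tilde r(T(r))|^2=h(\tau)(1-\tau)^2$ controls the integrand from above.

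The main obstacle will be uniformity in the order $m$, and particularly the evanescent regime $m>k|\tilde r(t)|$, where $|H_m^{(1)}|$ grows rapidly in $m$ at small arguments and the oscillatory-mode argument does not apply directly. I would handle this by switching to the modified-Bessel representation $H_m^{(1)}(\ri\rho)\propto K_m(\rho)$ and exploiting the monotone decrease of $\rho\mapsto K_m(\rho)$ on $(0,\infty)$ to extract an even stronger purely-exponential decay that dominates the target rate. A turning-point matching at $m\approx k|\tilde r(t)|$ patches the two regimes and produces the mode-wise bound uniformly in $m$, completing the proof via the Parseval reduction above.
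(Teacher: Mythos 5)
Your overall reduction is exactly the one the paper uses: represent $U_{\rm PAL}$ in the layer by the stretched Hankel series \eqref{BUexa3}, apply Parseval in $\theta$, and reduce \eqref{PALdecay} to a mode-wise bound on $|H_m^{(1)}(kS(r))/H_m^{(1)}(kR_1)|$, with the correct identifications $\Im\{kS(r)\}=k\sigma_0 T(r)$ and $R_1^2/|S(r)|^2=h(\tau)(1-\tau)^2$. The difference is that the paper obtains the mode-wise bound in two lines by citing the uniform Hankel estimate \eqref{Chenforma} of Chen and Liu (\cite[Lemma 2.2]{chen2005adaptive}), which is valid for \emph{every} real order $\nu$, so the uniformity in $m$ that you single out as the main obstacle comes for free; you instead propose to re-derive that estimate, and this is where your sketch has genuine gaps.

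Concretely: (i) your ODE/Gronwall route would give a bound of the form $\exp\big(\!-c\,k\sigma_0\int_0^{T(r)}(1-R_1^2/|\tilde r(t)|^2)^{1/2}\,dt\big)$, and since $(1-R_1^2/|\tilde r(t)|^2)^{1/2}$ is \emph{increasing} along the ray, bounding the integrand from above by its endpoint value (as you propose) bounds the integral from above and hence bounds the exponential from \emph{below}; the endpoint-form exponent in \eqref{PALdecay} therefore does not follow from the step as written — the monotonicity is used in the wrong direction, and closing this would essentially amount to reproving the Chen--Liu lemma anyway. (ii) Your treatment of the evanescent regime relies on $H_m^{(1)}(\ri\rho)\propto K_m(\rho)$, which holds only for purely imaginary argument, whereas along the ray the argument $kS(r)=k(R_1+\sigma_1 T)+\ri k\sigma_0 T$ has real part at least $kR_1>0$; so the monotone-$K_m$ argument and the ``turning-point matching'' are not actually available in the form stated and are left unexecuted. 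The fix is simply to invoke \eqref{Chenforma} with $z=kS(r)$, $\Theta=kR_1$ (using $|S(r)|\ge R_1$ and $\Im\{S(r)\}=\sigma_0T(r)\ge 0$), after which your Parseval reduction and your algebraic identity for $h(\tau)(1-\tau)^2$ complete the proof exactly as in the paper.
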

 \begin{proof}
 We now show that the PAL-solution decays exponentially  in the PAL layer.  For this purpose, we recall the uniform estimate of Hankel functions first derived in  \cite[Lemma 2.2]{chen2005adaptive}:  {\em  For any complex $z$ with $\Re\{z\},  \Im\{z\}\ge 0,$ and for any real $\Theta$ such that $0<\Theta\le |z|,$ we have
\begin{equation}\label{Chenforma}
|H_\nu^{(1)}(z)|\le e^{-\Im\{z\}\big(1-\frac{\Theta^2}{|z|^2}\big)^{1/2}} |H_\nu^{(1)}(\Theta)|,
\end{equation}
which is valid for for any real order $\nu.$} Note that $S(R_1)=R_1,$ and  for $r\in [R_1, R_2),$
$$|S(r)|^2= (R_1+\sigma_1 T(r))^2+\sigma_0 T^2(r) \ge R_1^2\,, \quad   \Im\{S(r)\}=\sigma_0 T(r).$$
Thus, we obtain  from \eqref{Chenforma} with $z=kS(r)$ and $\Theta=kR_1$ that for $r \in [R_1,R_2),$
\begin{equation}\label{rationbnd}
\begin{split}
\max_{|m|\ge 0} \bigg|\frac{H_m^{(1)}(kS(r))}{H_m^{(1)}(kR_1)}\bigg|& \le  {\rm exp}\Big\{{-k \Im\{S(r)\} \Big(1-\frac{R_1^2}{|S(r)|^2}\Big)^{1/2}}\Big\} \\
&={\rm exp}\Big(\!\!-\sigma_0 d k\frac{\tau}{1-\tau}\big(1-h(\tau) (1-\tau)^2\big)^{1/2}\Big),
\end{split}
\end{equation}
where the last step follows from direct calculation.
\end{proof}

It is seen that the PAL-equation leads to  the exact B\'ernenger solution, which decays  exponentially  to zero  at a rate: $O(e^{-\sigma_0 d k /(1-\tau)})$ as $r\to R_2$ (i.e.,  as $\tau\to 1).$ 
 However, there are two numerical issues  to be addressed.
\begin{itemize}
\item[(i)]  The coefficients of the PAL-equation \eqref{circularA01} are singular,  which are induced by  the compression rational mapping:  $\rho=R_1+\sigma_1 T(r)$. In fact, we have
\begin{equation}\label{alphabeta-singular}
\alpha(r)\sim \frac {C_1} {(R_2-r)^2}, \quad \beta(r) \sim \frac {C_2} {R_2-r}.
\end{equation}
However, the underlying PAL-solution is not singular at $r=R_2,$ as it  decays exponentially in the artificial layer. 
\item[(ii)] Observe from \eqref{HapxA1} that the real part of the transformation: $\rho=\Re\{S(r)\}=R_1+\sigma_1 T(r)$  may increase the oscillation near the inner boundary $r=R_1$.
\end{itemize}
To resolve these two issues,  we  follow \cite{wangyangpal17} by using  a substitution of the unknown: $U_{\rm PAL}=w(r) V_{\rm p}$ with  a suitable factor $w(r),$ so that  $V_p$ to be approximated  is well-behaved. The choice of $w$ is actually spawned by  the asymptotic behaviour  due to   \eqref{HapxA1}:
\begin{equation}\label{ratioS}
\frac{H^{(1)}_m (kS(r))}{H^{(1)}_m (kR_1)}\sim \sqrt{\frac{R_1}{S(r)}}\,e^{\ri k (\Re \{S(r)\}-R_1)} e^{-k\Im\{S(r)\}} =
\sqrt{\frac{R_1}{S(r)}}\,e^{\ri k \sigma_1 T(r)} e^{-k\Im\{S(r)\}}.
\end{equation}
It implies the oscillatory part  of $U_{\rm PAL}$ can be extracted explicitly:  $U_{\rm PAL}(r,\theta)= e^{\ri k \sigma_1 T(r)} \widetilde U_{\rm PAL}(r,\theta),$ where  $\widetilde U_{\rm p}(r,\theta)$ is expected to have no essential oscillation.
Then  the second issue can be resolved effectively. In regards to  the first issue, it is seen from Theorem \ref{abstration} that the singular coefficients are induced by the real transformation: $\rho=R_1+\sigma_1 T(r).$ In fact,  similar singular mapping techniques were used to map  elliptic problems with rapid decaying solutions in unbounded domains to problems with singular coefficients in bounded domains (see, e.g.,
\cite{Boyd01,Guo.01,She.W09,Shen.TWBook01} and the references therein), so
 one can also consider a suitable variational  formulation weighted with  $\omega(r)=R_2-r$.  Unfortunately, the involved variational formulation is  non-symmetric and less efficient  in  computation.
As shown in \cite{wangyangpal17}, we can absorb the singularity and diminish the oscillation by the substitution:
\begin{equation}\label{twotechni}
U_{\rm p}(r,\theta)=w(r) V_{\rm p}(r,\theta), \quad r<R_2,
\end{equation}
where
\begin{equation}\label{wdefn}
w(r)=\begin{cases}
1,\quad &{\rm for}\;\; r<R_1,\\[2pt]
\big(R_1/(R_1+\sigma_1 T(r))\big)^{3/2} e^{\ri k \sigma_1 T(r)},   &{\rm for}\;\; R_1\le r<R_2,
\end{cases}
\end{equation}
which leads to a well-behaved and non-oscillatory field $V_{\rm p}$ in the absorbing layer.

It is important to remark that  in numerical discretisation, we  can build the substitution in the basis functions. More precisely, we  approximate $U_{\rm PAL}$ by the nonstandard basis $\{w \phi_k\}$ (where $\{\phi_k\}$ are usual spectral or finite element basis functions) to avoid transforming the PAL-equation into a much complicated problem in $V_{\rm p}.$   In the above, we just show the idea, but refer to   \cite{wangyangpal17}  and the more general case in Section \ref{lm:Sect4} for the detailed implementation.

\section{The PAL technique for star-shaped domain truncation}\label{lm:Sect4}
\setcounter{equation}{0}
\setcounter{lmm}{0}
\setcounter{thm}{0}

With the understanding of the circular case, we are now in a position  to  construct the PAL technique for the general star-shaped domain truncation with the setting 
described in Subsection \ref{mainPAL-Eqn}. We start with constructing the PAL-equation based on the complex compression coordinate transformation.  Then we show the outstanding performance of this technique. 

\subsection{Design of the PAL-equation} The first step is to extend the complex compression coordinate transformation
\eqref{asBeqn00}  to the general case: 
\begin{equation}\label{asBeqn}
\begin{split}
& \tilde r=S(r, \theta)= \begin{cases}
r, & {\rm in}\;\; \Omega,\\[2pt]
R_1(\theta)+\sigma_1\, T(r,\theta)+\ri\, \sigma_0 \, T(r,\theta),\; &
{\rm in}\;\;  \Omega_{\varrho}^{\rm PAL},
\end{cases}\quad \tilde \theta=\theta, \;\;\; \theta\in [0,2\pi),
\end{split}
\end{equation}
where
\begin{equation}\label{TtransformA}
T(r,\theta)=\dfrac{(R_2-R_1)(r-R_1)}{R_2-r}= (R_2-R_1)^2 \int_{R_1\!}^r \frac{ {\rm d}t} {(R_2-t)^2} ,\quad r\in (R_1,R_2).
\end{equation}
Different from the circular case, $R_1$ and $R_2$ are now $\theta$-dependent.
Indeed, we notice   from \eqref{omegaS} and  \eqref{PAL-Omega} that  $R_1=R_1(\theta)$  defines the inner boundary of the artificial layer $\Omega_{\varrho}^{\rm PAL},$ whose outer boundary is given by  $R_2=\varrho R_1(\theta)$.
For any fixed $\theta\in [0,2\pi),$ $\Re\{\tilde r\}=R_1(\theta)+\sigma_1\, T(r,\theta)$
 compresses the infinite ``ray":  $R_1\to \infty$ into a ``line segment":  $R_1\to R_2$ in the radial direction.
Accordingly,   for all  $\theta\in [0,2\pi),$  it compresses
the open space exterior to the star-shaped domain $\{r<R_2\}$ to the artificial layer $\Omega_{\varrho}^{\rm PAL}$
 (see  Figure \ref{geometry2d} for an illustration).

\begin{rem}\label{intpS}{\em Based on the notion of transformation optics \cite{Pendry.2006},
the use of a real singular coordinate transformation to expand the origin into a polygonal or star-shaped domain to design invisibility  cloaks, is discussed in e.g.,  \cite{yang2016seamless,Yang.LH18}. In contrast, the real part of
the transformation \eqref{asBeqn}: $R_1(\theta)+\sigma_1\, T(r,\theta)$ compresses the infinity to the finite
boundary $r=R_2(\theta)$, so the cloaking  is an inside-out or inverse cloaking as with \cite{zharova2008inside}.  However, to the best of our knowledge, this type of cloaking has not been studied in literature.}
\end{rem}



In order to derive the PAL-equation in Cartesian coordinates, it is necessary to commute between  different coordinates in the course as shown in the diagram:
\medskip
\begin{equation}\label{polartranA}
{\small \ovalbox{Cartesian:~$(\tilde x,\tilde y)$} \xleftarrow{\makebox[0.3cm]{}}\hspace*{-2pt}\xrightarrow{\makebox[0.3cm]{}} \ovalbox{Polar: $(\tilde r,\tilde \theta)$} \xrightarrow{\makebox[1.8cm]{Transform}} \ovalbox{Polar: $(r, \theta)$} \xleftarrow{\makebox[0.3cm]{}}\hspace*{-2pt}\xrightarrow{\makebox[0.3cm]{}}
\ovalbox{PAL: $(x,y)$}}
\end{equation}
\vskip 5pt
\noindent In what follows, the differential  operators ``$\nabla, \nabla\cdot$"  are in $(x,y)$-coordinates, but  the  coefficient matrix $\bs C$ and the reflective index $n$ are expressed  in $(r,\theta)$-coordinates.
For simplicity, we  denote the partial derivatives by $S_r=\partial_r S $ and $S_\theta=\partial_\theta S,$ etc..

The most important step  is to obtain   the transformed  Helmholtz operator as follows, whose derivation is given in Appendix \ref{AppendixB}.
\begin{lmm}\label{thm:PAL-eqn} Using the  transformation  \eqref{asBeqn},  the  Helmholtz  operator
\begin{equation}\label{helmhotlz}
 \mathcal{\tilde H}[\tilde U]:=\Delta \tilde U+k^2 \tilde U,
\end{equation}
in  $(\tilde x,\tilde y)$-coordinates, can be transformed into  
\begin{align}
{\mathcal  H}[U_{\rm p}]:=
\frac {1} {n}\big\{\nabla \cdot \big({\bs C}\, \nabla  U_{\rm p} \big)+k^2\, n\,   U_{\rm p}\big\},
 \label{PALeqnA}
\end{align}
where $U_{\rm p}(x,y)=\tilde U(\tilde x,\tilde y),$  ${\bs C}=(C_{ij})$ is a two-by-two symmetric matrix and $n$ is the reflective index given by
\begin{equation}\label{Cn2dpolarform}
 {\bs C}={\bs T}\,{\bs B}\,{\bs T}^t,\quad n=\frac{SS_r}{r},
\end{equation}
and
\begin{equation}\label{rotationmat}
  {\bs B}=\begin{pmatrix}
\dfrac{S}{r S_r}\Big(1+\dfrac{S_{\theta}^2}{S^2} \Big)\;\; &  -\dfrac{S_{\theta}}{S}\\[8pt]
-\dfrac{S_{\theta}}{S} & \dfrac{r S_r}{S}
\end{pmatrix},\qquad  {\bs T}=\begin{pmatrix}
\cos \theta & -\sin \theta\\[4pt]
\sin \theta  &  \cos \theta
\end{pmatrix}.
\end{equation}
\end{lmm}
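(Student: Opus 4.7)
The plan is to derive the transformed operator by moving through the chain of coordinate systems indicated in \eqref{polartranA}. First I would write the original operator $\mathcal{\tilde H}[\tilde U] = \Delta_{\tilde x \tilde y}\tilde U + k^2 \tilde U$ in the polar coordinates $(\tilde r,\tilde\theta)$, using the standard Laplacian
\begin{equation*}
\Delta \tilde U = \frac{1}{\tilde r}\partial_{\tilde r}\bigl(\tilde r\, \partial_{\tilde r}\tilde U\bigr) + \frac{1}{\tilde r^2}\partial_{\tilde\theta}^2 \tilde U .
\end{equation*}
Then apply the change of variables $\tilde r = S(r,\theta)$, $\tilde\theta = \theta$. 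Since the Jacobian of $(\tilde r,\tilde\theta)\mapsto(r,\theta)$ has determinant $S_r$, the chain rule gives $\partial_{\tilde r} = S_r^{-1}\partial_r$ and $\partial_{\tilde\theta} = \partial_\theta - (S_\theta/S_r)\partial_r$. Substituting these into the Laplacian and multiplying by $\tilde r = S$ to assemble a divergence form yields, after routine algebra,
\begin{equation*}
S\bigl(\Delta \tilde U + k^2 \tilde U\bigr) = \partial_r\!\Bigl(\tfrac{S}{S_r}\bigl(1 + \tfrac{S_\theta^2}{S^2}\bigr)\partial_r U - \tfrac{S_\theta}{S}\,\partial_\theta U\Bigr) + \partial_\theta\!\Bigl(\tfrac{rS_r}{S}\cdot\tfrac{1}{r}\partial_\theta U - \tfrac{S_\theta}{S}\,\partial_r U\Bigr) + k^2 S S_r U ,
\end{equation*}
which is exactly $(r/n)\bigl[\nabla_{(r,\theta)}\!\cdot(\bs B\,\nabla_{(r,\theta)}U) + k^2 n\, U\bigr]/?$ after dividing by $r$; in short, the polar-form divergence operator has matrix $\bs B$ and reflective index $n = SS_r/r$ as claimed.

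The second half of the argument passes from $(r,\theta)$ to the Cartesian computational coordinates $(x,y)$. The crucial identity is the rotation
\begin{equation*}
\bigl(\partial_r U,\; r^{-1}\partial_\theta U\bigr)^{t} = \bs T^{t}\,\nabla_{x,y} U,\qquad \bs T = \begin{pmatrix}\cos\theta & -\sin\theta \\ \sin\theta & \cos\theta\end{pmatrix},
\end{equation*}
which is orthogonal, so $\bs T^{-1}=\bs T^{t}$ and $\det \bs T = 1$. Writing the flux $\bs B\,\nabla_{(r,\theta)}U$ in terms of $\nabla_{x,y}U$ and then pulling the divergence back to Cartesian coordinates introduces the conjugate matrix $\bs T\bs B\bs T^{t}$. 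Since the area element transforms trivially ($dx\,dy = r\,dr\,d\theta$) and cancels against the $r$-weight already built into $n = SS_r/r$, one obtains the Cartesian divergence form
\begin{equation*}
\tfrac{1}{n}\bigl\{\nabla\!\cdot(\bs C\,\nabla U_{\rm p}) + k^2 n\, U_{\rm p}\bigr\},\qquad \bs C = \bs T\bs B\bs T^{t},
\end{equation*}
which is \eqref{PALeqnA}.

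I would prefer to carry out the computation through the associated sesquilinear form rather than symbol-by-symbol on the strong operator: testing against $\overline{\tilde\Psi}$, changing variables in the volume integral with Jacobian $S_r$, and using $\tilde r\,d\tilde r\,d\tilde\theta = S S_r\,dr\,d\theta$ makes the appearance of the factor $n = SS_r/r$ transparent and identifies $\bs B$ as the Gram matrix of $(\partial_r,r^{-1}\partial_\theta)$ in the pushed-forward metric. The rotation $\bs T$ then appears simply because the Cartesian gradient is the rotation of the polar gradient.

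The main obstacle I anticipate is bookkeeping the off-diagonal contributions generated by $S_\theta \neq 0$, which are absent in the circular case \eqref{asBeqn00}; they must combine cleanly so that (i) the matrix $\bs B$ remains symmetric, and (ii) no lower-order first-derivative terms are left outside the divergence. Verifying symmetry of $\bs B$ and checking that the cross terms $\partial_r\partial_\theta U$ and $\partial_\theta\partial_r U$ produced by the two mixed entries of $\bs B$ cancel modulo a symmetric distribution is the one place where care is needed; after that, the identification of $\bs C$ and $n$ with the formulas in \eqref{Cn2dpolarform}--\eqref{rotationmat} is immediate.
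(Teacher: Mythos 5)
Your route is sound and genuinely different from the paper's. The paper does not touch the polar Laplacian at all: it invokes the standard transformation formula for a nonsingular Cartesian-to-Cartesian change of variables, namely that $\Delta+k^2$ becomes $\tfrac1n\{\nabla\cdot(\bs C\nabla\,\cdot\,)+k^2 n\,\cdot\,\}$ with $\bs C=\bs J\bs J^t/\det(\bs J)$ and $n=1/\det(\bs J)$, and then spends the whole proof computing $\bs J$ and $\det(\bs J)$ by composing the three Jacobians in the chain Cartesian $\to$ polar $(\tilde r,\tilde\theta)$ $\to$ polar $(r,\theta)$ $\to$ Cartesian; the rotation $\bs T$ and the factorization $\bs C=\bs T\bs B\bs T^t$, $n=SS_r/r$ drop out of that matrix algebra. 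You instead work at the level of the polar Laplacian (or, better, the sesquilinear form): chain rule $\partial_{\tilde r}=S_r^{-1}\partial_r$, $\partial_{\tilde\theta}=\partial_\theta-(S_\theta/S_r)\partial_r$, reassemble a divergence form, then rotate to Cartesian coordinates. This buys a more self-contained derivation (no appeal to the textbook formula) and, in the weak-form version you sketch, makes the origin of $n=SS_r/r$ completely transparent via $\tilde r\,d\tilde r\,d\tilde\theta=SS_r\,dr\,d\theta=n\,dx\,dy$ while avoiding the frame subtlety of conjugating a divergence by the $\theta$-dependent rotation $\bs T$; the paper's route buys generality (the formulas \eqref{RT2rt} hold for any $\tilde r=R(r,\theta)$, $\tilde\theta=\Theta(r,\theta)$) and shorter algebra, since symmetry of $\bs B$ and the absence of stray first-order terms are automatic from $\bs C=\bs J\bs J^t/\det(\bs J)$.

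Two points to tighten in the strong-form half. First, the prefactor in your displayed identity is off: multiplying the polar Laplacian by $S$ alone does not produce the divergence form; the correct identity is
\begin{equation*}
S S_r\bigl(\Delta\tilde U+k^2\tilde U\bigr)=\partial_r\Bigl(\tfrac{S}{S_r}\bigl(1+\tfrac{S_\theta^2}{S^2}\bigr)\partial_r U-\tfrac{S_\theta}{S}\partial_\theta U\Bigr)+\partial_\theta\Bigl(\tfrac{S_r}{S}\partial_\theta U-\tfrac{S_\theta}{S}\partial_r U\Bigr)+k^2 S S_r\,U,
\end{equation*}
i.e.\ you must multiply by $SS_r=rn$, and then divide by $r$ to read off $\tfrac1n\{\nabla\cdot(\bs C\nabla U)+k^2nU\}$ (your ``$(r/n)[\cdots]/?$'' reflects this unresolved factor). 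Second, the ``cross terms combine cleanly'' step is precisely the identity $\partial_\theta(S_r/S)=\partial_r(S_\theta/S)$, i.e.\ equality of mixed partials $S_{r\theta}=S_{\theta r}$; stating this explicitly closes the only nontrivial cancellation in your derivation (and it is bypassed entirely if you commit to the sesquilinear-form computation).
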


\medskip

With the aid of Lemma \ref{thm:PAL-eqn},  we directly  apply  \eqref{asBeqn}-\eqref{TtransformA} to the exterior Helmholtz problem and obtain the PAL-equation for the general star-shaped truncation of the unbounded domain. 
\begin{theorem}\label{mainthmA} The PAL-equation associated with  star-shaped domain truncation
of the original Helmholtz  problem \eqref{extproblem2d} takes the form
\begin{equation}\label{PALeqn}
\begin{cases}
\nabla \cdot \big({\bs C}\, \nabla  U_{\rm PAL} \big)+k^2\,n\,  U_{\rm PAL}=f \;\;\;\;  {\rm in}\;\;  \Omega\cup\Omega_\varrho^{\rm PAL}, \\[2pt] 
U_{\rm PAL}=g\;\;\;  {\rm on}\;\; \Gamma_{\!D}, \\[2pt]  
 |U_{\rm PAL}| \;\; \text{is uniformly bounded  as $r\to R_2$}\,, 
\end{cases}
\end{equation}
where $\bs C=\bs I, n=1$ in $\Omega,$ and $\bs C, n$ in $\Omega_\varrho^{\rm PAL}$  are the same as in Lemma {\rm \ref{thm:PAL-eqn}} with the transformation $S$ given by \eqref{asBeqn}-\eqref{TtransformA}. Here,  we impose the usual transmission conditions across any interface $\Gamma_{\!R_1}: r=R_1.$
\end{theorem}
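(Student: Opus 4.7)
The plan is to derive \eqref{PALeqn} by applying the complex compression coordinate transformation \eqref{asBeqn}--\eqref{TtransformA} separately in the physical region $\Omega$ and in the absorbing layer $\Omega_\varrho^{\rm PAL}$, and then to check that the original boundary, radiation, and transmission conditions translate into those stated in \eqref{PALeqn}. Since Lemma \ref{thm:PAL-eqn} already identifies the transformed Helmholtz operator in an arbitrary region where $\tilde r = S(r,\theta)$, $\tilde\theta = \theta$, most of the bulk calculation is subsumed, and what remains is to specialise the formulas and glue the two pieces together.

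First I would treat the physical region $\Omega$. Since $S(r,\theta)=r$ and $\tilde \theta = \theta$ there, one has $S_r\equiv 1$ and $S_\theta\equiv 0$. Substituting into \eqref{rotationmat} gives $\bs B=\bs I$, and because $\bs T$ is a rotation matrix, $\bs C=\bs T\bs I\,\bs T^t = \bs I$. The reflective index reduces to $n = SS_r/r = 1$. Hence \eqref{PALeqnA} collapses to $\Delta U_{\rm PAL}+k^2 U_{\rm PAL}$, recovering \eqref{curlformA}. The Dirichlet datum on $\Gamma_D$ is preserved automatically because $D\subset\Omega$ lies entirely in the identity region of the transformation.

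Next I would handle the layer $\Omega_\varrho^{\rm PAL}$. Here $S(r,\theta)=R_1(\theta)+(\sigma_1+\ri\sigma_0)T(r,\theta)$ with $T$ given by \eqref{TtransformA}. Applying Lemma \ref{thm:PAL-eqn} directly produces the same conservative form $\nabla\cdot(\bs C\nabla U_{\rm PAL})+k^2 n\, U_{\rm PAL}=f$, where $\bs C$ and $n$ are now built from this explicit $S$ via \eqref{Cn2dpolarform}--\eqref{rotationmat}. Because the support of $f$ is enclosed by $B\subset\Omega$, the right-hand side is unchanged across the two regions. The transmission conditions on $\Gamma_{R_1}$ follow in the usual weak sense from the conservative form of the operator: writing \eqref{PALeqn} as $\nabla\cdot(\bs C\nabla U_{\rm PAL})+k^2 n U_{\rm PAL}=f$ against a test function and integrating by parts, continuity of $U_{\rm PAL}$ and of the conormal flux $(\bs C\nabla U_{\rm PAL})\cdot \bs \nu$ across $\Gamma_{R_1}$ arises naturally; at $\Gamma_{R_1}$ we have $S|_{\Gamma_{R_1}}=R_1(\theta)$ and $T=0$, so the coefficients match continuously from $\bs I$ on the $\Omega$-side.

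The subtlest step, and the one I expect to be the main obstacle, is replacing the Sommerfeld radiation condition \eqref{silvmulA} by the uniform boundedness of $|U_{\rm PAL}|$ as $r\to R_2(\theta)$. For the circular case this was justified in Theorem \ref{abstration} by interpreting $S$ as a real compression of the complex-stretched B\'erenger coordinate and invoking \eqref{BUexa2}--\eqref{HapxA1} to obtain exponential decay. The same strategy should apply ray by ray: for each fixed $\theta$, the radial map $r\mapsto R_1(\theta)+(\sigma_1+\ri\sigma_0)T(r,\theta)$ has the form of a complex-stretched compression of $(R_1,\infty)$ with $\Im\{\tilde r\}\to\infty$ as $r\to R_2(\theta)$. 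Combined with the asymptotic \eqref{HapxA1} of outgoing modes expressed in a local modal expansion outside a ball enclosing $\bar D\cup\mathrm{supp}(f)$, this forces any admissible $U_{\rm PAL}$ to decay at least as $\exp(-k\sigma_0 T(r,\theta))$, so uniform boundedness singles out the unique physical solution and is equivalent to the Sommerfeld condition after the transformation. Writing this carefully for a general star-shaped $R_1(\theta)$ requires a little care because the transformation is anisotropic (the coefficient $S_\theta$ is nontrivial), but once the exterior solution is expanded in the $(\tilde r,\theta)$ variables the argument parallels that of Theorem \ref{abstration} and closes the proof.
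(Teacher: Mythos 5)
Your plan follows essentially the same route as the paper: the equation in \eqref{PALeqn} is obtained by specialising Lemma \ref{thm:PAL-eqn} to the transformation \eqref{asBeqn}--\eqref{TtransformA} (which is the identity in $\Omega$, so that $\bs C=\bs I$, $n=1$ there), and the boundedness condition as $r\to R_2$ is justified, exactly as in the discussion following the theorem and in Theorem \ref{abstration}, by viewing \eqref{asBeqn} as the composition of the B\'erenger-type complex stretching \eqref{complexvariableB} with the real compression $\rho=R_1+\sigma_1 T(r,\theta)$ and invoking the Hankel asymptotics for the (admittedly formal, for non-circular $R_1(\theta)$) modal expansion \eqref{BUexass}; the paper is no more rigorous on this last point than you are.

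One aside in your argument is incorrect, although it does not break the structure: the coefficients do \emph{not} match continuously across $\Gamma_{\!R_1}$. At $r=R_1^{+}$ one has $T=0$ but $T_r=1$ and $T_\theta=-R_1'$, hence $S_r=\sigma_1+\ri\sigma_0\neq 1$, $S_\theta=(1-\sigma_1-\ri\sigma_0)R_1'\neq 0$ and $n=SS_r/r=\sigma_1+\ri\sigma_0\neq 1$, so $\bs B\neq \bs I$ and both $\bs C$ and $n$ jump across the interface (they coincide with the $\Omega$-side values only in the trivial case $\sigma_1=1$, $\sigma_0=0$). This jump is precisely why the nontrivial transmission conditions --- continuity of $U_{\rm PAL}$ and of the conormal flux $(\bs C\nabla U_{\rm PAL})\cdot\bs \nu$, the star-shaped analogue of \eqref{tranmissionGuide} --- must be imposed, as the theorem states; your weak-formulation derivation of them is the right justification, whereas the claimed coefficient continuity would make them vacuous.
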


As an extension of  the circular case,  the asymptotic boundary condition at $r=R_2$ is obtained from the transformed Sommerfeld radiation condition and rapid decaying  of $U_{\rm p}$ near the outer boundary of $\Omega_{\varrho}^{\rm PAL}.$  
Indeed, for any fixed $\theta\in [0,2\pi),$ we can formally express the solution  of \eqref{extproblem2d}  as
\begin{equation}\label{BUexass}
 U(r,\theta)=\sum_{|m|=0}^\infty \tilde a_m\, \frac{H^{(1)}_m (kr)}{H^{(1)}_m (kR_1)} e^{\ri m\theta}, \quad  r> R_1,
 \end{equation}
 where  $\{\tilde a_m\}$ are determined by  $U$ at the circle $r=R_1.$ Then, we extend  \eqref{simpletransform} directly to the $\theta$-dependent situation:
  $\tilde r=\rho$ for $\rho<R_1,$ and
   \begin{equation}\label{complexvariableB}
\tilde r=\tilde r(\rho,\theta)=
\rho+\ri \frac{\sigma_1}{\sigma_0} (\rho-R_1)\quad\;\; {\rm for}\;\;\;  \rho >R_1,
\end{equation}
Then we can apply Lemma \ref{thm:PAL-eqn}  to derive the Benenger-type  problem in the unbounded domain  like \eqref{circularA0}.
Its solution for $\rho>R_1$  can be obtained from  complex stretching  of \eqref{BUexass}:
\begin{equation}\label{BUexassA}
 U(\rho,\theta)=\sum_{|m|=0}^\infty \tilde a_m\, \frac{H^{(1)}_m (k\tilde r)}{H^{(1)}_m (kR_1)} e^{\ri m\theta}, \quad \rho>R_1.
 \end{equation}
   Note that the transformation \eqref{asBeqn} is a composition of  \eqref{complexvariableB} and the real compression transformation: $\rho=R_1+\sigma_1 T(r,\theta)$.  Thus,  the PAL-equation in Theorem \ref{mainthmA} turns out to be a real compression of the Benenger-type  problem.

\subsection{Substitution and implementation}\label{mainPAL-Sub}
%
A key to  success of the PAL technique is to make a substitution of the unknown that can deal with the singular coefficients at $r=R_2$, and diminish the oscillation near $r=R_1$.  
To fix the idea,  we  assume that $g=0$, and
define the space
\begin{equation}\label{H10Gamma}
H^1_{0,\Gamma}(\Omega_{\rm c})=\big\{\phi\in H^1(\Omega_{\rm c}): \phi\,|_{\Gamma}=0\big\},\quad \Omega_{\rm c}:=\Omega\cup \Omega_{\varrho}^{\rm PAL},  \;\;\;  \Gamma:=\partial D.
\end{equation}
A weak form of \eqref{PALeqn} is to find
$u=w v$ with $v\in H^1_{0,\Gamma}(\Omega_{\rm c})$
such that
\begin{equation}\label{sesq2d0}
{\mathscr B}(u,\psi)=(\bs C\, \nabla u, \nabla \psi)_{\Omega_{\rm c}} -k^2(n\, u,\psi)_{\Omega_{\rm c}}=(f,\psi)_{\Omega_{\rm c}},
\end{equation}
for all $\psi=w \,\phi$ with $\phi\in H^1_{0,\Gamma}(\Omega_{\rm c}),$ where
\begin{equation}\label{wcase20}
w\,|_{\Omega}=1,\quad w\,|_{\Omega_{\varrho}^{\rm PAL}}= t^{3/2}\, e^{\ri k \sigma_1 T(r,\theta)},\quad t:=\frac{R_1} {\Re\{S\}}=\frac{R_1}{R_1+\sigma_1 T}.
\end{equation}
Note that in view of \eqref{ratioS} and \eqref{BUexassA}, we  extract the above  oscillatory component, together with the singular factor $t^{3/2},$
to form $w.$ 
As we shall see later on,  the power $3/2$  is the smallest to
absorb all a singular coefficients. On the other hand,
in numerical approximation, we approximate $v$ by standard spectral-element and finite element methods,
so it is necessary to compute the associated ``stiffness'' matrix: ${\mathscr B}(wv,w\phi)$  with $v,\phi$ being in the solution and test function spaces.

We next provide the detailed representation of the transformed
sesquilinear form for the convenience of both the computation and also the analysis of  the problem in $v.$   For clarity, we reformulate \eqref{sesq2d0} as:  find $v\in H^1_{0,\Gamma}(\Omega_{\rm c})$ (and set $u=w\,v$) such that
\begin{equation}\label{sesq2d3}
\breve {\mathscr B}(v,\phi):={\mathscr B}(wv,w\phi)=(\bs C\, \nabla (w v), \nabla (w\,\phi))_{\Omega_{\rm c}} -k^2(n\, |w|^2v, \phi)_{\Omega_{\rm c}}=(f, w\,\phi)_{\Omega_{\rm c}},
\end{equation}
for all $\phi\in H^1_{0,\Gamma}(\Omega_{\rm c}).$



The following formulation holds for general differentiable $w,$ which will be specified later for clarity of presentation.  
\begin{lmm}\label{thm:BVaPhi}  The sesquilinear form $\breve {\mathscr B}(v,\phi)$ in \eqref{sesq2d3} can be rewritten as
\begin{equation}\label{sesquilinearB}
\begin{split}
\breve {\mathscr B}(v,\phi)&=
\big(|w|^2\bs B \breve \nabla v, \breve \nabla \phi \big)_{\Omega_{\rm c}} + \big(w \bs B \breve \nabla w^{*}\cdot \breve \nabla  v,    \phi \big)_{\Omega_{\rm c}}  +\big(w^{*}\bs B \breve \nabla w\,  v,   \breve \nabla \phi \big)_{\Omega_{\rm c}} +\big(\breve nv, \phi\big)_{\Omega_{\rm c}}\\
&=\big(\breve{\bs B} \breve \nabla v, \breve \nabla \phi \big)_{\Omega_{\rm c}} + \big(\bs p\cdot \breve \nabla  v,    \phi \big)_{\Omega_{\rm c}}  +\big(  v,  \bs q^*\cdot  \breve \nabla \phi \big)_{\Omega_{\rm c}} +\big(\breve n\, v, \phi\big)_{\Omega_{\rm c}}, 
\end{split}
\end{equation}
where  $\breve \nabla = (\partial_r, \partial_{\theta}/r)^t, $ and
\begin{equation}\label{tilden}
\breve{\bs B}:=|w|^2 \bs B,\quad \bs p:=w \bs B \breve \nabla w^{*},\quad
\bs q:=w^{*}\bs B \breve \nabla w,\quad   \breve n:=(\breve \nabla w^*)^t  \bs B\, \breve \nabla w -k^2 \, |w|^2\, n,
\end{equation}
with $\bs B$ and $n$ defined in \eqref{rotationmat}. In \eqref{sesquilinearB}, $\bs B=\breve{\bs B}=\bs I, \bs p=\bs q=\bs 0$ and
$n=-k^2$  in  $\Omega.$
\end{lmm}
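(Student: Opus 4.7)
The plan is to rewrite $\breve{\mathscr B}(v,\phi) = {\mathscr B}(wv, w\phi)$ in three stages: first I will convert the Cartesian gradient pairing appearing in ${\mathscr B}$ into a polar-component one, then I will apply the product rule to the substitutions $u = wv$ and $\psi = w\phi$, and finally I will collect the resulting scalar integrands and match them against the coefficients in \eqref{tilden}. The key structural fact driving the first stage is the chain-rule identity $\nabla f = \bs T\,\breve\nabla f$ for every scalar field $f$, where $\bs T$ is the rotation matrix from \eqref{rotationmat}. Combined with the factorization $\bs C = \bs T\bs B\bs T^t$ from Lemma \ref{thm:PAL-eqn} and the orthogonality $\bs T^t \bs T = \bs I$, this yields the pointwise identity
\[
\bs C\,\nabla u\cdot\overline{\nabla\psi} \;=\; \bs B\,\breve\nabla u\cdot\overline{\breve\nabla\psi}
\]
throughout $\Omega_{\rm c}$, so the second-order part of ${\mathscr B}$ can be rewritten entirely through $\bs B$ and $\breve\nabla$ with no Jacobian factor appearing, since we are not changing integration variables but only rewriting a quadratic form pointwise.

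For the second stage, I will substitute $\breve\nabla u = w\,\breve\nabla v + v\,\breve\nabla w$ and $\overline{\breve\nabla\psi} = w^*\,\overline{\breve\nabla\phi} + \bar\phi\,\breve\nabla w^*$ and expand the bilinear pairing. This produces four scalar integrands: a top-order piece $|w|^2 \bs B\,\breve\nabla v\cdot\overline{\breve\nabla\phi}$; two convection-type pieces coupling $\breve\nabla v$ with $\breve\nabla w^*$ and $v$ with $\breve\nabla w$; and a zeroth-order piece $v\bar\phi\,(\breve\nabla w^*)^t \bs B\,\breve\nabla w$. The remaining $-k^2$ contribution in \eqref{sesq2d0} becomes $-k^2 n|w|^2 v\bar\phi$, since $u\bar\psi = |w|^2 v\bar\phi$.

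In the final stage, I will identify each of the four integrands with its counterpart in \eqref{tilden}. The top-order term yields $\breve{\bs B} = |w|^2 \bs B$ on the nose. For the two convective terms, I will use the matrix symmetry $\bs B^t = \bs B$, which is immediate from the explicit form in \eqref{rotationmat}, to swap the two vectors inside each dot product; reading off the $\breve\nabla v$-coupling then gives $\bs p = w\bs B\,\breve\nabla w^*$, while the elementary rewriting $\int v\,(w^*\bs B\,\breve\nabla w)\cdot\overline{\breve\nabla\phi}\,\mathrm{d}x = (v,\bs q^*\cdot\breve\nabla\phi)_{\Omega_{\rm c}}$ gives $\bs q = w^*\bs B\,\breve\nabla w$. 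Combining the zeroth-order term with the $-k^2$ piece yields $\breve n = (\breve\nabla w^*)^t\bs B\,\breve\nabla w - k^2|w|^2 n$, completing the identification. The only real obstacle is the bookkeeping of complex conjugates: inside $\Omega_{\varrho}^{\rm PAL}$ both $w$ and $\bs B$ are genuinely complex, so $\overline{\bs B}\neq\bs B$ in general and the argument must never invoke Hermitian symmetry of $\bs B$, relying only on the weaker matrix symmetry $\bs B^t = \bs B$; once this distinction is kept straight, the remaining work is routine. As a sanity check, in $\Omega$ we have $w\equiv 1$, $\bs B = \bs I$, $n = 1$, so $\breve{\bs B} = \bs I$, $\bs p = \bs q = \bs 0$ and $\breve n = -k^2$, matching the claim in the lemma.
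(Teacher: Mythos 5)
Your proposal is correct and follows essentially the same route as the paper's proof: the identity $\nabla=\bs T\,\breve\nabla$ together with $\bs B=\bs T^t\bs C\,\bs T$ (so the quadratic form is rewritten pointwise with no Jacobian), then the product rule on $wv$ and $w\phi$, and finally collecting the four terms into $\breve{\bs B},\bs p,\bs q,\breve n$. Your explicit use of the complex symmetry $\bs B^t=\bs B$ (rather than Hermitian symmetry) to reposition $\breve\nabla w^*$ is exactly the step the paper leaves implicit, and the check in $\Omega$ matches the lemma's final claim.
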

\begin{proof}  One verifies readily that
\begin{equation}\label{opsA}
\nabla =(\partial_x, \partial_y)^t=\bs T ( \partial_r,  \partial_{\theta}/r)^t=\bs T \, \breve \nabla\,,
\end{equation}
For clarity, we denote $\xi=w v$ and $\eta=(w \,\phi)^*.$ From  \eqref{opsA}, we immediately derive
\begin{equation}\label{nsigproperty}
 (\nabla \eta)^t \bs C \nabla \xi=(\bs T \breve \nabla  \eta)^t\, \bs C\, ( \bs T \breve \nabla \xi)=(\breve \nabla \eta)^t\,  (\bs T^t\bs C \bs T)\, \breve \nabla \xi=\breve \nabla^t \eta\, \bs B\, \breve \nabla \xi,
 \end{equation}
where we used the fact: $\bs B=\bs T^t\bs C \bs T,$ due to  \eqref{Cn2dpolarform} and the property:  $\bs T^{-1}=\bs T^t.$  Since
\begin{equation}\label{nsigprop2}
\breve \nabla \xi =(\breve\nabla w)v+w\breve \nabla v, \quad  \breve \nabla^t \eta =(\breve\nabla^t  w^*)\phi^*+w^*\breve \nabla^t \phi^*,
 \end{equation}
  we have
 \begin{equation}\label{neweqnB}
 \begin{split}
 \big(\bs C\, \nabla (w v), & \nabla (w\,\phi)\big)_{\Omega_{\rm c}}  =
 \big(|w|^2\bs B \breve \nabla v, \breve \nabla \phi \big)_{\Omega_{\rm c}}  +
 \big(w \bs B \breve \nabla  v,   (\breve \nabla w) \phi \big)_{\Omega_{\rm c}}  \\[4pt]
 & \quad +\big(\bs B \breve \nabla w\,  v,  w \breve \nabla \phi \big)_{\Omega_{\rm c}} +
  \big( (\breve \nabla w^*)^t  \bs B \breve \nabla w\,   v,   \phi \big)_{\Omega_{\rm c}}\,.
 \end{split}
 \end{equation}
Thus, the representation \eqref{sesquilinearB} follows from   \eqref{sesq2d3} and \eqref{neweqnB}.
\end{proof}

We next evaluate the terms involving $\bs B, n$ and $w,$ and  show that the singular coefficients
in \eqref{tilden} can be fully absorbed by $w.$  Although the derivation appears a bit lengthy and tedious, we strive to present the formulation in an accessible   manner, which depends only on the configuration of the layer: $R_1(\theta), \varrho,$ and the coordinate transformation: $T$ and $\sigma_0,\sigma_1.$ To
make the derivation  concise,  we also express them  in terms of 
\begin{equation}\label{notaA0f1}
\begin{split}
  & t=\frac{R_1}{R_1+ \sigma_1 T}=
  \frac{R_1(R_2-r)}{R_1(R_2-r)+ \sigma_1 (\varrho-1)R_1(r-R_1)}
  \in (0,1],
   \end{split}
\end{equation}
and use following regular functions in $r$:
\begin{equation}\label{notaA0f2}
\begin{split}
& \alpha=\sigma_1+\ri \sigma_0,\quad  \beta:=tS=R_1+(\alpha-\sigma_1)\tau, \quad \tau:=tT=\frac{(\varrho-1)R_1^2(r-R_1)}{R_1(R_2-r)+\sigma_1(\varrho-1)R_1(r-R_1)}, \\
  & \gamma_1:=t^2 T_r=  \frac{(\varrho-1)^2R_1^4}{(R_1(R_2-r)+\sigma_1(\varrho-1)R_1(r-R_1))^2},\\
  & \gamma_2:=t^2 T_\theta= (1-\varrho)R_1^2 R_1' \frac{  R_1(R_2-r)+r(r-R_1)   }{(R_1(R_2-r)+\sigma_1(\varrho-1)R_1(r-R_1))^2},
   \end{split}
\end{equation}
where we used
\begin{equation}\label{SSRTf1}
\begin{split}
  & T_r= \Big(\frac{(\varrho-1) R_1}{R_2-r} \Big)^2,
  \quad T_\theta= (1-\varrho) R_1' \frac{ R_1(R_2-r)+r(r-R_1)}{(R_2-r)^2}.
  \end{split}
\end{equation}


\smallskip 
With the following formulation in Cartesian coordinates  at our disposal, the implementation of the PAL technique 
using the spectral and finite elements becomes a normal coding exercise.
\begin{thm}\label{thm: implem}
The sesquilinear form $\breve{\mathscr{B}}(v, \phi)$ takes the form
\begin{equation}\label{eq:numerBlinear}
\breve{\mathscr{B}}(v, \phi)=\big(\bs T \breve{\bs B} {\bs T}^t\, \nabla v, \nabla \phi \big)_{\Omega_c}+(\bs T \bs p \cdot \nabla v, \phi)_{\Omega_c}+(v, \bs T {\bs q}^{*}\cdot \nabla \phi)_{\Omega_c}+(\breve n\, v, \phi)_{\Omega_c},
\end{equation}
where the matrix $\bs T$ is given in \eqref{rotationmat}. In $\Omega,$ we have
 $\breve{\bs B}=\bs I, \bs p=\bs q=\bs 0$ and $n=-k^2,$ while in   $\Omega_{\varrho}^{\rm PAL},$ the scalar function $\breve  n,$  and  the entries of the  matrix $\breve {\bs B}$, and the vectors
$\bs p,\bs q$  in \eqref{tilden} can be evaluated by the following expressions:
 \begin{equation}\label{eq:numern}
\breve n=k^2 \Big( \frac{\beta \sigma_1^2}{\alpha} -\alpha \beta + \frac{\sigma_1^2  R_1'^2 t^2 }{\alpha \beta} \Big)\frac{\gamma_1}{r} +\frac{9\gamma_1\sigma_1^2 t^2}{4 r R_1^2}\Big(\frac{\beta}{\alpha}+ \frac{R_1'^2}{ \alpha \beta
R_1^2}(R_1 t+\alpha \tau)^2 \Big). 
\end{equation}
\begin{equation}\label{eq:numerB}
\breve B_{11}=\frac{\beta t^2}{\alpha \gamma_1 r}\big(t^2+ (R_1' t^2+{\alpha \gamma_2})^2 {\beta^{-2}}  \big),\quad \breve B_{12}= -\big({ R_1' t^2}+{\alpha \gamma_2}\big)\frac{t^2} {\beta},\quad \breve B_{22}=\frac{\alpha \gamma_1 r}{\beta}t^2,
\end{equation}
\begin{subequations}\label{eq:numerP}
\begin{align}
&p_1=-\frac{\beta \sigma_1} {\alpha r}\Big(\frac {3t} {2R_1}+\ri k \Big)t^2 - \bigg\{\frac{3R_1' \sigma_1}{2rR_1 \alpha\beta}\Big(t+\frac{\alpha \tau}{R_1}\Big)
+ \frac{\ri k \sigma_1 R_1'}{r\alpha\beta}\bigg\} (R_1't^2+\alpha\gamma_2)t^2, \label{eq:numerp1}\\
&p_2=\frac{3R_1'\gamma_1 \sigma_1}{2R_1 \beta}\Big(t+\frac{\alpha \tau}{R_1}\Big)t^2
+ \ri k \sigma_1  \frac{R_1'\gamma_1}{\beta}t^2,\label{eq:numerp2}
\end{align}
\end{subequations}
and the elements of $\bs q$ can be obtained  by changing the signs in front of $\ri k $ in $p_1$ and $p_2$, i.e.,
$-\ri k$ in place of $\ri k$ in \eqref{eq:numerP}.
\end{thm}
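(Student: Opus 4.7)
The plan is to start from the polar-coordinate representation in Lemma \ref{thm:BVaPhi} and pass to the Cartesian gradient via the orthogonal rotation $\bs T$. Since $\bs T^t=\bs T^{-1}$ and $\nabla=\bs T\,\breve\nabla$ by \eqref{opsA}, we have $\breve\nabla=\bs T^t\nabla$. Substituting this in each of the four terms of \eqref{sesquilinearB} immediately gives the principal part $(\bs T\breve{\bs B}\bs T^t\nabla v,\nabla\phi)$, the first-order pieces $(\bs T\bs p\cdot\nabla v,\phi)$ and $(v,\bs T\bs q^*\cdot\nabla\phi)$, and leaves the zero-order term intact; this establishes \eqref{eq:numerBlinear} and reduces the task to evaluating $\breve{\bs B},\bs p,\bs q,\breve n$ explicitly inside $\Omega_\varrho^{\rm PAL}$.

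Inside the layer I would freely use the identities induced by \eqref{asBeqn}--\eqref{TtransformA} together with the shorthand in \eqref{notaA0f1}--\eqref{notaA0f2}: $S=R_1+\alpha T$, $S_r=\alpha T_r$, $S_\theta=R_1'+\alpha T_\theta$, $tS=\beta=R_1t+\alpha\tau$, $t^2T_r=\gamma_1$, $t^2T_\theta=\gamma_2$, and $t^2S_\theta=R_1't^2+\alpha\gamma_2$. Then $\breve{\bs B}=|w|^2\bs B=t^3\bs B$ drops out by direct computation; for instance,
\[
t^3\bs B_{11}=\frac{t^3 S}{rS_r}+\frac{t^3 S_\theta^2}{rS_r S}=\frac{\beta t^2}{\alpha\gamma_1 r}\Big(t^2+\frac{(R_1't^2+\alpha\gamma_2)^2}{\beta^2}\Big),
\]
and the analogous calculations for the $(1,2)$ and $(2,2)$ entries produce \eqref{eq:numerB}.

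For the convection vectors I would use $\bs p=\breve{\bs B}\,\breve\nabla\ln w^*$ and $\bs q=\breve{\bs B}\,\breve\nabla\ln w$, which follow from $w\,\breve\nabla w^*=|w|^2\,\breve\nabla\ln w^*$. Since $\ln w^*=\tfrac32\ln t-\ri k\sigma_1 T$ differs from $\ln w=\tfrac32\ln t+\ri k\sigma_1 T$ only in the sign of the $\ri k$-term, the map $\bs p\mapsto\bs q$ is precisely the sign flip asserted in the theorem. Direct differentiation of $t=R_1/(R_1+\sigma_1 T)$ gives $t_r/t=-\sigma_1 T_r t/R_1$ and $t_\theta/t=\sigma_1 t(R_1'T-R_1 T_\theta)/R_1^2$, so $\breve\nabla\ln w^*$ is explicit. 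Multiplying by $\breve{\bs B}$ and regrouping through the shorthand then yields \eqref{eq:numerP}: in $p_1$ the $\ri k\gamma_2$-contributions from $\breve B_{11}$ and $\breve B_{12}$ cancel, while the remaining pieces collapse by the identity $tt_\theta=\sigma_1(R_1'\tau t^2-R_1\gamma_2 t)/R_1^2$.

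Finally, for $\breve n=(\breve\nabla w^*)^t\bs B\,\breve\nabla w-k^2|w|^2 n$, the symmetry of $\bs B$ together with the factorisation $\breve\nabla w^{\pm}=w^{\pm}(\tfrac32\breve\nabla\ln t\pm\ri k\sigma_1\breve\nabla T)$ gives
\[
(\breve\nabla w^*)^t\bs B\,\breve\nabla w=|w|^2\Big[\tfrac94(\breve\nabla\ln t)^t\bs B\,\breve\nabla\ln t+k^2\sigma_1^2(\breve\nabla T)^t\bs B\,\breve\nabla T\Big],
\]
the cross terms dropping because $\bs B^t=\bs B$. Both quadratic forms simplify by the general identity
\[
B_{11}a^2+2B_{12}ab/r+B_{22}b^2/r^2=\frac{Sa^2}{rS_r}+\frac{(S_\theta a-S_r b)^2}{rSS_r},
\]
combined with $S_\theta T_r-S_r T_\theta=R_1'T_r$ and $S_\theta(t_r/t)-S_r(t_\theta/t)=-\sigma_1 tT_r R_1'S/R_1^2$; together with $|w|^2 n=\alpha\beta\gamma_1/r$, the $k^2$-group assembles into the first bracket of \eqref{eq:numern}, and the $9/4$-group, after using $\beta=R_1t+\alpha\tau$ to rewrite the $R_1'^2$-term, into the second. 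The main obstacle will not be conceptual but rather the careful bookkeeping of the several $R_1'$-contributions and their consolidation through $\beta,t,\tau,\gamma_1,\gamma_2$ into the compact form displayed in the theorem.
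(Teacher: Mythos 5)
Your proposal is correct and follows essentially the same route as the paper: rewrite the form of Lemma \ref{thm:BVaPhi} in Cartesian variables via $\breve\nabla=\bs T^t\nabla$ and orthogonality of $\bs T$, then evaluate $\breve{\bs B}=t^3\bs B$, $\bs p$, $\bs q$ and $\breve n$ using the same key identities ($S_\theta T_r-S_rT_\theta=R_1'T_r$, the derivatives of $t$, the quadratic-form completion, and the shorthand $t,\beta,\tau,\gamma_1,\gamma_2$). Your logarithmic-derivative packaging ($\bs p=\breve{\bs B}\breve\nabla\ln w^*$, cross terms in $\breve n$ dropping by symmetry of $\bs B$) is only a cosmetic tidying of the paper's modulus--phase computation with $A=t^{3/2}$, and all the identities you invoke check out.
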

\begin{proof} Using \eqref{opsA} and the property: $\bs T^{-1}=\bs T,$ we have $\breve \nabla=\bs T^t \nabla.$
Accordingly,  we can rewrite the formulation of  $\breve{\mathscr{B}}(v, \phi)$ in  Lemma \ref{thm:BVaPhi} as
\begin{equation}\label{sesquilinearB00}
\begin{split}
\breve {\mathscr B}(v,\phi)
&=\big(\breve{\bs B} \breve \nabla v, \breve \nabla \phi \big)_{\Omega_{\rm c}} + \big(\bs p\cdot \breve \nabla  v,    \phi \big)_{\Omega_{\rm c}}  +\big(  v,  \bs q^*\cdot  \breve \nabla \phi \big)_{\Omega_{\rm c}} +\big(\breve n\, v, \phi\big)_{\Omega_{\rm c}}\\
&=(\bs T \breve{\bs B} {\bs T}^t \,\nabla v, \nabla \phi )_{\Omega_c}+(\bs T \bs p \cdot \nabla v, \phi)_{\Omega_c}+(v, \bs T {\bs q}^{*}\cdot \nabla \phi)_{\Omega_c}+(\breve n\, v, \phi)_{\Omega_c}.
\end{split}
\end{equation}
Now, the main  task is to derive the representations in  \eqref{eq:numern}-\eqref{eq:numerP}.
For clarity, we deal with them separately in three cases below.

\smallskip

(i) We first derive  \eqref{eq:numern}  from the expression in \eqref{tilden}.  By  direct calculation, we find
\begin{equation}\label{eq:wbw0}
\begin{aligned}
(\breve \nabla w^*)^t  \bs B\, \breve \nabla w&=( w^{*}_r,w^{*}_\theta/r)
\begin{pmatrix}
B_{11}& B_{12}\\
B_{12}& B_{22}
\end{pmatrix}
\begin{pmatrix}
 w_r\\
w_{\theta}/r
\end{pmatrix}\\
&=B_{11}|w_r|^2+\frac{B_{22}}{r^2}|w_{\theta}|^2+\frac{B_{12}}{r} ( w_r^{*} w_{\theta}+w_{\theta}^{*}w_r)\\
&=B_{11}|w_r|^2+\frac{B_{22}}{r^2}|w_{\theta}|^2+\frac{2  B_{12}}{r}\Re\{w_r^{*} w_{\theta}\}.
\end{aligned}
\end{equation}
 Denote  $A=t^{3/2},$  so $w=A e^{\ri k \sigma_1 T}$ and $A=|w|.$  Then, we find readily that
\begin{equation}\label{newnotA}
\begin{split}
w_r=(A_r +\ri k \sigma_1  T_r A)e^{\ri k \sigma_1 T}=\Big(\frac{A_r} A +\ri k \sigma_1  T_r\Big)w,\quad
w_\theta=\Big(\frac{A_\theta} A +\ri k \sigma_1  T_\theta\Big)w,
\end{split}
\end{equation}
so we have 
\begin{equation}\label{newnotB}
\begin{split}
& |w_r|^2=\Big|\frac{A_r} A +\ri k \sigma_1  T_r\Big|^2 |w|^2=A_r^2+k^2\sigma_1^2 T_r^2A^2,\quad
|w_\theta|^2=A_\theta^2+k^2\sigma_1^2 T_\theta^2A^2,\\[4pt]
& \Re\{w_r^{*} w_{\theta}\}= A_rA_\theta+k^2\sigma_1^2 T_rT_\theta A^2, \quad\\[4pt]
& w_r^* w=A A_r-\ri k \sigma_1  T_r A^2, \quad w_\theta^* w=A A_\theta -\ri k \sigma_1  T_\theta A^2.
\end{split}
\end{equation}
Recall from  \eqref{rotationmat} that
\begin{equation}\label{BsBs}
B_{11}=\dfrac{S}{r S_r}\Big(1+\dfrac{S_{\theta}^2}{S^2} \Big),\quad B_{12}=-\frac{S_\theta} S,\quad
B_{22}=\frac{rS_r} S,\quad n=\frac{SS_r} r.
\end{equation}
Then inserting \eqref{newnotB}-\eqref{BsBs} into  \eqref{eq:wbw0}, we derive
\begin{equation}\label{SwBtW}
\begin{split}
(\breve \nabla w^*)^t  \bs B\, \breve \nabla w &=\big(A_r^2+k^2\sigma_1^2 T_r^2A^2\big)
\Big(\dfrac{S}{r S_r}+\dfrac{S_{\theta}^2}{r SS_r} \Big)+\frac {S_r} {rS}\big(A A_\theta +\ri k \sigma_1  T_\theta A^2\big)\\
&\quad -\frac{2S_\theta}{rS} \big(A_rA_\theta+k^2\sigma_1^2 T_rT_\theta A^2\big)\\
&= \dfrac{S}{r S_r} \big(A_r^2+k^2\sigma_1^2 T_r^2A^2\big)
+\dfrac{1}{r SS_r}\big(A_r^2S_\theta^2+A_\theta^2 S_r^2-2S_rS_\theta A_r A_\theta\big)\\
&\quad +\dfrac{k^2\sigma_1^2A^2}{r SS_r}\big(T_r^2S_\theta^2+T_\theta^2 S_r^2-2S_rS_\theta T_r T_\theta\big)\\
&=\dfrac{S}{r S_r} \big(A_r^2+k^2\sigma_1^2 T_r^2A^2\big)+
\dfrac{1}{r SS_r}\big(A_rS_\theta - A_\theta S_r\big)^2+\dfrac{k^2\sigma_1^2A^2}{r SS_r}
\big(T_rS_\theta - T_\theta S_r\big)^2.
\end{split}
\end{equation}
By \eqref{tilden} and \eqref{BsBs}-\eqref{SwBtW},
\begin{equation}\label{newBas}
\begin{split}
\breve n & =(\breve \nabla w^*)^t  \bs B\, \breve \nabla w -k^2 \, |w|^2\, n=
A_r^2 \dfrac{S}{r S_r}+k^2A^2\Big(\sigma_1^2 T_r^2 \frac{S}{rS_r} -\frac{SS_r} r\Big)\\
&\quad +\dfrac{1}{r SS_r}\big(A_rS_\theta - A_\theta S_r\big)^2+k^2\sigma_1^2\dfrac{A^2}{r SS_r}
\big(T_rS_\theta - T_\theta S_r\big)^2.
\end{split}
\end{equation}

We have from \eqref{asBeqn}-\eqref{TtransformA}  that in $\Omega^{\rm PAL}_{\varrho},$
$S=R_1+\alpha T,$ $R_2=\varrho R_1,$ and
\begin{equation}\label{SSRTf0}
\begin{split}
  & T=\dfrac{(R_2-R_1)(r-R_1)}{R_2-r},\quad S_r=\alpha T_r, \quad S_\theta=R_1'+\alpha T_\theta.
  \end{split}
\end{equation}
Moreover, from the expression  of $t$ in \eqref{notaA0f1}, we find immediately that
\begin{equation}\label{ArBr}
\begin{split}
 & A_r=\frac 3 2 t^{\frac 1 2}t_r=-\frac{3\sigma_1}{2R_1} t^{\frac 5 2} T_r, 
  \quad A_\theta=\frac 3 2 t^{\frac 1 2}t_\theta=\frac{3\sigma_1 t^{\frac 5 2}}{2R_1} \Big( \frac{R_1'}{R_1}T-T_\theta\Big).
 \end{split}
\end{equation}
Then by \eqref{SSRTf0}-\eqref{ArBr},
\begin{equation}\label{TsTsf1}
\begin{split}
A_rS_\theta - A_\theta S_r & =-\frac{3\sigma_1}{2R_1} t^{\frac 5 2} T_r (R_1'+\alpha T_\theta)-
\frac{3\sigma_1 t^{\frac 5 2}}{2R_1} \Big( \frac{R_1'}{R_1}T-T_\theta\Big)\alpha T_r =
-\frac{3R_1' \sigma_1}{2R_1} t^{\frac 5 2} T_r \Big(1+ \frac{\alpha T}{R_1}  \Big),
\end{split}
\end{equation}
and
\begin{equation}\label{TsTs}
\begin{split}
T_rS_\theta - T_\theta S_r=T_r (R_1'+\alpha T_\theta)-T_\theta (\alpha T_r)=R_1'T_r.
\end{split}
\end{equation}
We now express $\breve n$ in terms of the regular functions $t, \beta, \tau, \gamma_1,\gamma_2$ in \eqref{notaA0f1}-\eqref{notaA0f2} as follows:
\begin{equation}\label{newBasf2}
\begin{split}
\breve n & =A_r^2 \dfrac{S}{r S_r}+k^2A^2\Big(\sigma_1^2 T_r^2 \frac{S}{rS_r} -\frac{SS_r} r\Big)+\dfrac{1}{r SS_r}\frac{9R_1'^2 \sigma_1^2}{4R_1^2} t^{5 } T_r^2 \Big(1+\alpha \frac{T}{R_1}  \Big)^2+
k^2\sigma_1^2\dfrac{A^2}{r SS_r} R_1'^2 T_r^2\\
&=\frac{9\gamma_1\sigma_1^2 t^2}{4 r R_1^2} \frac{\beta}{\alpha}+\frac{k^2\gamma_1} r\Big(\sigma_1^2 \frac{\beta} {\alpha} -\alpha\beta\Big)+
\frac{9\gamma_1\sigma_1^2 R_1'^2 t^2}{4 r R_1^2  \alpha \beta
}\Big(t+\frac{\alpha \tau} {R_1} \Big)^2 +   k^2 \sigma_1^2 \frac{  R_1'^2\gamma_1 t^2 }{r \alpha \beta}.
\end{split}
\end{equation}
We obtain the identity  \eqref{eq:numern}   immediately by regrouping the terms.

\medskip

(ii) Now, we calculate the elements of the vectors $\bs p$ and $\bs q$ in   \eqref{sesquilinearB}, that is,
\begin{equation}\label{eq:qqq}
\begin{split}
\bs p=(p_1,p_2)^t & :=w \bs B \breve \nabla w^{*}= \bs B (\breve \nabla w^{*} w) 
=  \big(B_{11}w_r^* w+B_{12}w_\theta^* w/r, B_{12}w_r^* w +B_{22}w_\theta^* w/r\big)^t.
  \end{split}
\end{equation}
Then by  \eqref{notaA0f1}-\eqref{notaA0f2},  \eqref{newnotB}-\eqref{BsBs} and  \eqref{TsTsf1} -\eqref{TsTs},
\begin{equation}\label{eq:ppp1}
\begin{split}
p_1&=B_{11}w_r^* w+\frac{B_{12}} r w_\theta^* w=\dfrac{S}{r S_r}\Big(1+\dfrac{S_{\theta}^2}{S^2} \Big) \big(A A_r-\ri k \sigma_1  T_r A^2\big)-\frac{S_\theta}{rS}\big(A A_\theta -\ri k \sigma_1  T_\theta A^2\big)\\
&=\dfrac{S}{r S_r}  \big(A A_r-\ri k \sigma_1  T_r A^2\big)+\dfrac{S_\theta A}{r SS_r}\big(A_rS_\theta - A_\theta S_r\big)
-\ri k\sigma_1 \dfrac{S_\theta A^2}{r SS_r}
(T_rS_\theta - T_\theta S_r\big)\\
&= -\frac{\beta} {\alpha r}\Big(\frac {3\sigma_1 t} {2R_1}+\ri k \sigma_1\Big)t^2 -\frac{3R_1' \sigma_1}{2rR_1 \alpha\beta}\Big(t+\frac{\alpha \tau}{R_1}\Big)
(R_1't^2+\alpha \gamma_2)t^2- \frac{\ri k \sigma_1 R_1'}{r\alpha\beta}(R_1't^2+\alpha\gamma_2)t^2\\
&=-\frac{\beta} {\alpha r}\Big(\frac {3\sigma_1 t} {2R_1}+\ri k \sigma_1\Big)t^2 - \Big(\frac{3R_1' \sigma_1}{2rR_1 \alpha\beta}\Big(t+\frac{\alpha \tau}{R_1}\Big)
+ \frac{\ri k \sigma_1 R_1'}{r\alpha\beta}\Big) (R_1't^2+\alpha\gamma_2)t^2,
  \end{split}
\end{equation}
and
\begin{equation}\label{eq:ppp2}
\begin{split}
p_2&=B_{12}w_r w^* +\frac{B_{22}} rw_\theta w^*=
-\dfrac{S_\theta}{ S} \big(A A_r-\ri k \sigma_1  T_r A^2\big)+\frac{ S_r}{S}\big(A A_\theta -\ri k \sigma_1  T_\theta A^2\big)\\
&=\dfrac{A}{S} \big( A_\theta S_r-A_rS_\theta\big)
+\ri k\sigma_1 \dfrac{A^2}{S}
(T_rS_\theta - T_\theta S_r\big)\\
&= \frac{3R_1'\gamma_1 \sigma_1}{2R_1 \beta}\Big(t+\frac{\alpha \tau}{R_1}\Big)t^2
+ \ri k \sigma_1  \frac{R_1'\gamma_1}{\beta}t^2.
  \end{split}
\end{equation}
Thus, we obtain  \eqref{eq:numerP}.

We now turn to the elements of the vector  $\bs q.$ Note that
\begin{equation}\label{eq:ppp}
\begin{split}
\bs q=(q_1,q_2)^t & :=w^* \bs B \breve \nabla w 
= \big(B_{11}w_r w^*+B_{12}w_\theta w^*/r, B_{12}w_r w^* +B_{22}w_\theta w^*/r\big)^t.
  \end{split}
\end{equation}
Following the same lines as in  \eqref{eq:ppp1}-\eqref{eq:ppp2}, we find that $q_1,q_2$ are identical to
$p_1,p_2$ with a change of $\ri k$ therein to $-\ri k.$
\medskip

(iii) Finally, we deal with the matrix $\breve {\bs B},$ that is,
\begin{equation}\label{bsB}
\breve {\bs B}=|w|^2 \bs B =\begin{pmatrix}
\dfrac{t^3 S}{r S_r}\Big(1+\dfrac{S_{\theta}^2}{S^2} \Big)\;\; &  -\dfrac{t^3 S_{\theta}}{S}\\[8pt]
-\dfrac{t^3 S_{\theta}}{S} & \dfrac{r t^3 S_r}{S}
\end{pmatrix}:=\begin{pmatrix}
\breve {B}_{11}\;\; & \breve {B}_{12}\\[2pt]
\breve {B}_{12} & \breve {B}_{22}
\end{pmatrix}.
\end{equation}
Using the notation in \eqref{notaA0f2} and  the properties in \eqref{SSRTf0},  we can derive  the entries of
$\breve {\bs B}$ in \eqref{eq:numerB}
straightforwardly.
\end{proof}

\begin{rem}\label{circular} {\em 
In the circular case, the artificial layer  is an annulus, i.e.,
\begin{equation}\label{circularOmega2}
  \Omega_{\varrho}^{\rm PAL}=\{R_1<r<R_2=\varrho R_1,\;0\le \theta<2\pi\},
\end{equation}
where $R_1$ is independent of $\theta.$ Thus, $R_1'$ and $\gamma_2$ in \eqref{notaA0f2} reduce to $0.$ Consequently, several terms in \eqref{eq:numern}-\eqref{eq:numerP} vanish and the variables involved in the sesquilinear form $\breve{\mathscr{B}}(v, \phi)$ in \eqref{eq:numerBlinear} are reduced to
\begin{equation}\label{circweakform1}
\begin{split}
& \breve B_{11}=\frac{\beta}{\alpha \gamma_1 r}t^4,\quad \breve B_{12}=0,\quad \breve B_{22}=\frac{\alpha \gamma_1 r}{\beta}t^2,\quad \breve n=k^2 \Big( \frac{\beta \sigma_1^2}{\alpha} -\alpha \beta \Big)\frac{\gamma_1}{r} +\frac{9 \beta }{4 \alpha } \frac{\gamma_1 \sigma_1^2}{r R_1^2}t^2,\\
&  p_1=-\frac{\beta \sigma_1}{\alpha r}\Big( \frac{3t}{2R_1}+\ri k   \Big)t^2,\quad p_2=0,\quad q_1=-\frac{\beta \sigma_1}{\alpha r}\Big( \frac{3t}{2R_1}-\ri k  \Big)t^2,\quad q_2=0.
\end{split}
\end{equation} }
\end{rem}

\section{Numerical results and comparisons}\label{sect5:numer}

Consider the exterior wave scattering problem \eqref{extproblem2d} with the domain truncated with a general shar-shaped PAL layer. Assume that in all the numerical tests, a plane wave $e^{\ri k r\cos(\theta-\theta_0)}$ is incident onto the scatterer $D$ with incident angle $\theta_0$. Correspondingly, $g$ in \eqref{extproblem2d} takes the form
\begin{equation}\label{eq: sourceg}
g=-\exp\big(\ri k R_0(\theta)\cos(\theta-\theta_0)\big),
\end{equation}
given that the boundary of the scatterer $D$ is parameterized by $\partial D=\{(r,\theta): r=R_0(\theta), \theta=[0,2\pi) \}.$ 
\subsection{Circular PAL layer}
To investigate the performance of the proposed method, we start by solving \eqref{extproblem2d} with a circular scatterer $D$, so the exact solution is available as a series expansion
\begin{equation}
U(\bs x)=-\sum_{|m|=0}^\infty    \frac{ \ri^m  J_m(kR_0)}{H^{(1)}_m (kR_0)}H^{(1)}_m (kr) e^{\ri m(\theta-\theta_0)},\quad r>R_0.
\end{equation}
The domain is truncated via an annular PAL layer. The implementation is  based on Theorem \ref{thm: implem} with coefficients give by   \eqref{circweakform1}. The parameters  are set to be $\sigma_0=\sigma_1=1.$    We also compare it with PML with bounded and unbounded absorbing functions, i.e., PML$_n$ and PML$_\infty$ in \eqref{RegularKernel}-\eqref{SingleKernel}, respectively.  

Here, we use Fourier expansion approximation in $\theta$ direction, and spectral-element method in radial direction \cite{Shen.TWBook01}.  In the test, we fix $(R_0, R_1, R_2)=(1, 2, 2.2)$ and the incident angle $\theta_0=0.$  Let $M$ be the cut-off number of the Fourier modes, and $\bs N=(N_1,N)$ be the highest polynomial degrees in $r$-direction of two layers, respectively. We measure the maximum errors in $\Omega.$ We fix $N_1=300,$ $M=kR_1$ and vary $N$ so that the waves in the interior layer can be well-resolved, and the error should be dominated by the approximation in the outer annulus.  In Figure \ref{fig: comparesty}, we compare the accuracy of the solver with PAL, PML$_{n}$ (with $n=1$, $\sigma_0=(5.16, 2.78,1.89, 1.43, 1.15, 1.01)$ for $k=(50, 100, 150, 200, 250, 300)$, respectively: optimal value based on the rule in \cite{chen2005adaptive}),  and PML$_\infty$ ($\sigma_0=1/k$, as suggested in \cite{bermudez2007optimal}) for various $k$. It can be seen from Figure \ref{fig: comparesty}(a) that when the wavenumber $k$ is relatively small, the error history lines of these three methods intertwine with each other for polynomial degree $N<20$ and the error obtained by PML$_{\infty}$ is slightly smaller than that obtained by the other two methods. As $N$ increases, the convergence error for PML$_{\infty}$ becomes much larger than PAL and PML$_n$ due to large roundoff errors induced by the Gauss quadrature of singular functions. As depicted in Figure \ref{fig: comparesty} (b)-(f), when $k$ increases, the PAL obviously outperforms its rivals. 
For instance, when $k=150$ and $N=20$, the errors for PAL is around $10^{-8}$ while that for PML$_n$ and PML$_{\infty}$ are around $10^{-3}\sim 10^{-4}. $ These comparisons show that the PAL method is clearly advantageous, especially when the  wavenumber is large. 

\begin{figure}[htbp]
\begin{center}
  \subfigure[$k=50$ ]{ \includegraphics[scale=.27]{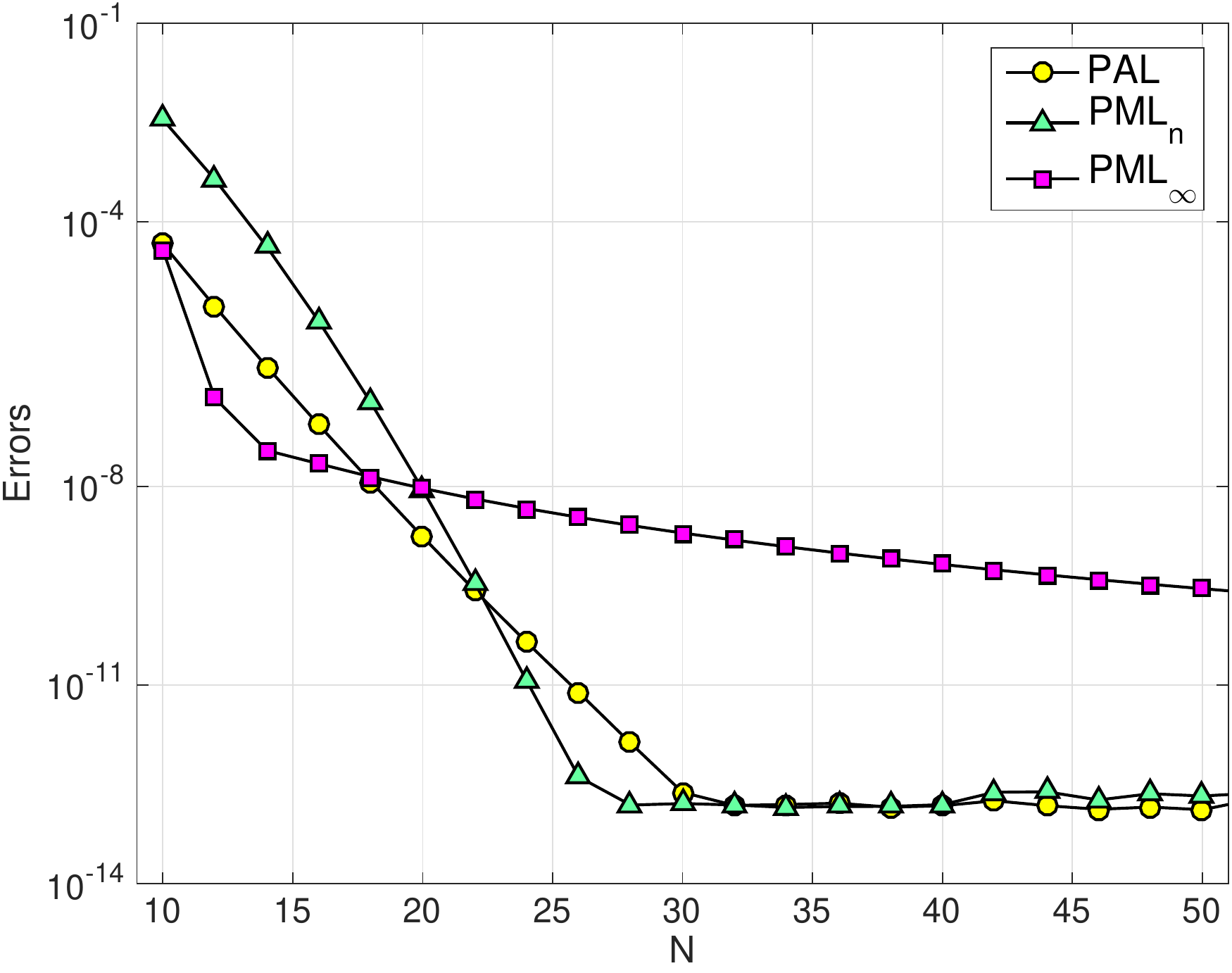}}\;\;
  \subfigure[$k=100$ ]{ \includegraphics[scale=.27]{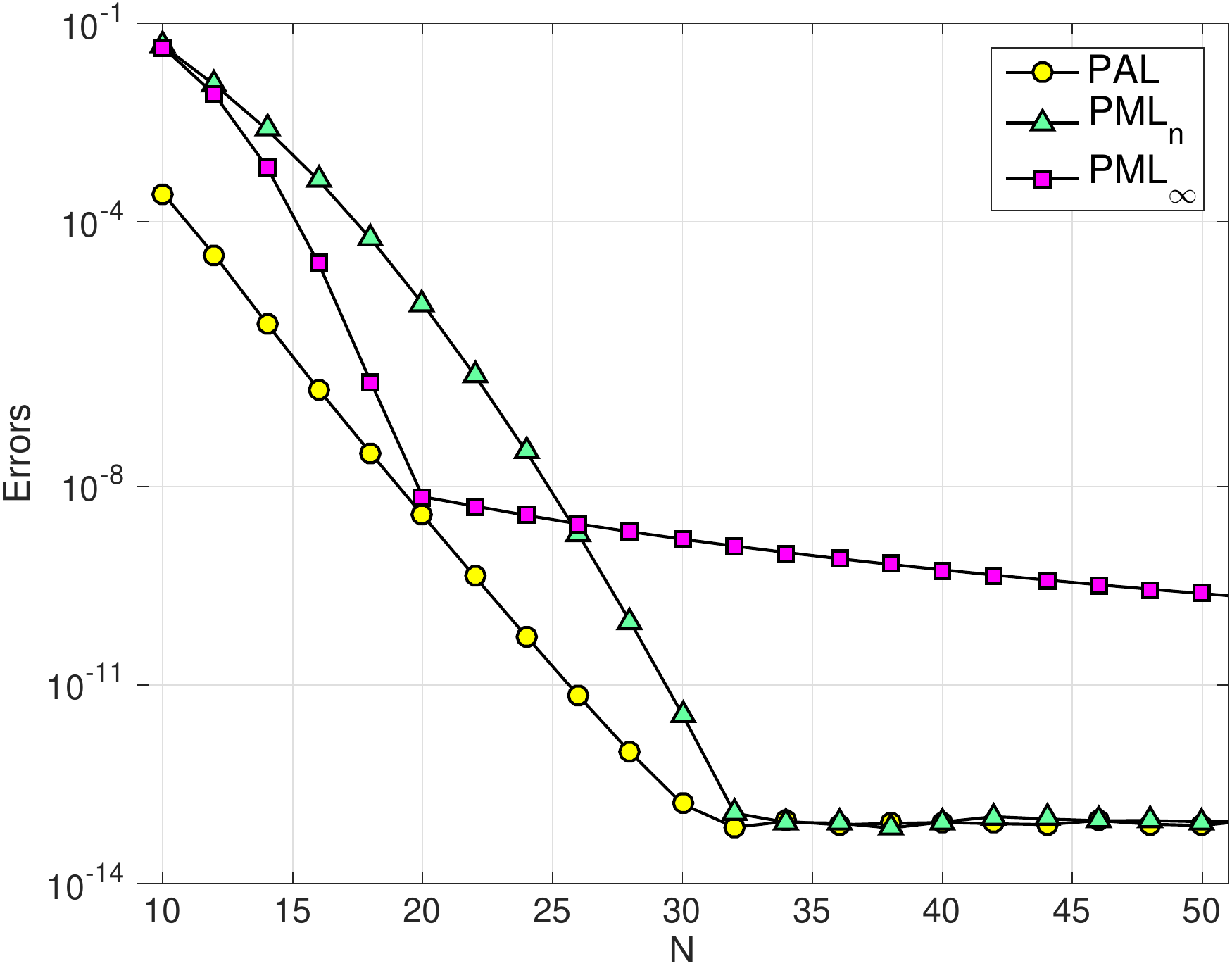}}\;\;
 \subfigure[ $k=150$]{ \includegraphics[scale=.27]{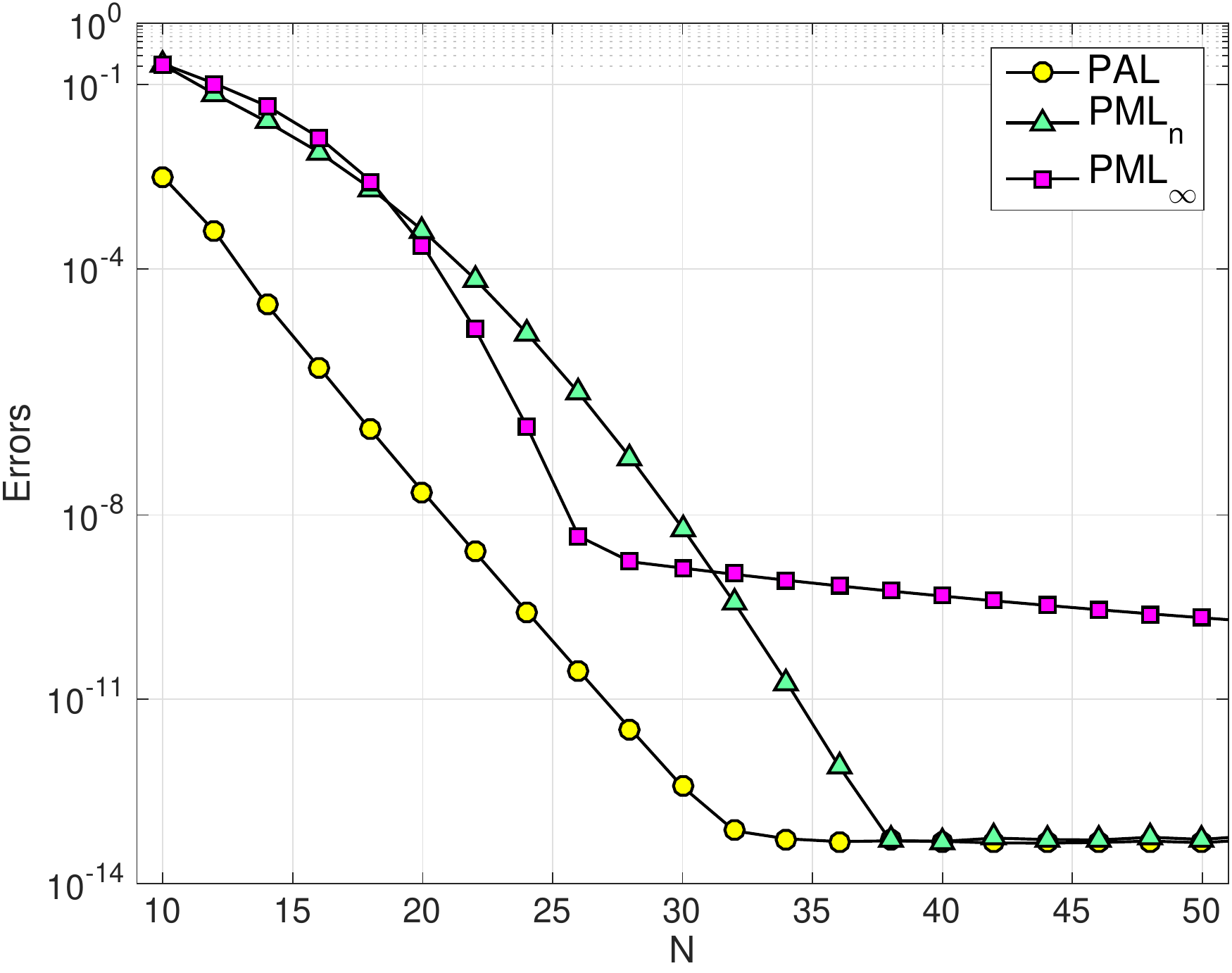}} \\
  \subfigure[ $k=200$]{ \includegraphics[scale=.27]{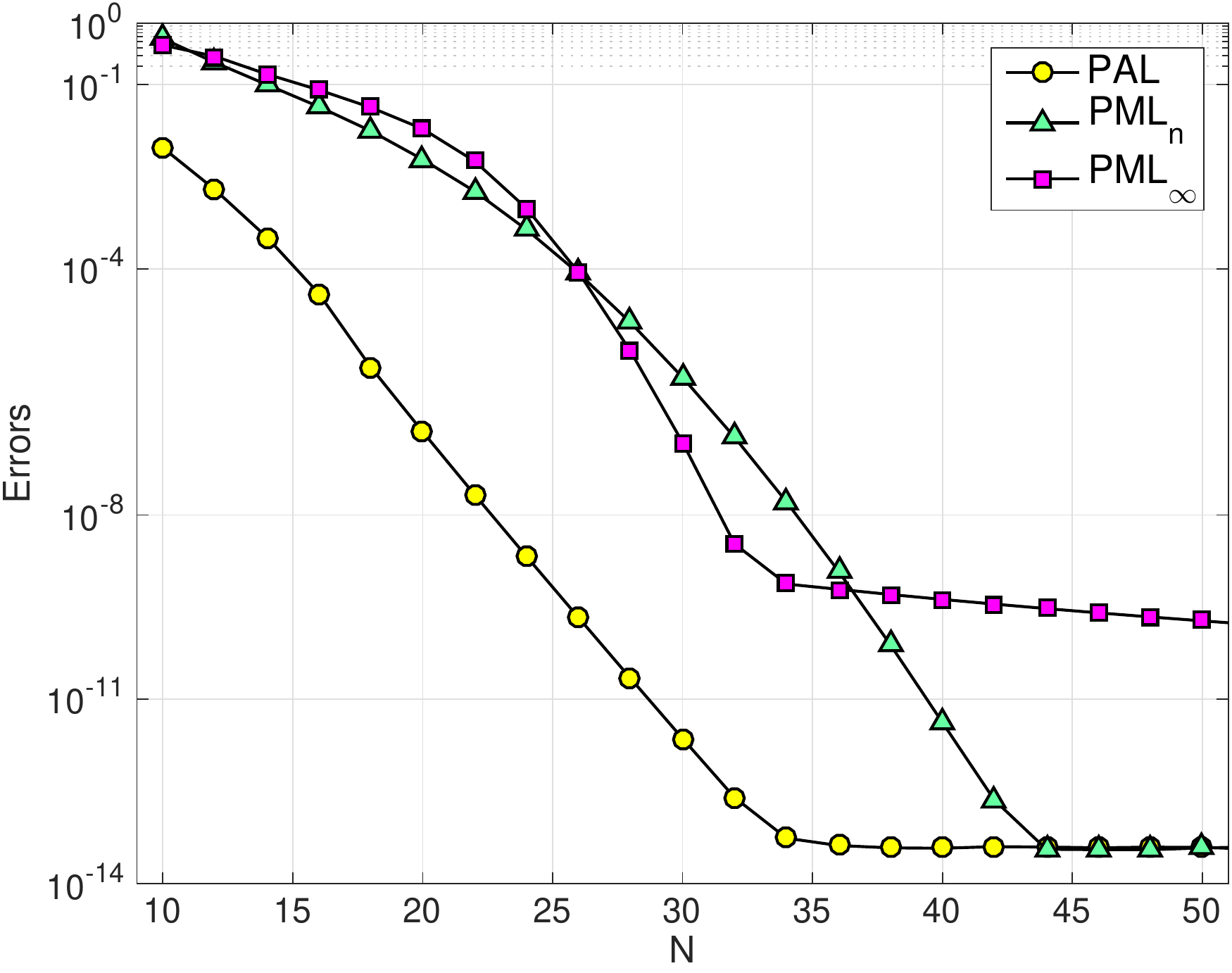}}\;\;
  \subfigure[ $k=250$]{ \includegraphics[scale=.27]{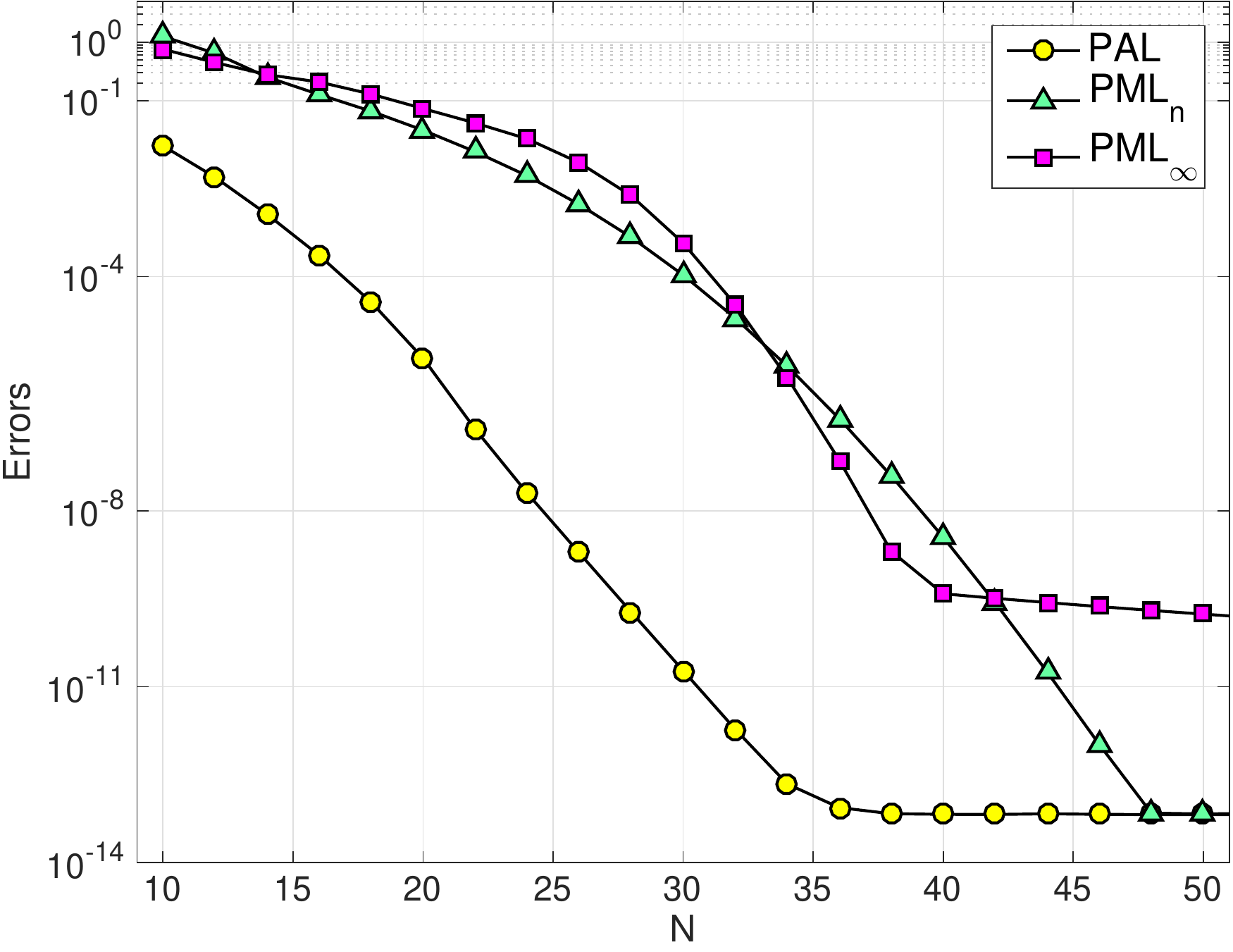}}\;\;
 \subfigure[$k=300$ ]{ \includegraphics[scale=.27]{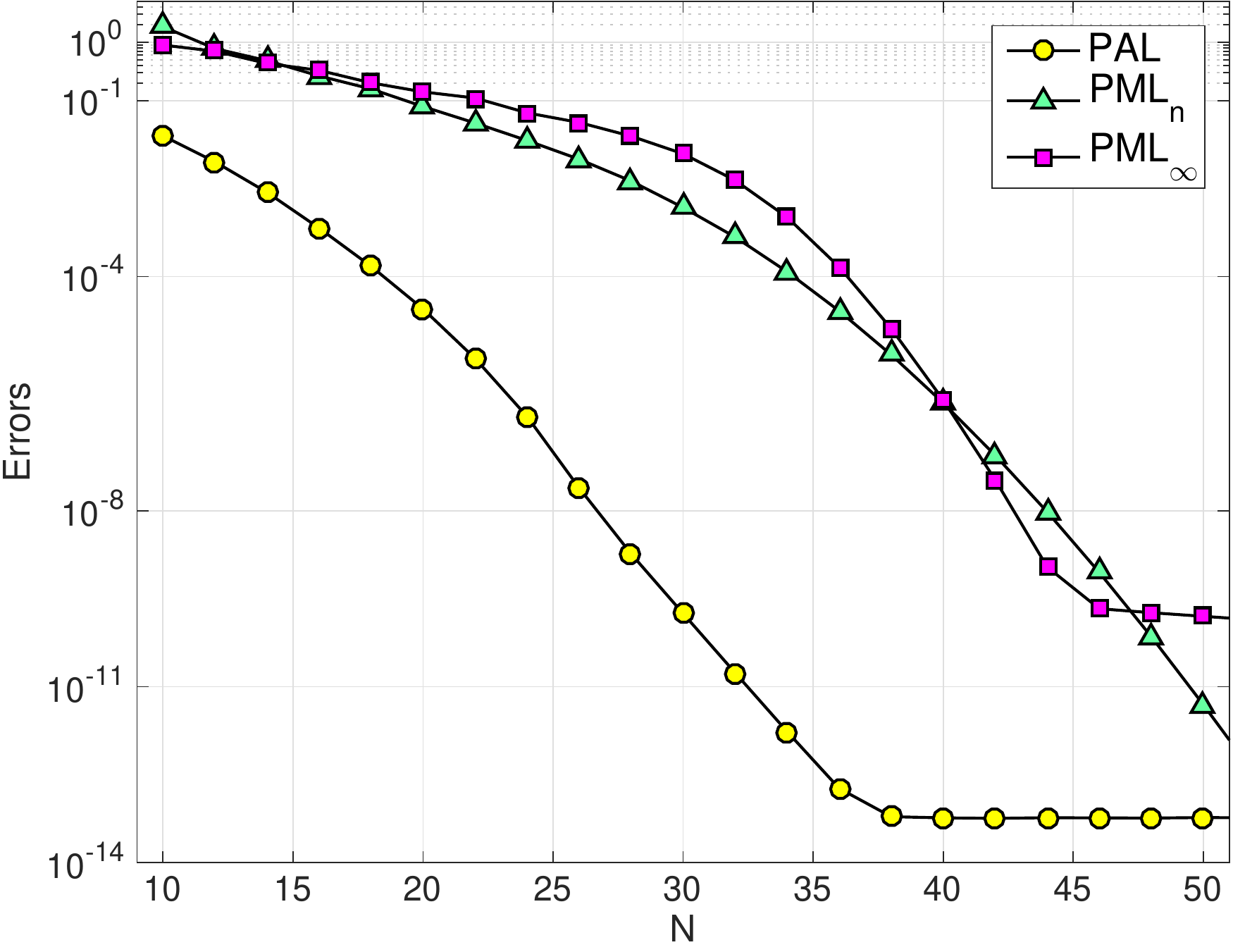}} \\
    \caption{\small A comparation study: PAL vs $\text{PML}_n$ vs $\text{PML}_{\infty}$ for various $k=50,100,150,200,250,300.$} 
   \label{fig: comparesty}
\end{center}
\end{figure}

In order to study the influence of the thickness of the artificial layer, we keep $R_1=2$, vary $R_2$ and tabulate in Table \ref{tab: thickness} the numerical errors  for a fixed polynomial degree $(N_1, N)=(300,30)$ for a large range of $k=(10, 50, 100, 200, 300, 500).$ It demonstrates that the value of the thickness $d=R_2-R_1$ has major impact on the accuracy. To optimize the PAL method and achieve high accuracy,   a good  choice of the thickness turns to be   $d=\frac{10}{kR_1}.$ We also find the PAL is much less dependent on the choice of $\sigma_0$ and  $\sigma_1,$ and it is safe to choose   $\sigma_0=\sigma_1=1,$ which is largely due to the compression of the transformation. 

\begin{table}
  \centering
    \caption{Errors vs thickness $d$ for PAL with $(N_1,N)=(300, 30)$}
    \vspace*{-5pt}
  \begin{tabular}{@{}rrrrrr@{}}
    \midrule
 $k$   & \multicolumn{1}{c}{$d=1$} & \multicolumn{1}{c}{$d=0.5$} & \multicolumn{1}{c}{$d=0.1$} & \multicolumn{1}{c}{$d=0.01$}& \multicolumn{1}{c}{$d=0.001\!\!$} \\
    \cmidrule{2-6}
    10 & 1.11E-12   &1.60E-12   & 1.26E-8&  1.39E-4 &1.69E-2 \\
    50 & 9.92E-11 & 4.67E-13 & 3.27E-12 &2.82E-7 &5.69E-4 \\
    100 &2.65E-6 &3.79E-11  &2.08E-13  &1.34E-8  &8.65E-5\\
    200 &5.72E-4  &1.91E-6  &9.60E-14  &3.12E-10  &7.73E-6 \\
    300 &  3.60E-3& 7.51E-5 &2.65E-13  &3.03E-11  &1.30E-6 \\
    500 &1.57E-2 & 1.40E-3 &1.33E-11  &1.90E-12  &1.68E-7 \\
        \midrule
  \end{tabular}
     \label{tab: thickness}
\end{table}

\begin{figure}[htbp]
\begin{center}
 \subfigure[illustration of mesh grids]{ \includegraphics[scale=.36]{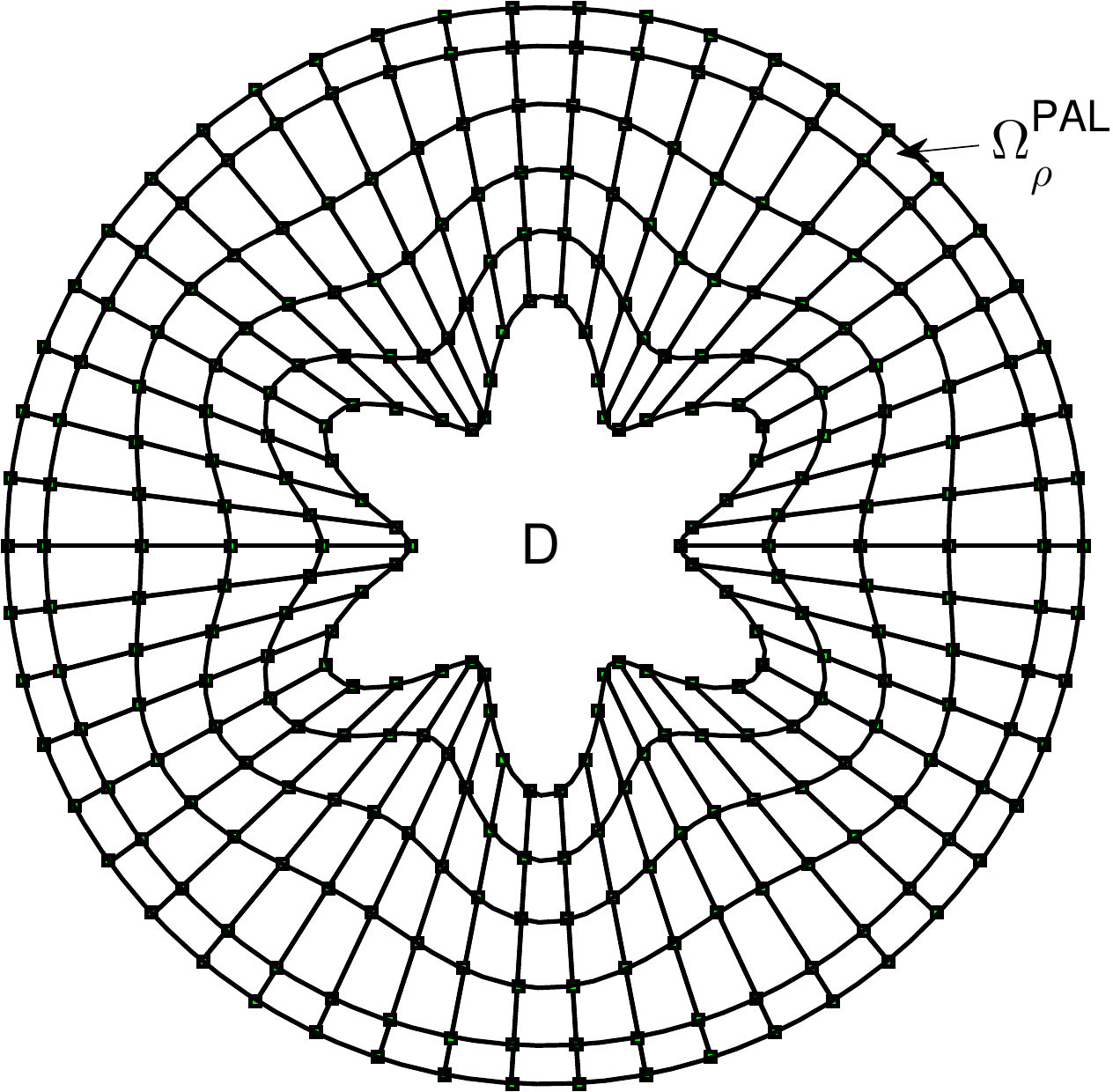}} \qquad 
  \subfigure[${\rm Re}(u)$ and ${\rm Re}(v)$]{ \includegraphics[scale=.36]{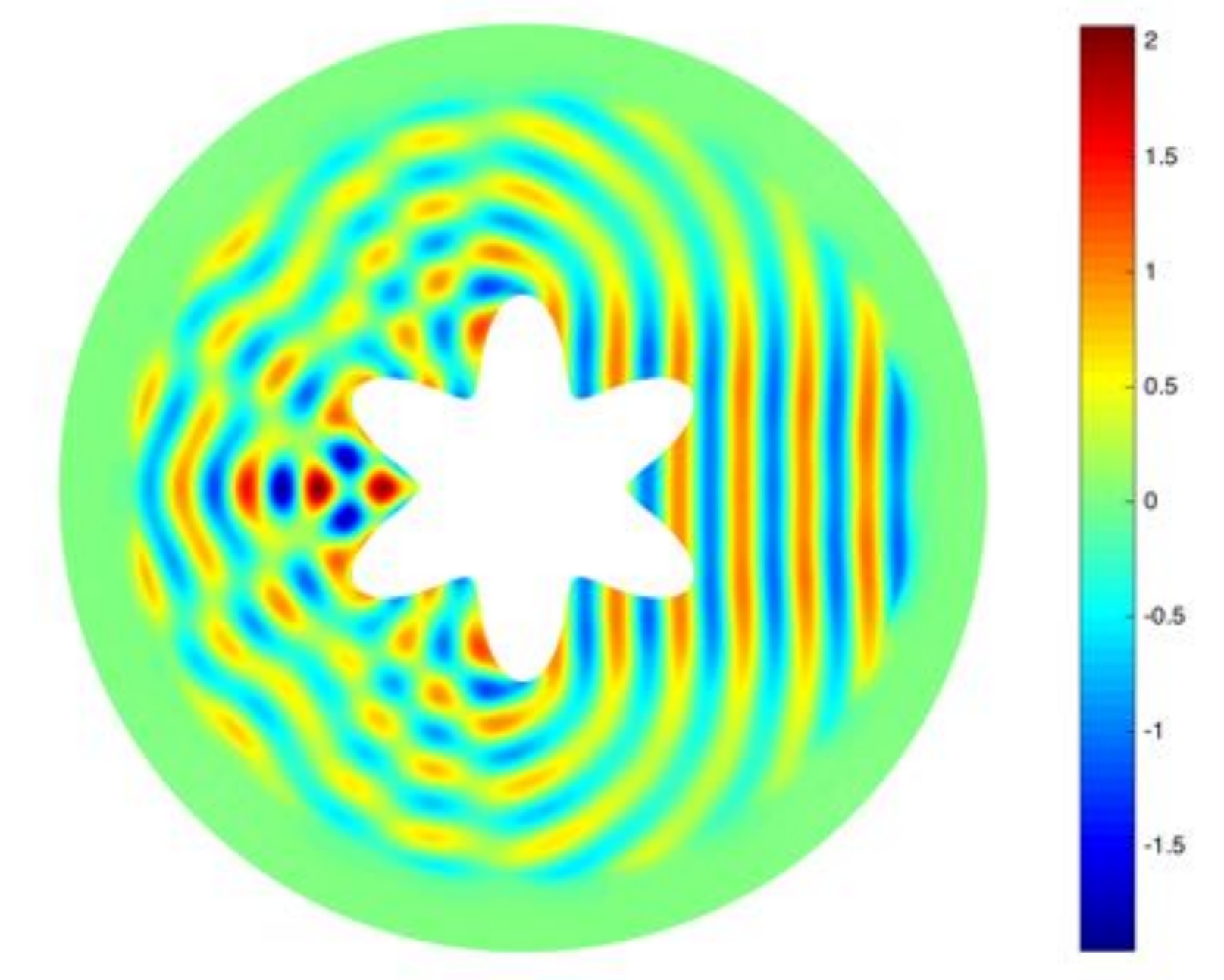}}\\
 \subfigure[errors of (b) against $N$ ]{ \includegraphics[scale=.25]{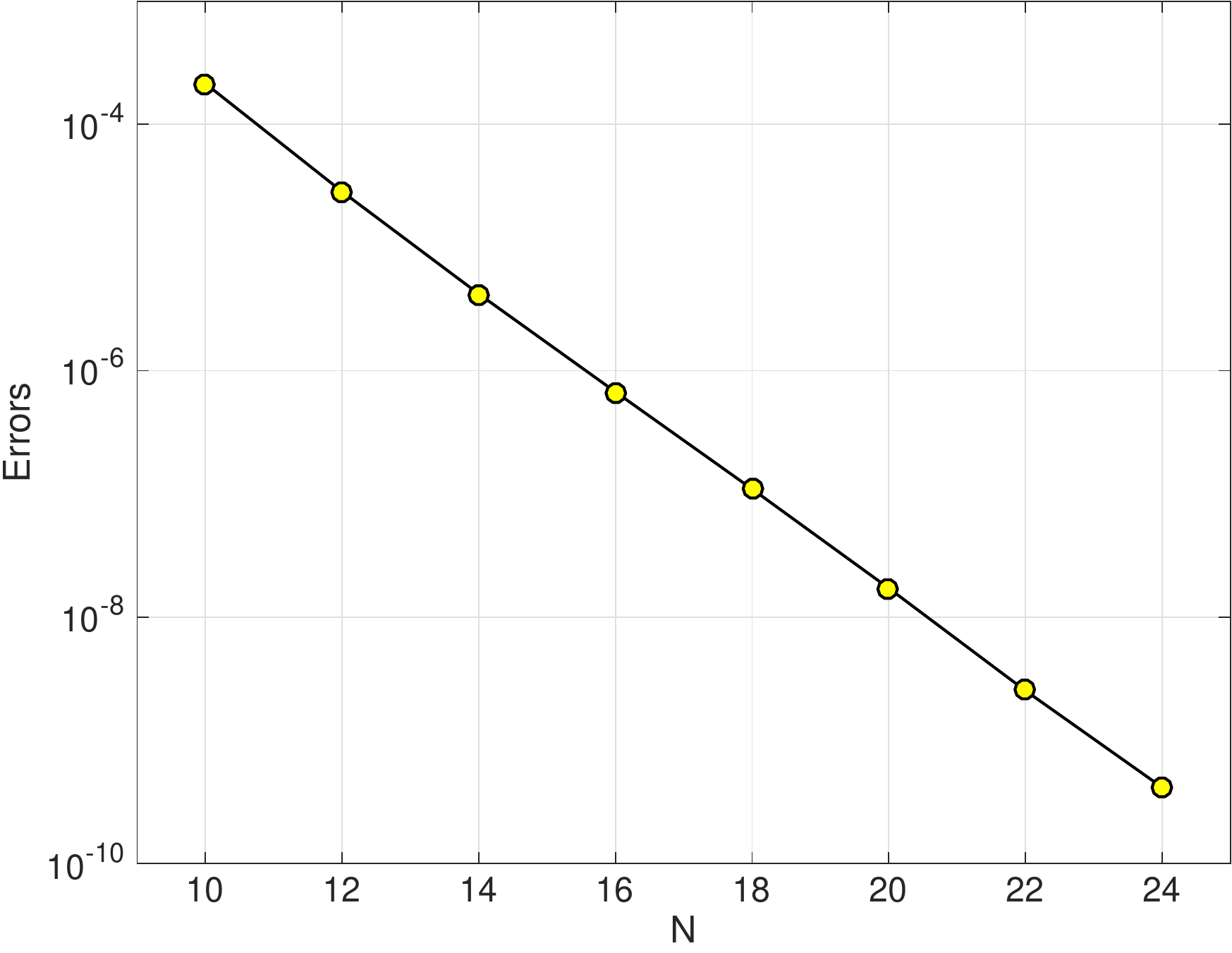}}\quad
 \subfigure[profile along x-axis]{ \includegraphics[scale=.25]{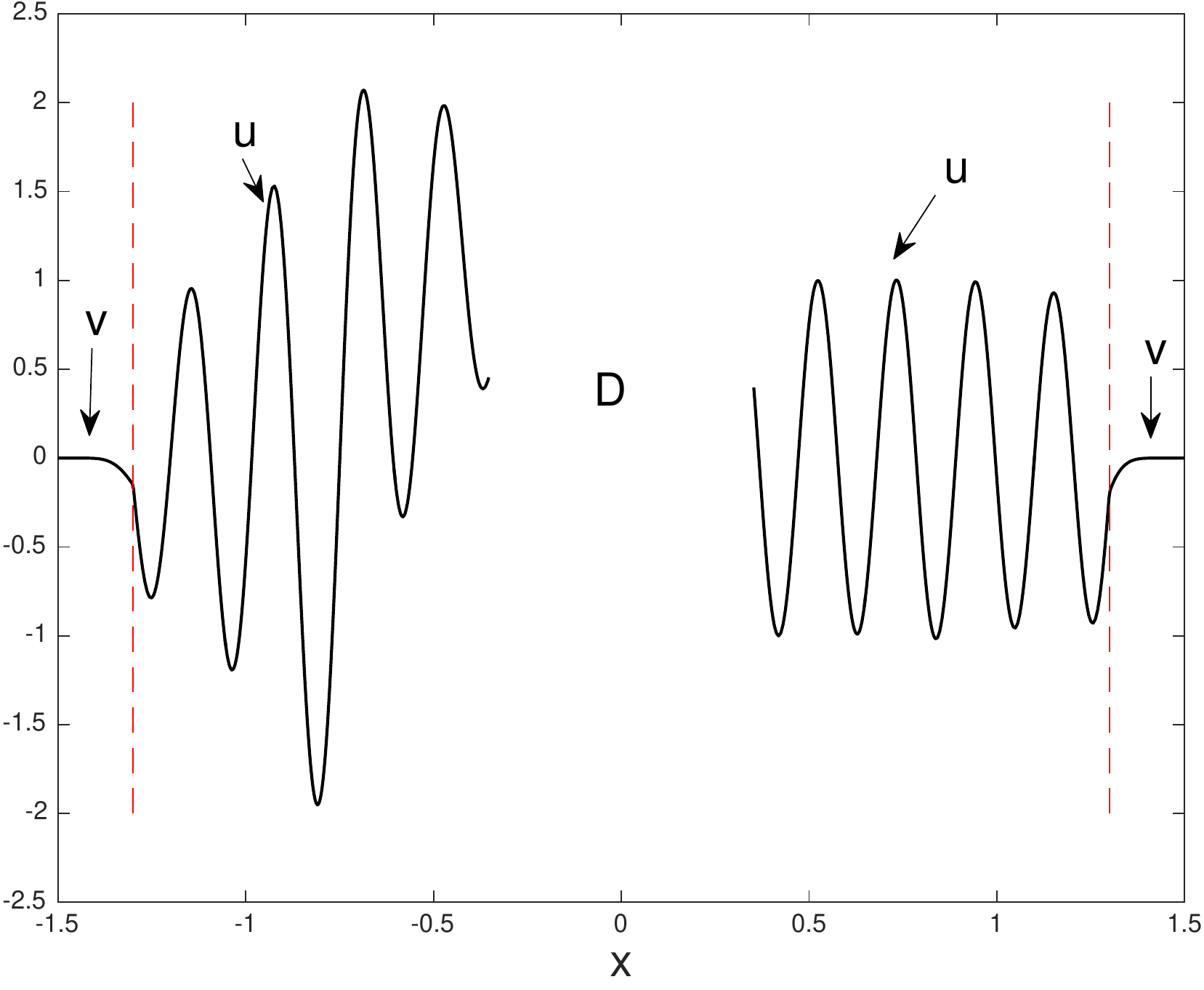}} \quad 
  \subfigure[profile along y-axis]{ \includegraphics[scale=.25]{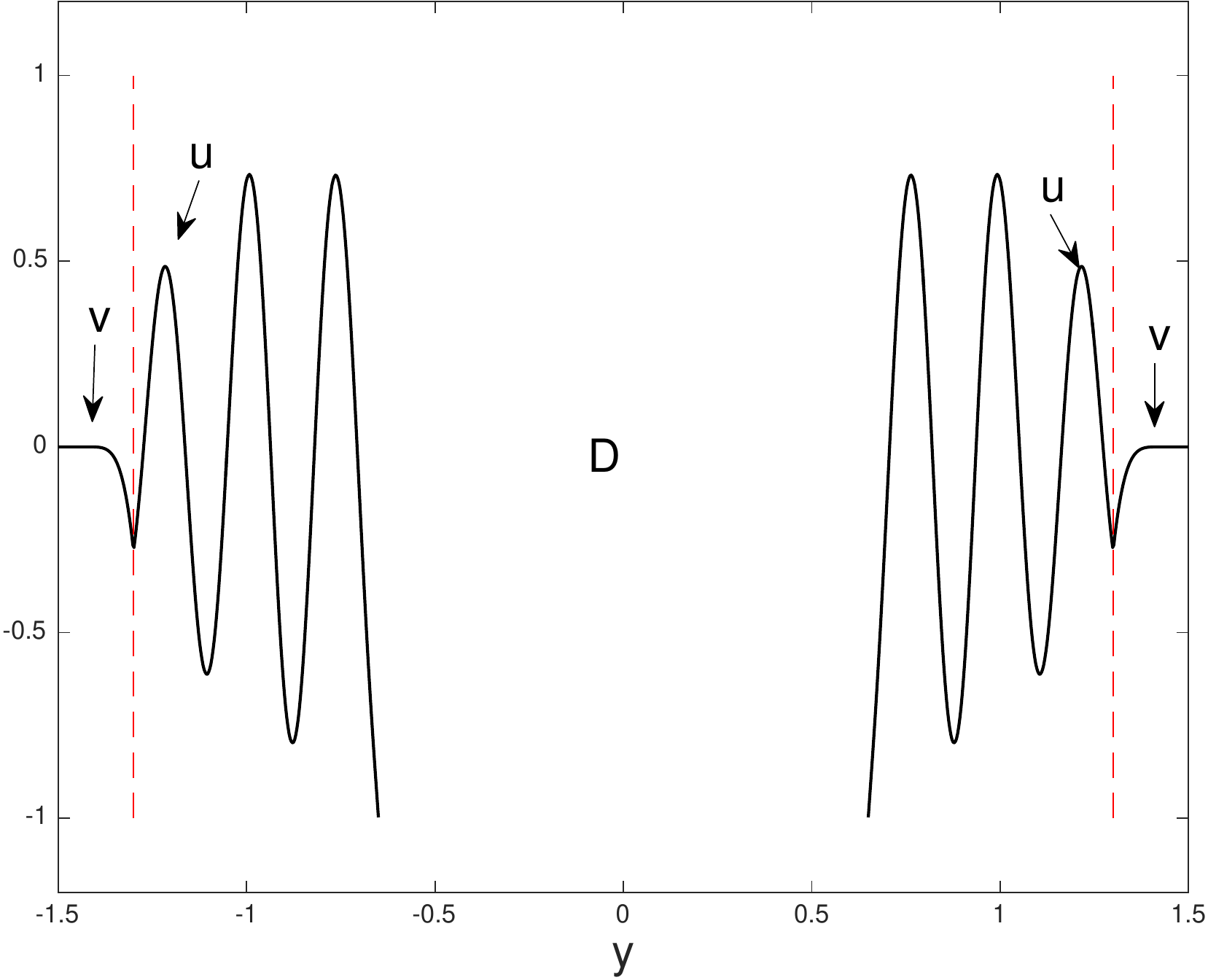}}\\
    \caption{Hexagonal star-shaped scatterer with an circular PAL layer. The simulation results are obtained with $k=30,$ $\theta_0=0,$ $\sigma_0=\sigma_1=1,$ $(R_1, R_2)=(1.3,1.55)$, $N_1=35$ and various $N$.  }
   \label{figs:PALtest1}
\end{center}
\end{figure}

Next, we consider a hexagonal star-shaped scatterer with its boundary radius parameterized by
\begin{equation}\label{eq: R0test1}
R_0(\theta)=0.5+0.15\sin(6(\theta+{\pi}/{4})), \quad \theta=[0,2\pi).
\end{equation} 
The exterior domain is surrounded with an annulus PAL layer with $(R_1, R_2)=(1.3,1.55).$ To numerically stimulate this problem, we discretize the computational domain $\Omega \cup \Omega_{\varrho}^{\rm PAL}$ with 250 non-overlapping quadrilateral elements $\Omega=\{ \Omega^{(i)}\}_{i=1}^{200} $ and $\Omega_{\varrho}^{\rm PAL}=\{ \Omega_{\varrho}^{(i)}\}_{i=1}^{50}$, as shown in Figure \ref{figs:PALtest1} (a). Once again, the spectral-element scheme  is implemented based on Theorem \ref{eq:numerBlinear}. Using the Gordon-Hall elemental  transformation $\{ T^{i}, T_{\varrho}^i\}:  [-1, 1]^2 \mapsto \{\Omega^{(i)},  \Omega_{\varrho}^{(i)}   \},$ we define the approximation space
\begin{equation}\label{finitespace}
u_{\bs N}\in V_{\bs N}=\big\{u\in H^1(\Omega):u |_{\Omega^{(i)}} \circ T^{i} \in \mathbb{P}_{N_1} \times \mathbb{P}_{N_1},\;u|_{\Omega_{\varrho}^{(i)}}=v_{\bs N}w,\; v_{\bs N}|_{\Omega_{\varrho}^{(i)}} \circ T_{\varrho}^{i} \in \mathbb{P}_{N_1} \times \mathbb{P}_{N}\big \}.
\end{equation}
We set $\theta_0=0,$ $k=30,$  and $\sigma_0=\sigma_1=1$.   $N_1=35$ is employed  in all tests, which is large enough  to guarantee  the numerical errors are mainly induced by the approximation error in the PAL layer.  Since the exact solution with irregular scatterer is not available, we adopt the numerical solution obtained by $(N_1,N)=(35,35)$ as a reference solution and the numerical errors are obtained by comparing the numerical solution with this reference solution.  In Figure \ref{figs:PALtest1} (b), we plot  $ \Re\{u_{\bs N}\}|_{\Omega}$ and $\Re\{v_{\bs N}\}|_{\Omega_{\varrho}^{\rm PAL}}$ with $(N_1,N)=(35,15)$.  We plot  the maximum errors  in $\Omega$ against $N$ in Figure \ref{figs:PALtest1} (c). 
Observe that the errors decay exponentially as  $N$ increases, and the approximation in the layer has no oscillation and is well-behaved, as shown by  the profiles along $x$-and $y$-axis of the numerical solution in Figure \ref{figs:PALtest1} (d)-(e).

\subsection{Hexagonal star-shaped layer}
\begin{figure}[t!]
\begin{center}
 \subfigure[illustration of mesh grids]{ \includegraphics[scale=.36]{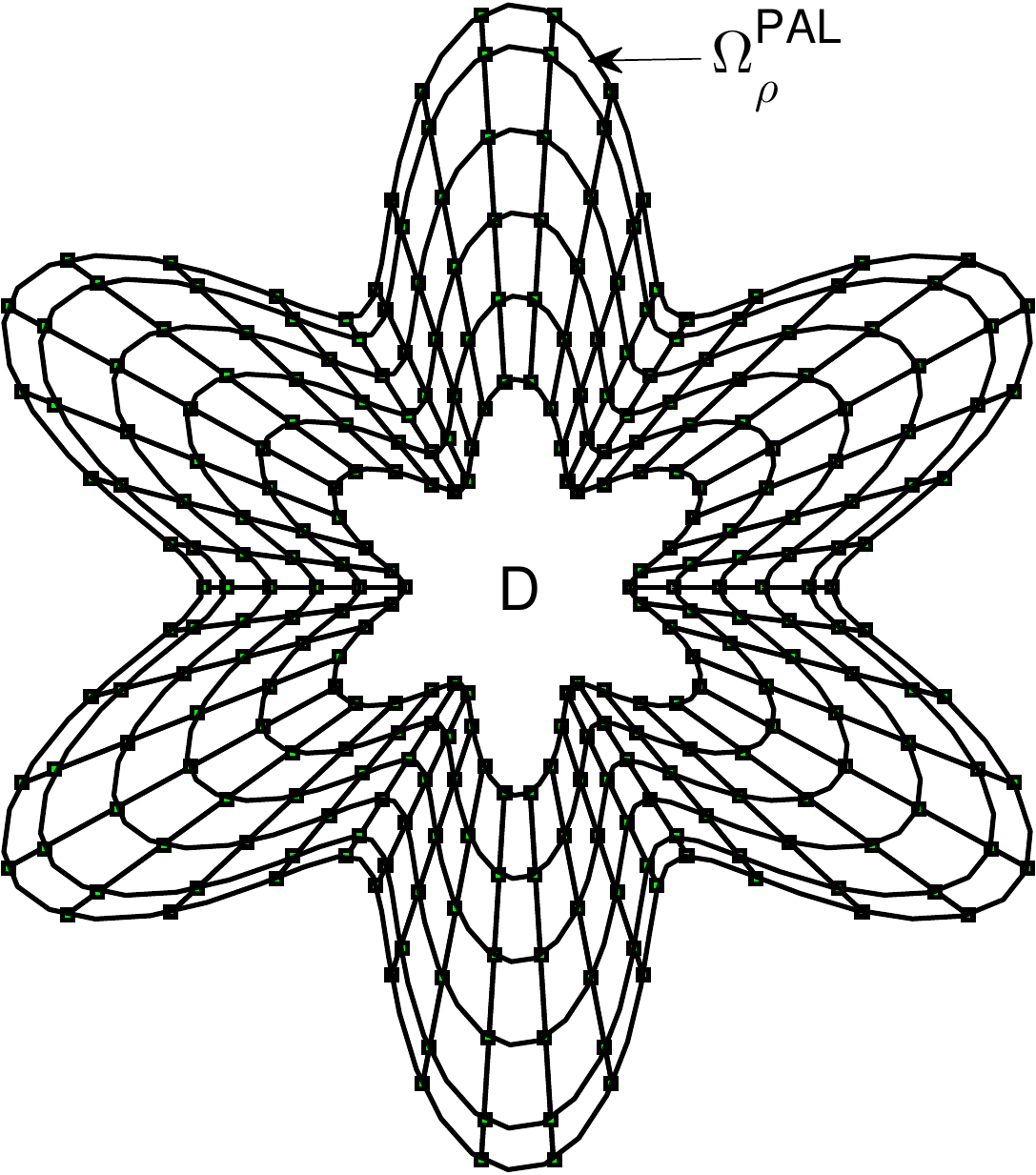}} \qquad 
  \subfigure[${\rm Re}(u)$ and ${\rm Re}(v)$]{ \includegraphics[scale=.36]{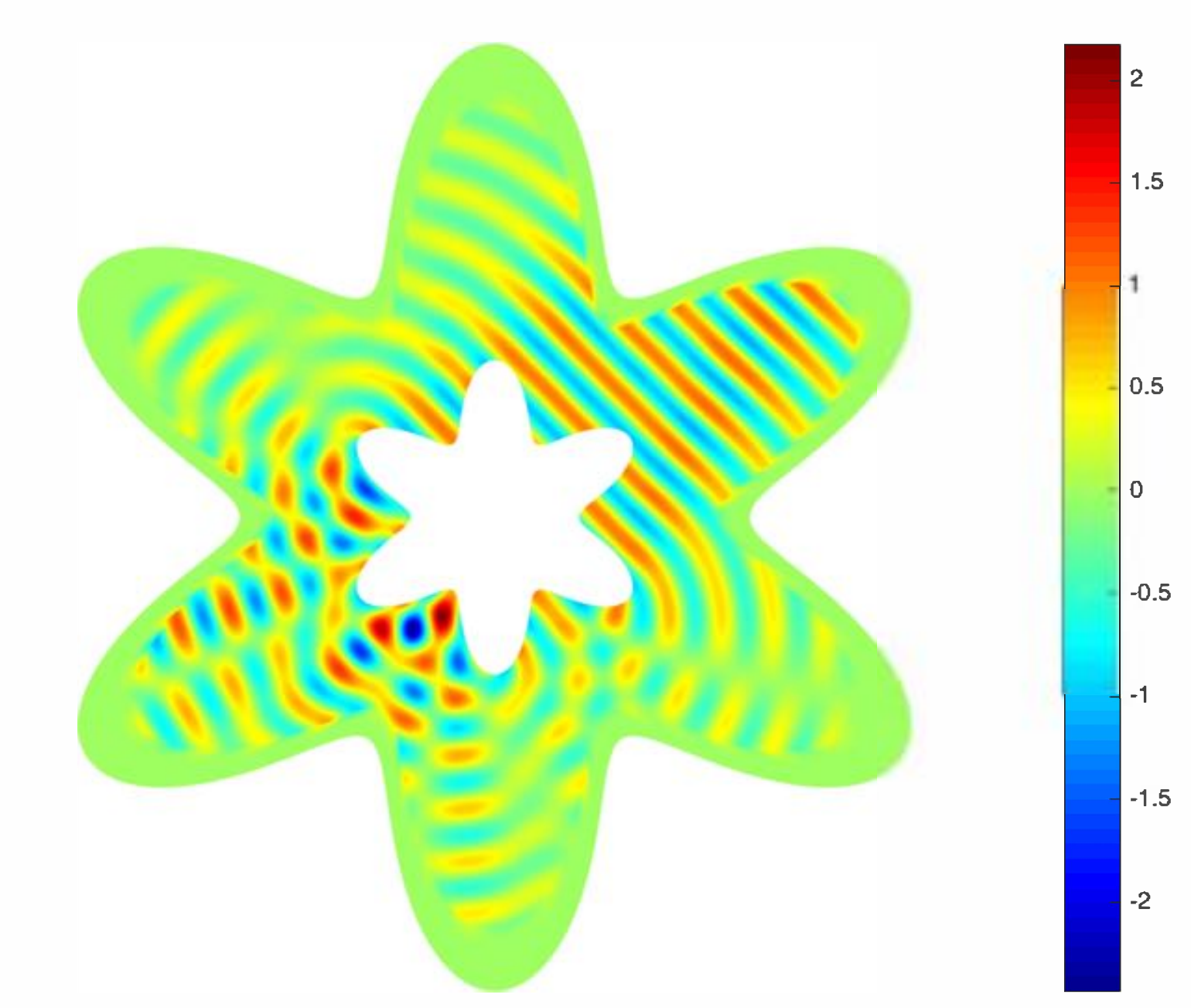}}\\
 \subfigure[errors of (b) against $N$ ]{ \includegraphics[scale=.25]{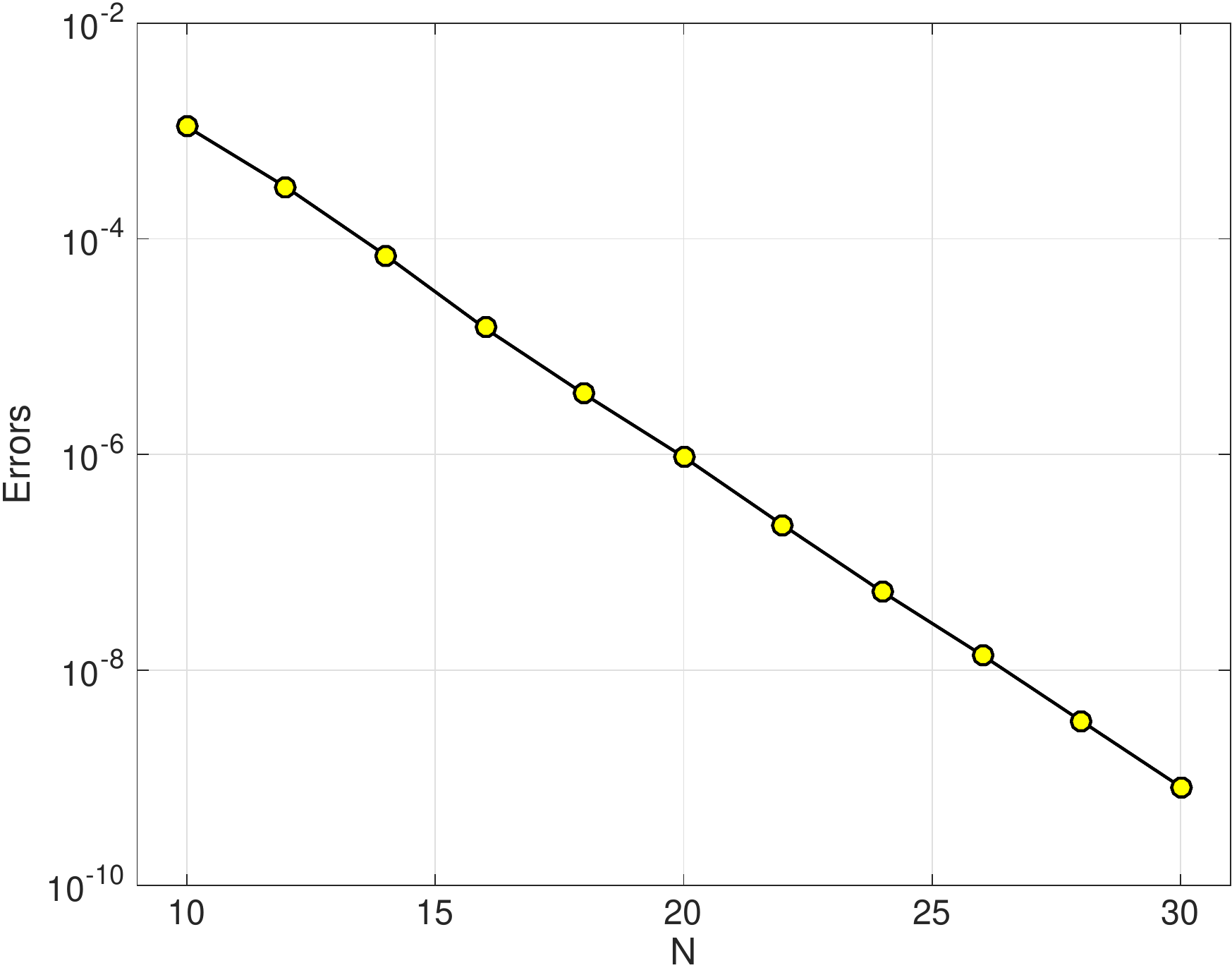}}\quad
 \subfigure[profile along x-axis]{ \includegraphics[scale=.25]{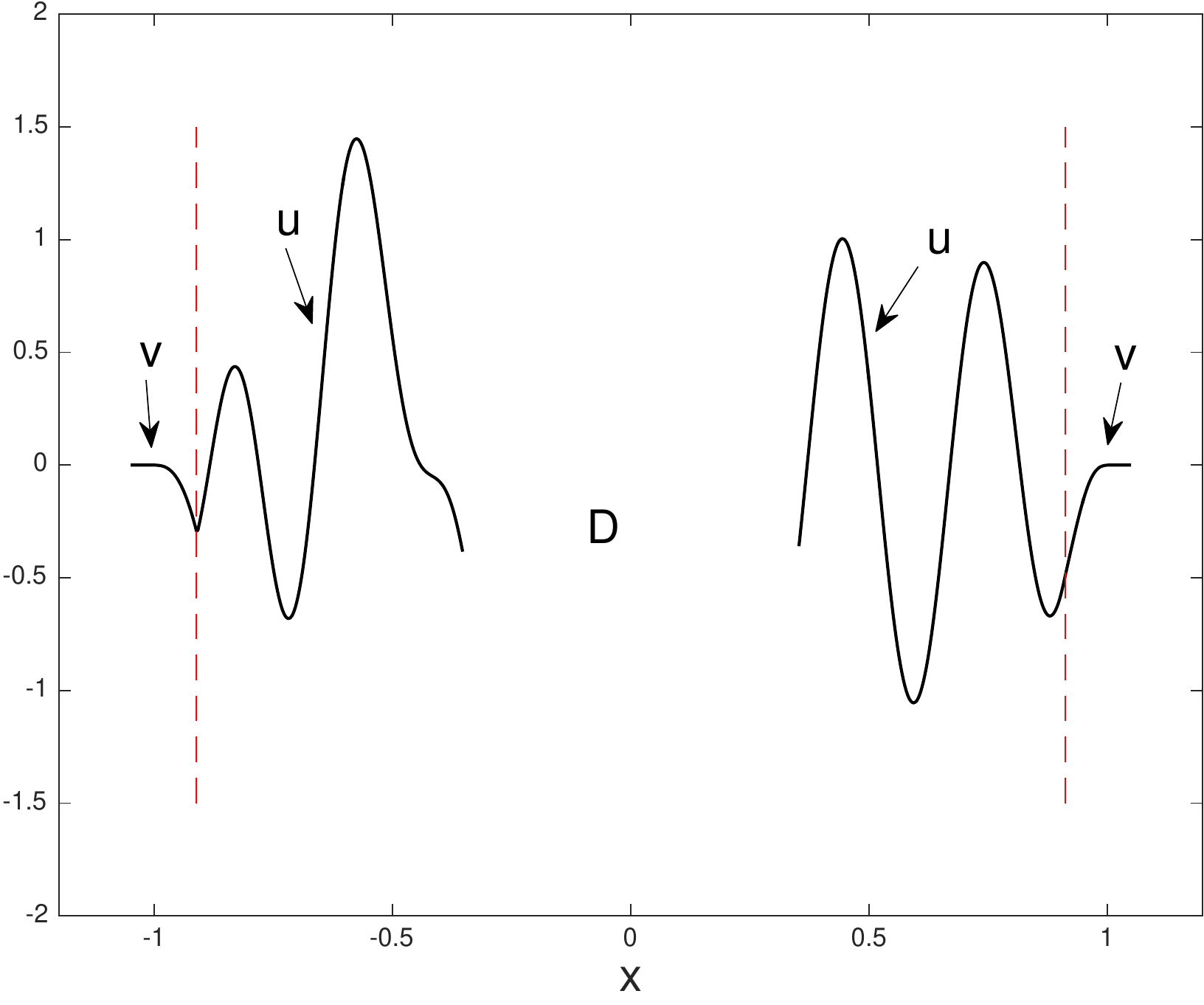}} \quad 
  \subfigure[profile along y-axis]{ \includegraphics[scale=.25]{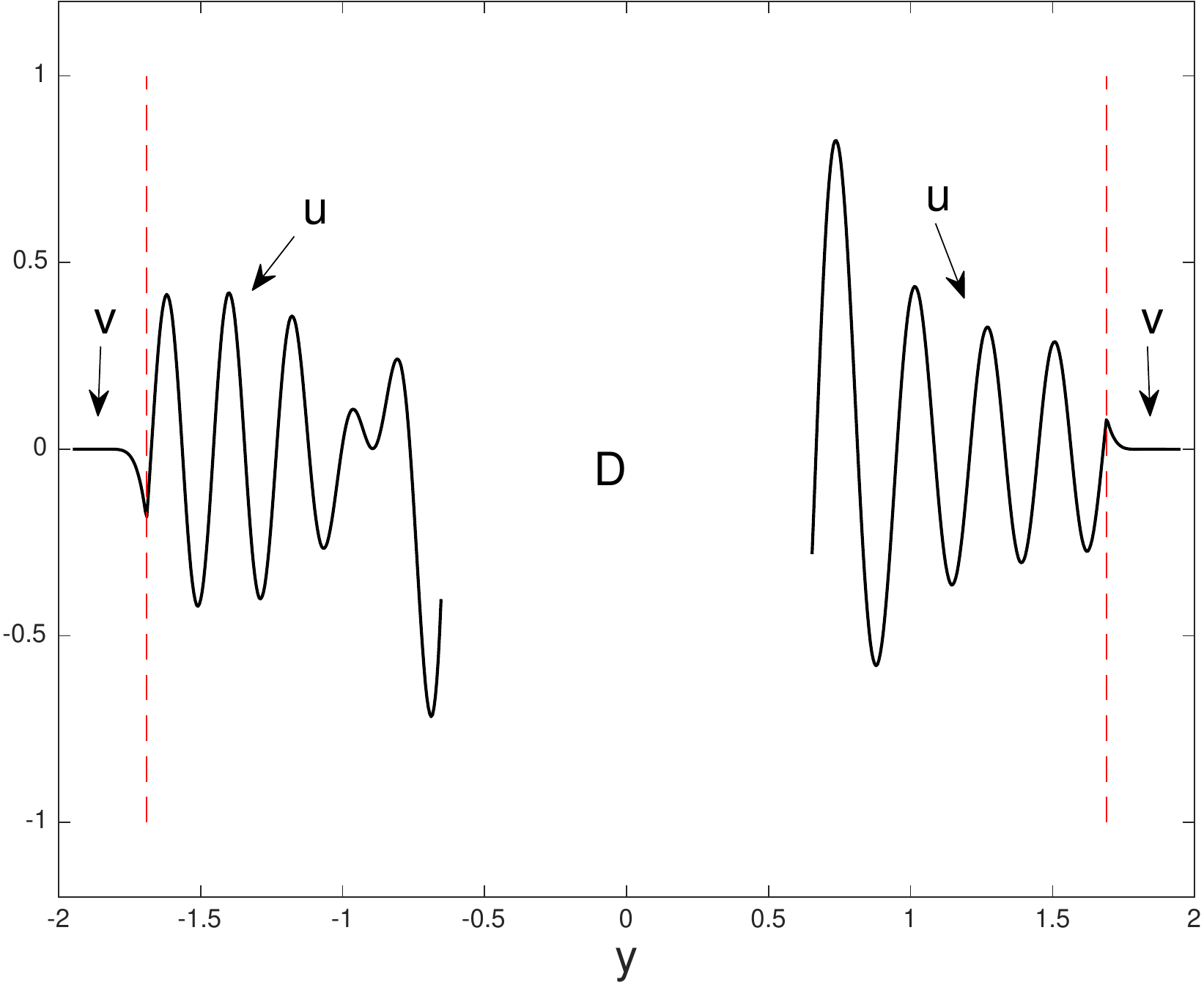}}\\
    \caption{Hexagonal star-shaped scatterer and PAL layer. The simulation results are obtained with $k=30,$ $\theta_0=\pi/4,$ $\sigma_0=\sigma_1=1,$ $R_1(\theta)=2.6R_0(\theta)$ and $R_2(\theta)=3R_0(\theta)$ with $R_0(\theta)$ defined in \eqref{eq: R0test1}, $N_1=35$ and various $N$. }
   \label{figs:PALtest2}
\end{center}
\end{figure}
Next, we surround the same scatterer in the previous example with  an hexagonal star-shaped layer, i.e., the parameterized form for the inner and outer radius for the layer are $R_1(\theta)=2.6R_0(\theta)$ and $R_2(\theta)=3R_0(\theta)$ with $R_0(\theta)$ defined in \eqref{eq: R0test1}. We set $k=30$ and the incident angle $\theta_0=\pi/4.$ We partition the computational domain into 250 quadrilateral curvilinear spectral elements, as illustrated in Figure \ref{figs:PALtest2}. We depict  $\Re\{u_{\bs N}\}|_{\Omega}$ and $ \Re\{v_{\bs N}\}|_{\Omega_{\varrho}^{\rm PAL}}$ with $(N_1,N)=(35,15)$ in Figure \ref{figs:PALtest2} (b). The maximum error in $\Omega$ against $N$ is shown in Figure \ref{figs:PALtest2} (c). It is evident that the errors decrease exponentially with increased polynomial degree $N$, displaying an exponential convergence rate.  Figure \ref{figs:PALtest2} (d)-(e) show the profiles of the numerical solution in Figure \ref{figs:PALtest2}(b) along $x$-and $y$-axis. We observe that the   solution profile in the PAL layer smoothly decreases to zero without any oscillation. 

\subsection{Elliptical layer} 
\begin{figure}[t!]
\begin{center}
 \subfigure[illustration of mesh grids]{ \includegraphics[scale=.28]{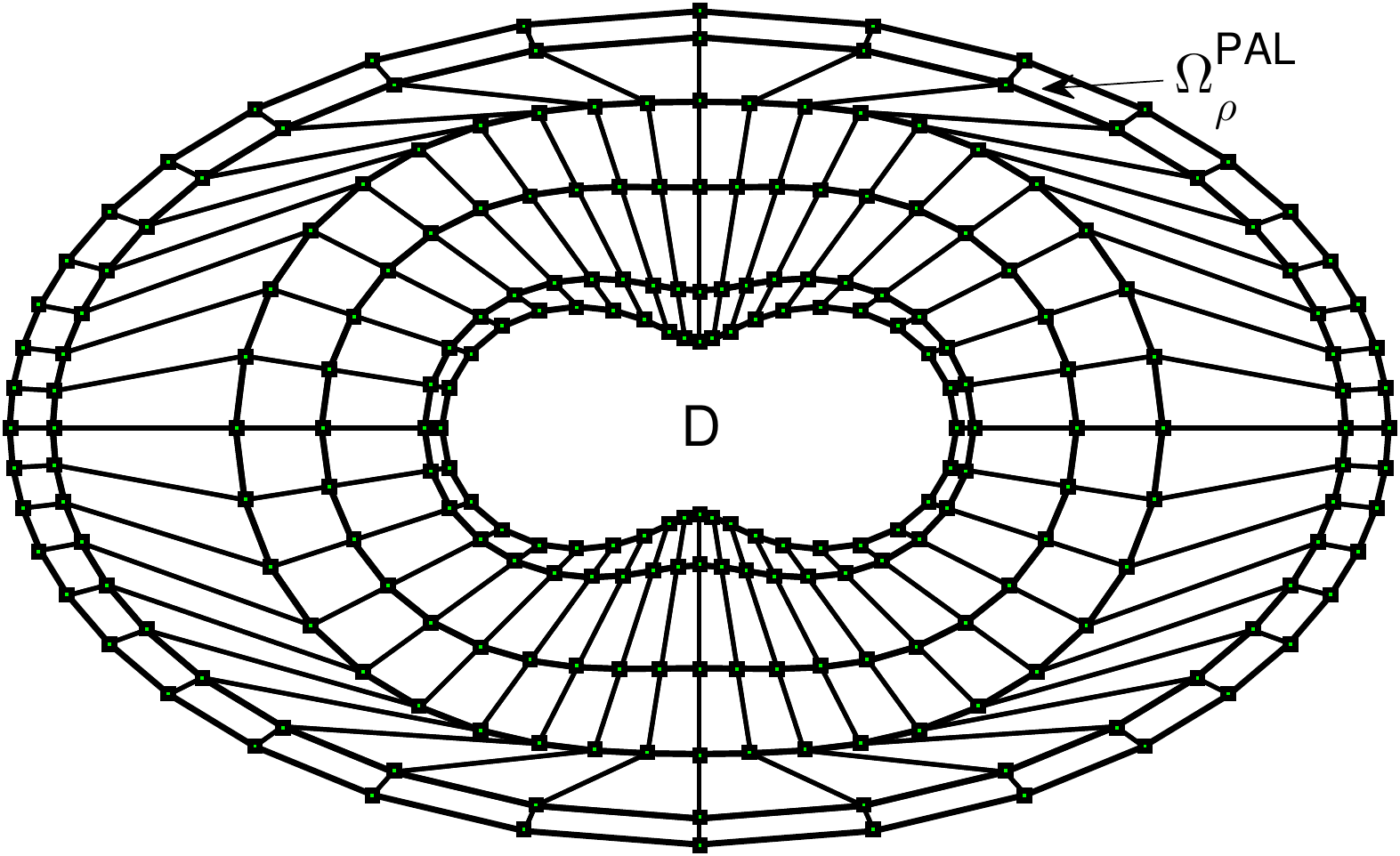}} \qquad 
  \subfigure[${\rm Re}(u)$ and ${\rm Re}(v)$]{ \includegraphics[scale=.32]{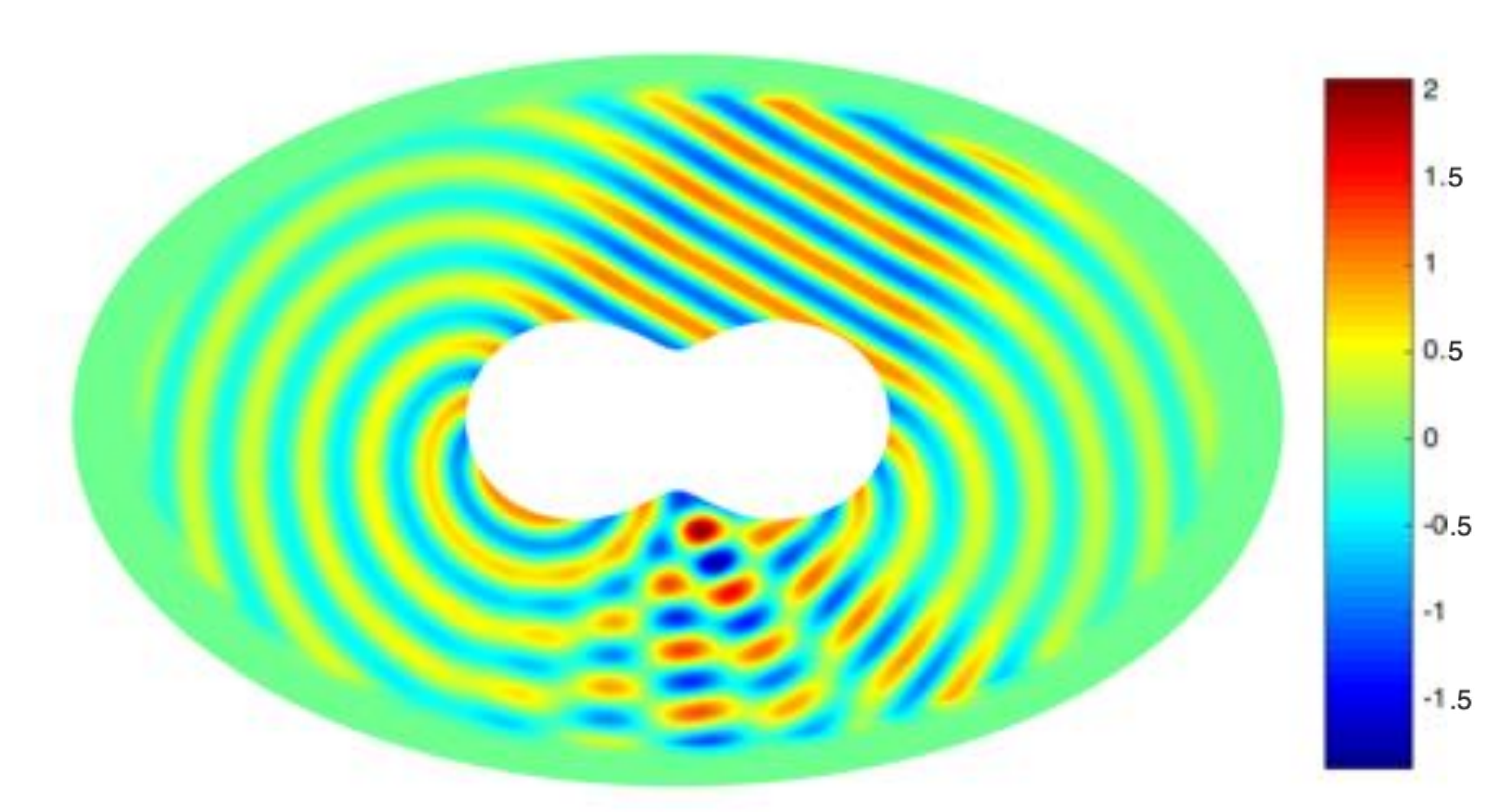}}\\
 \subfigure[errors of (b) against $N$  ]{ \includegraphics[scale=.23]{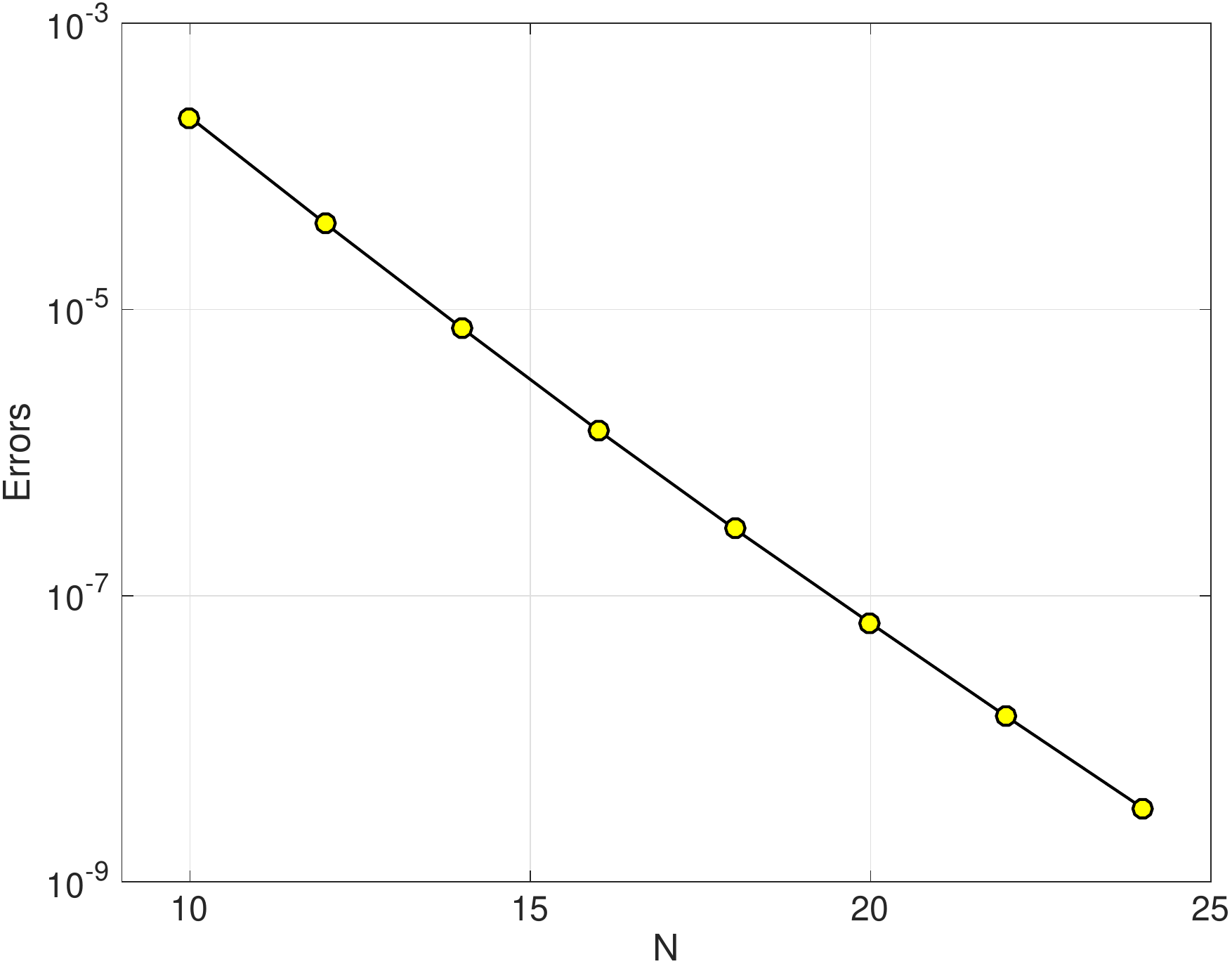}}\quad
 \subfigure[profile along x-axis]{ \includegraphics[scale=.23]{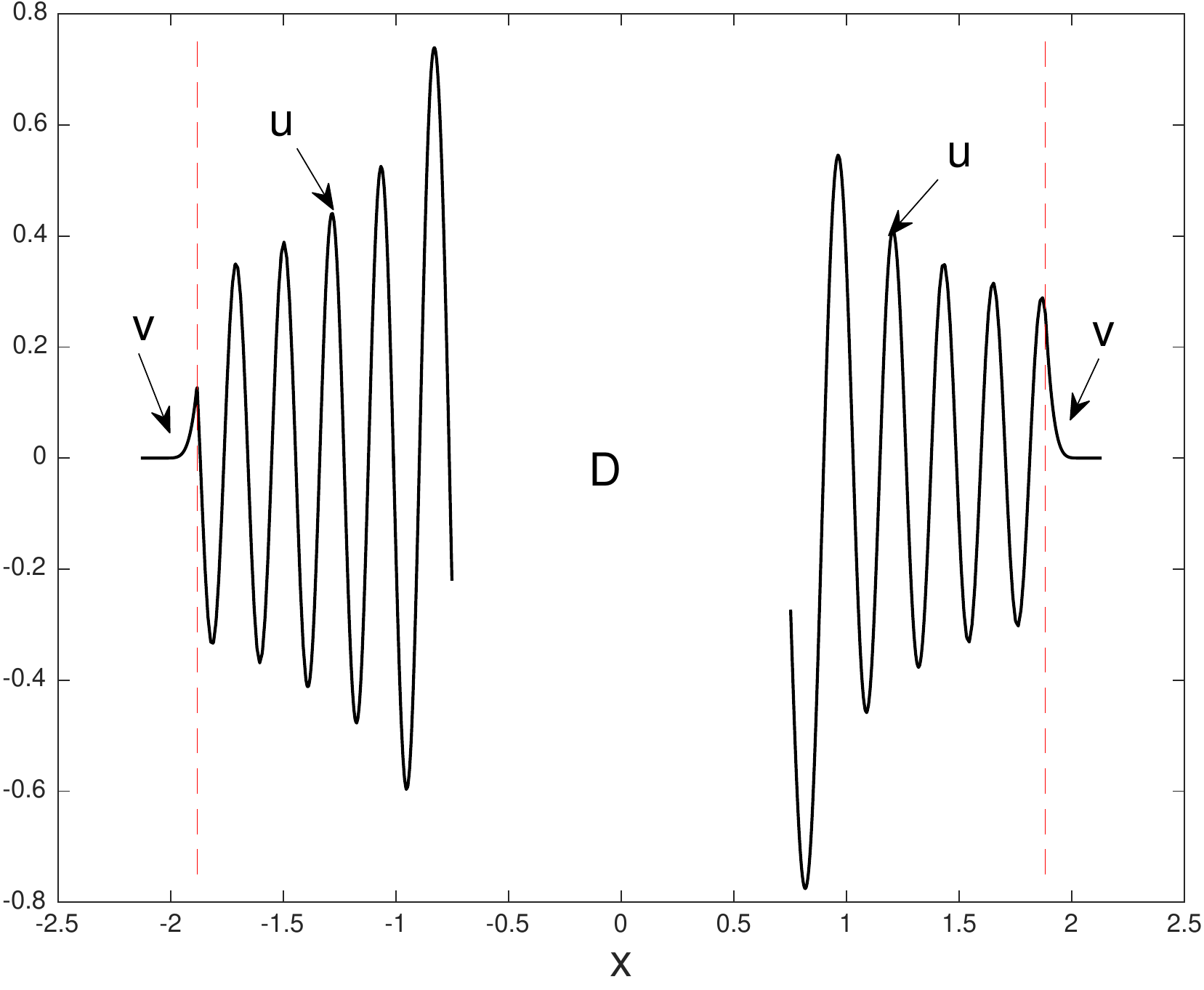}} \quad 
  \subfigure[profile along y-axis]{ \includegraphics[scale=.23]{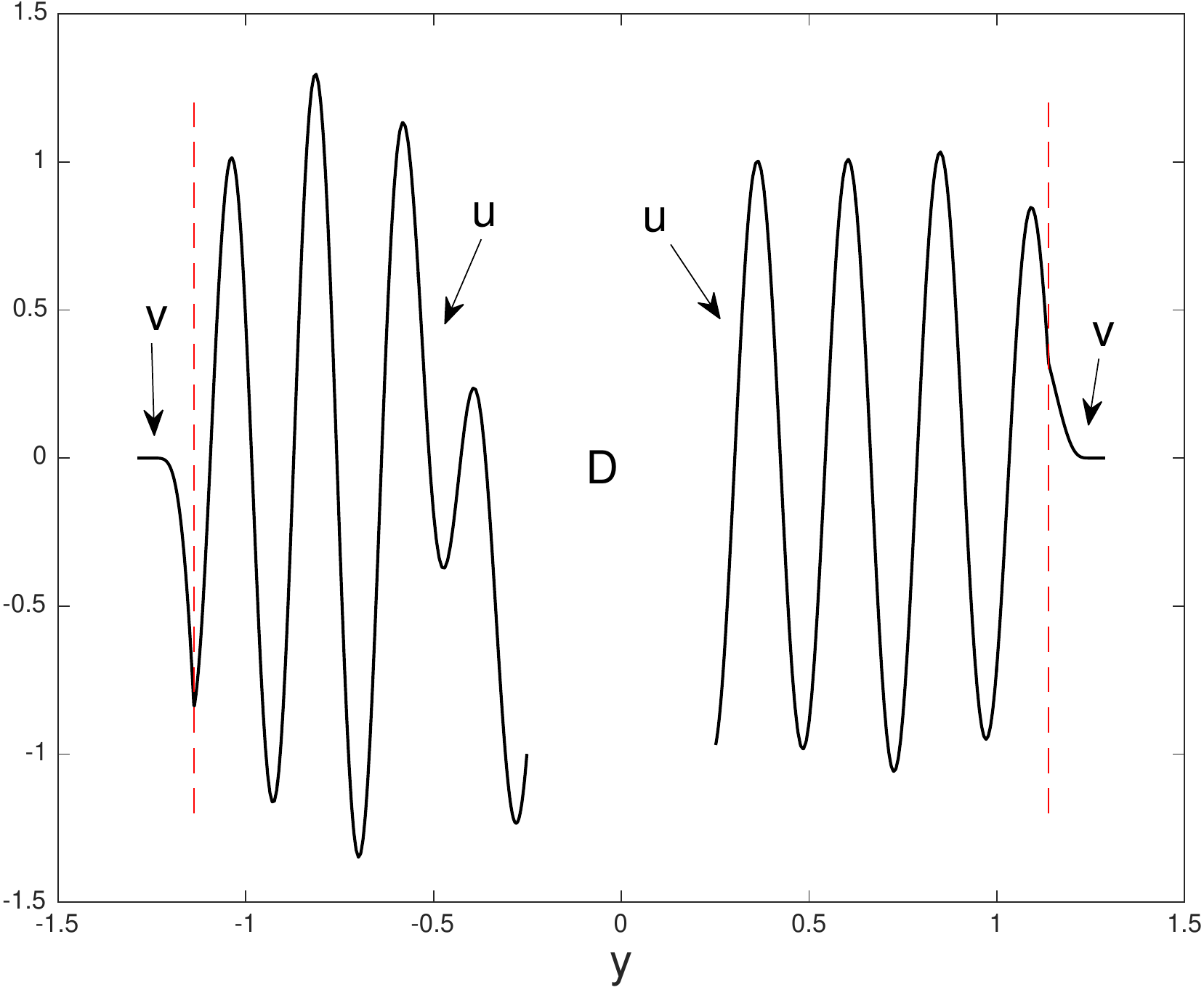}}\\
    \caption{Peanut-shaped scatterer with elliptical PAL layer. The simulation results are obtained with $k=30,$ $\theta_0=\pi/3,$ $\sigma_0=\sigma_1=1,$ $R_0(\theta)$ and $R_1(\theta)$ are computed based on \eqref{eq: R0test3} and \eqref{ellitpitic} and $R_2(\theta)=\frac{17}{15}R_1(\theta)$, $N_1=35$ and various $N$.}
   \label{figs:PALtest3}
\end{center}
\end{figure}
The geometries of the PAL layer and the scatterer can be rather general.  Consider a peanut-shaped scatterer with its boundary radius parameterized by
\begin{equation}\label{eq: R0test3}
R_0(\theta)=0.5+0.25\sin(2(\theta+{\pi}/{4})), \quad \theta=[0,2\pi).
\end{equation} 
The exterior domain is truncated with an elliptical PAL layer with $\Gamma_{\!R_1}$ to be an ellipse: $\frac{x^2}{a^2}+\frac{y^2}{b^2}=1$ with $a>b>0,$  and $a$ and $b$ take the form
\begin{equation*}
a=1.5\cosh(0.7),\quad b=1.5\sinh(0.7).
\end{equation*} 
Then $R_1(\theta)$ can be computed based on \eqref{ellitpitic} and we let $R_2(\theta)=\frac{17}{15}R_1(\theta)$. We set $k=30$ and $\theta_0=\pi/3.$ The numerical scheme are implemented based on Theorem \ref{thm: implem}. The computational domain is partitioned by 200 spectral elements shown in Figure \ref{figs:PALtest3} (a). $\Re\{u_{\bs N}\}|_{\Omega}$ and $\Re\{v_{\bs N}\}|_{\Omega_{\varrho}^{\rm PAL}}$ obtained with $(N_1,N)=(35,15)$ are plotted in Figure \ref{figs:PALtest3}. Maximum error against $N$ and the profiles of the numerical solution along $x$-and $y$-axis are depicted, respectively in Figure \ref{figs:PALtest3} (c)-(f). Due to the well-behaved and non-oscillatory nature of the solution in the PAL layer, the error history exhibits an exponential convergence rate.

\subsection{Rectangular layer}Next, we surround the scatterer in the previous example with a rectangular PAL layer with the boundary $\Gamma_{\!R_1}$ as a square with four vertices:
$(a,b), (-a,b), (-a,-b), $  $ (a,-b) $ with  $ a=1.5$ and $b=0.75.$ Thus, $R_1(\theta)$ can be computed by \eqref{rect-Omega} and we let $R_2(\theta)=\frac{17}{15}R_1(\theta).$ In this simulation, the wavenumber and incident angle are set to be $k=30$ and $\theta_0=\pi/3$. The domain of interest is discretized into 200 spectral elements, as depicted in Figure \ref{figs:PALtest4} (a). We plot $\Re\{u_{\bs N}\}|_{\Omega}$ and $\Re\{v_{\bs N}\}|_{\Omega_{\varrho}^{\rm PAL}}$ obtained with $(N_1,N)=(35,15),$ the maximum error compared with the reference solution obtained with $N_1=N=35$, and the profiles of the numerical solution along $x$- and $y$-axis in Figure \ref{figs:PALtest4} (b)-(f). And it can be observed that the error decreases exponentially as $N$ increases and the solution in the rectangular PAL layer are well-behaved and smoothly decreases to zero with non-oscillatory profiles.  

It is also possible to simulate the exterior scattering problem with locally inhomogeneous medium. All the numerical settings are the same except the refraction index $n(\bs x)$ in \eqref{thm: implem} in $\Omega$ are replaced by a shifted Guassian function 
\begin{equation}\label{eq:inhommedium}
n(\bs x)=1+c_0 \exp\Big(-\frac{(x-x_0)^2+(y-y_0)^2}{2c_1^2} \Big).
\end{equation}
The inhomogeneous refraction index is depicted in Figure \ref{figs:PALtest5} (a). Similarly, we demonstrate the real part of the solution in Figure \ref{figs:PALtest5} (b). Observe that compared with Figure \ref{figs:PALtest4} (b), the oscillation of the solution field above the upper-middle region of the peanut scatterer increases, due to the influence of the inhomogeneity therein. 
The maximum error and the profiles of the numerical solution along $x$-and $y$- axis are depicted in Figure \ref{figs:PALtest5} (c)-(f), respectively. We conclude that the proposed PAL technique is accurate and robust for various scatterers and with locally inhomogeneous medium.

\begin{figure}[htbp]
\begin{center}
 \subfigure[illustration of mesh grids]{ \includegraphics[scale=.32]{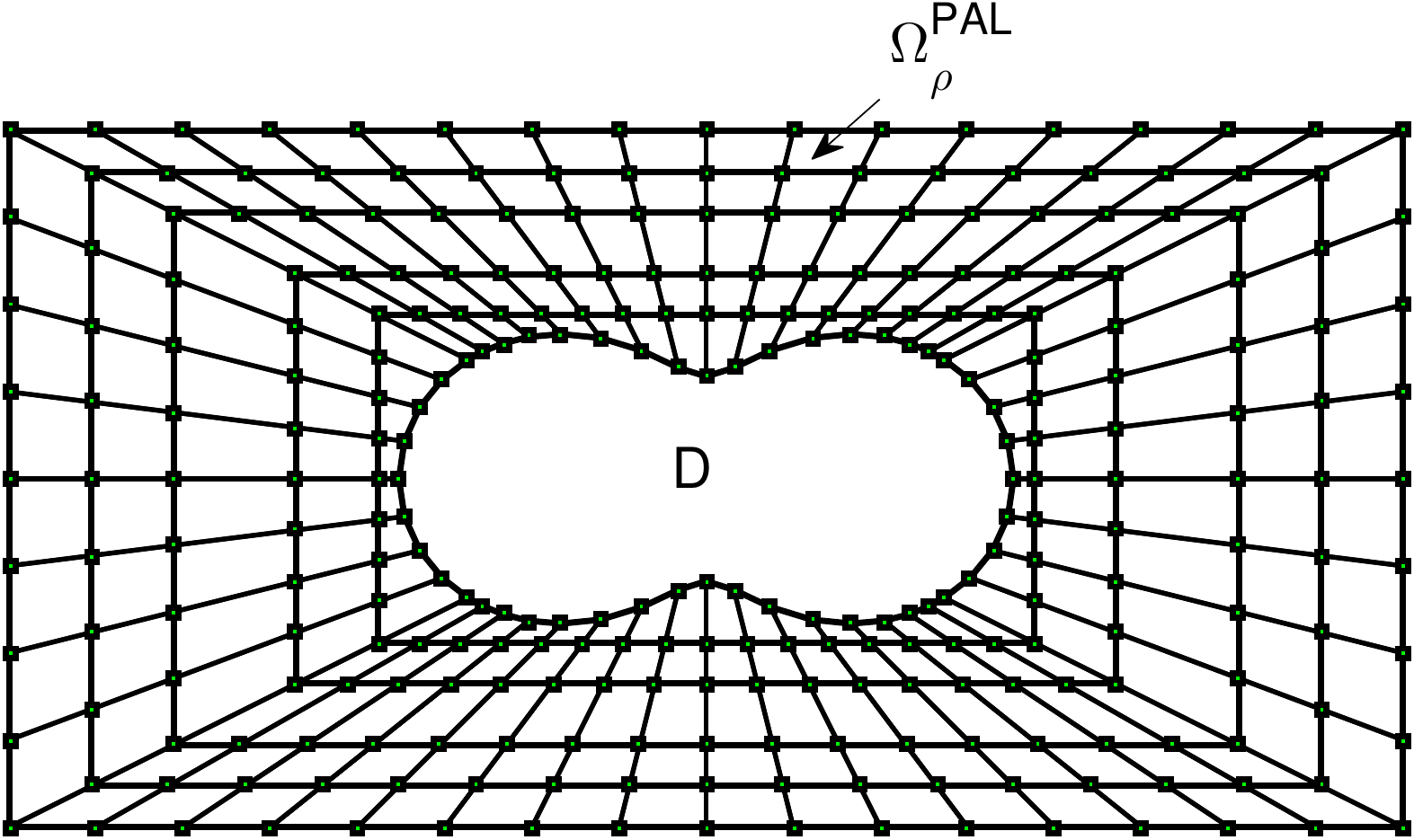}} \qquad 
  \subfigure[${\rm Re}(u)$ and ${\rm Re}(v)$]{ \includegraphics[scale=.35]{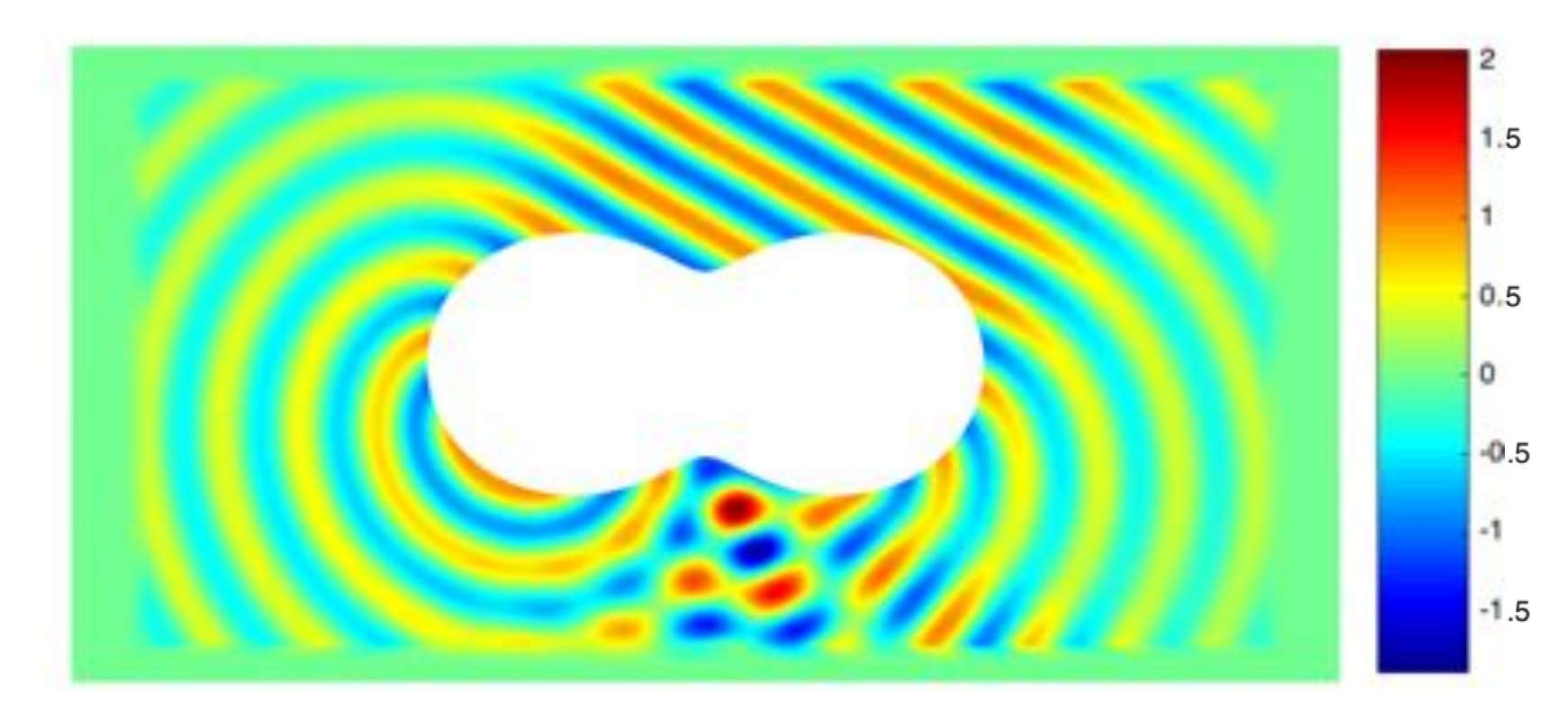}}\\
 \subfigure[ errors of (b) against $N$  ]{ \includegraphics[scale=.25]{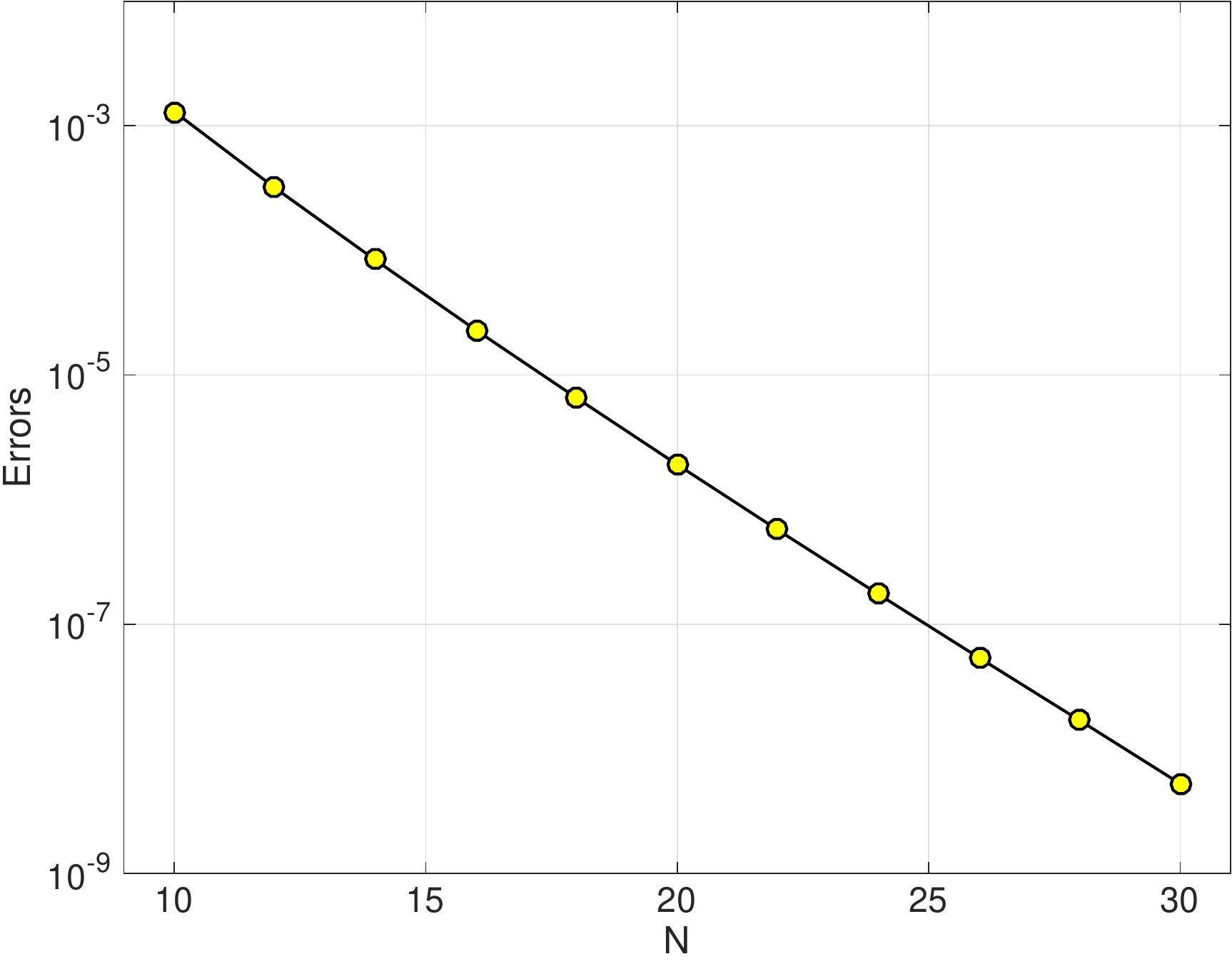}}\quad
 \subfigure[profile along x-axis]{ \includegraphics[scale=.25]{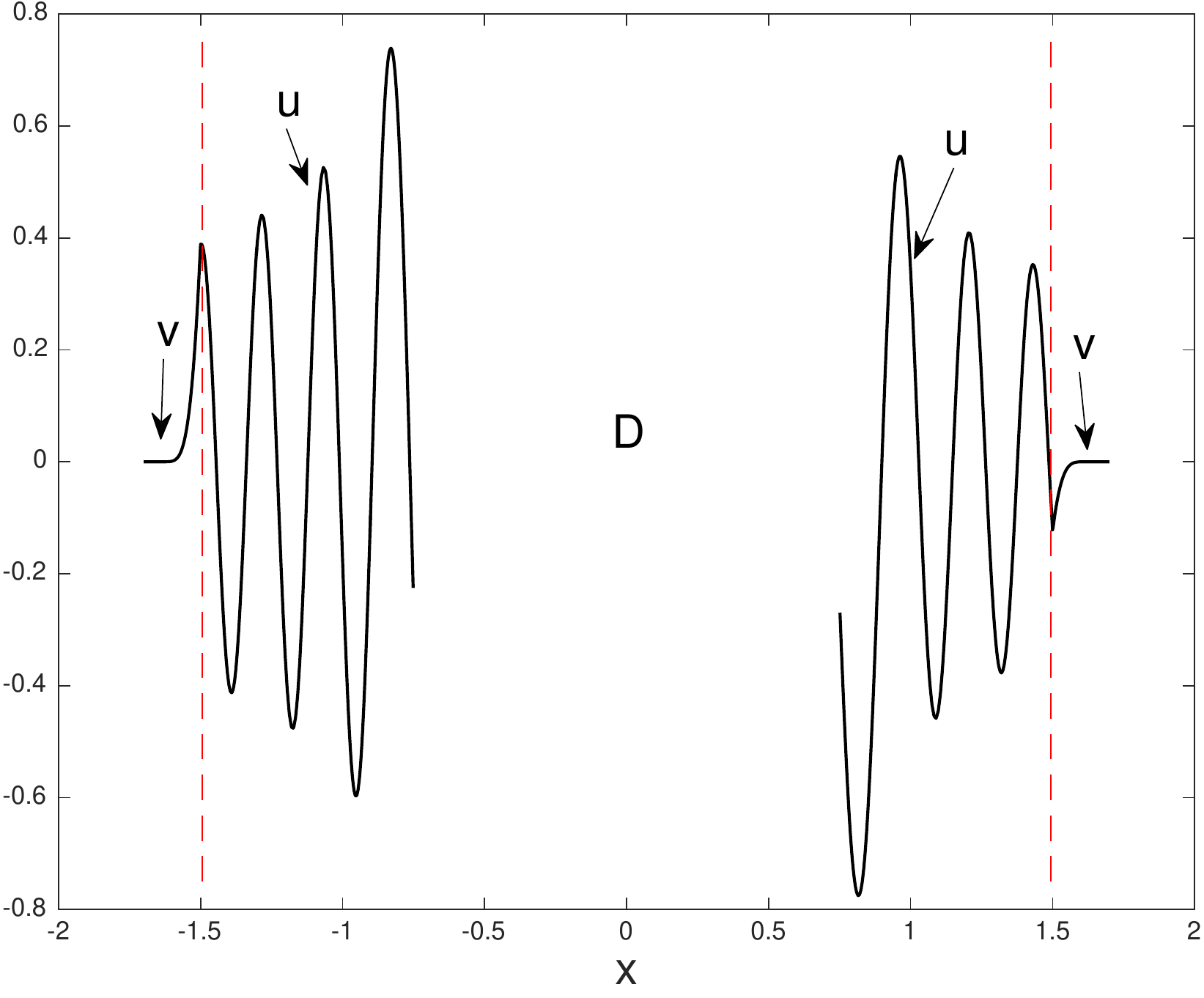}} \quad 
  \subfigure[profile along y-axis]{ \includegraphics[scale=.25]{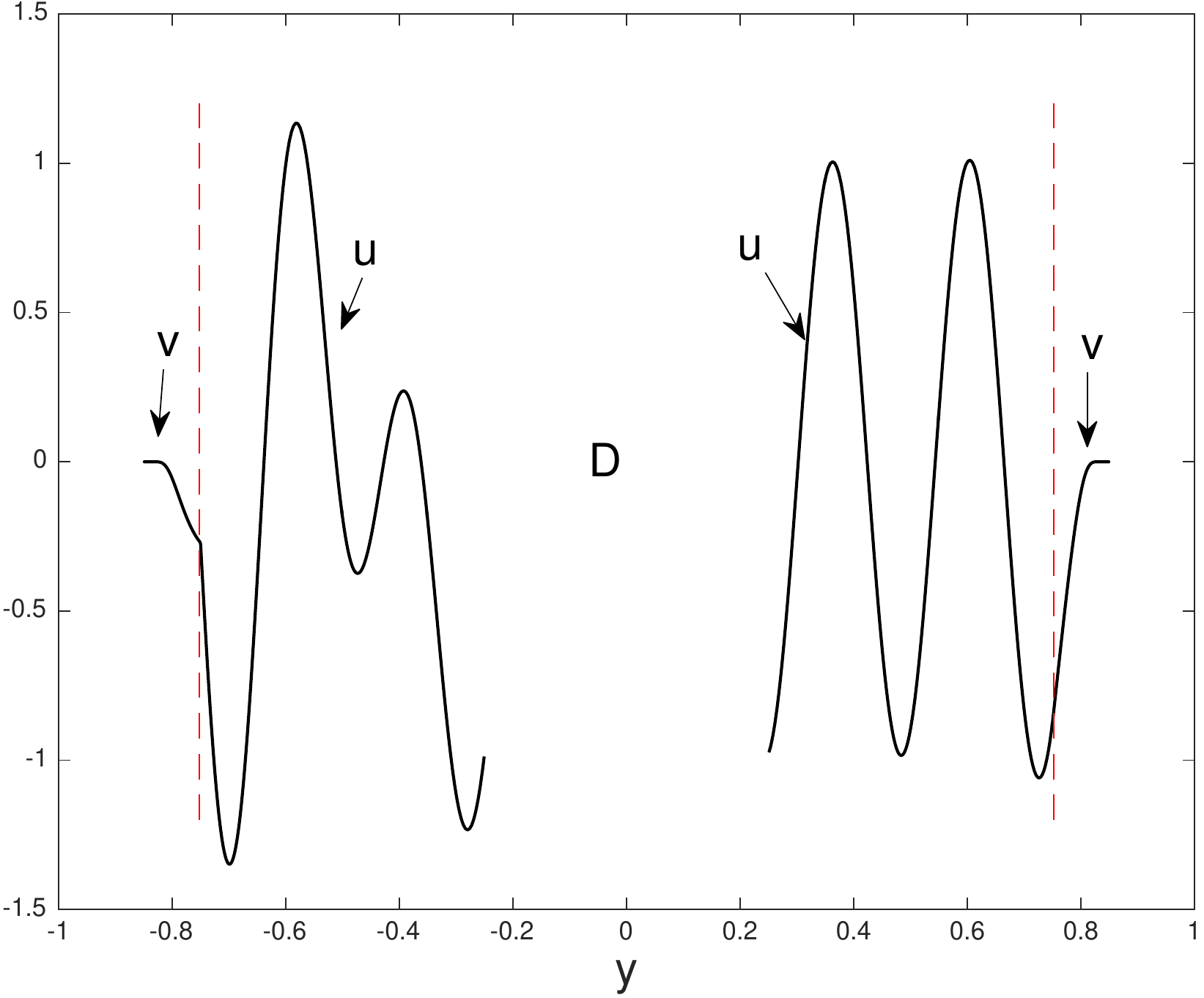}}\\
    \caption{Peanut-shaped scatterer with a rectangluar PAL layer. The simulation results are obtained with $k=30,$ $\theta_0=\pi/3,$ $\sigma_0=\sigma_1=1,$ $R_0(\theta)$ and $R_1(\theta)$ are computed based on \eqref{eq: R0test3} and \eqref{rect-Omega} and $R_2(\theta)=\frac{17}{15}R_1(\theta)$, $N_1=35$ and various $N$.}
   \label{figs:PALtest4}
\end{center}
\end{figure}

\begin{figure}[htbp]
\begin{center}
 \subfigure[refraction index $n(\bs x)$]{ \includegraphics[scale=.36]{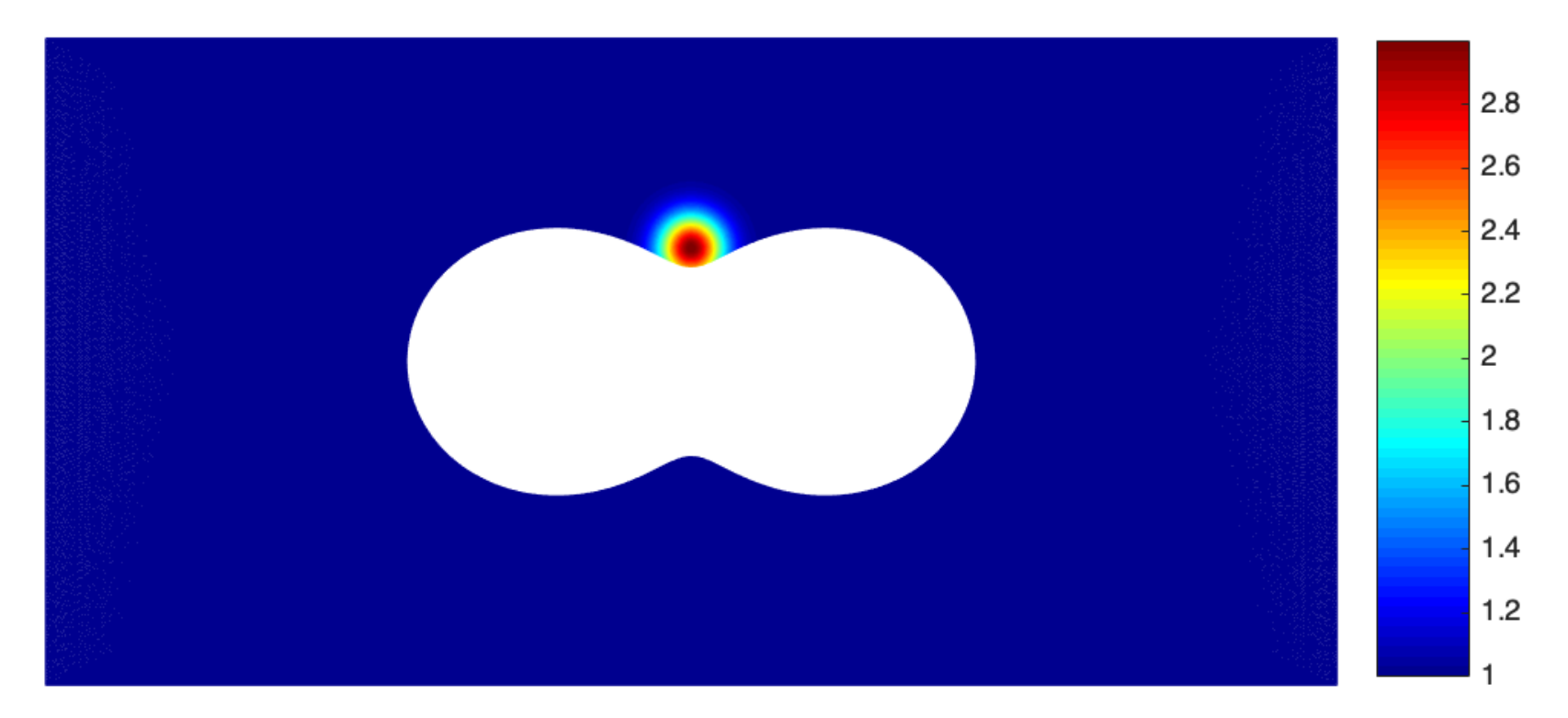}} \qquad 
  \subfigure[${\rm Re}(u)$ and ${\rm Re}(v)$]{ \includegraphics[scale=.36]{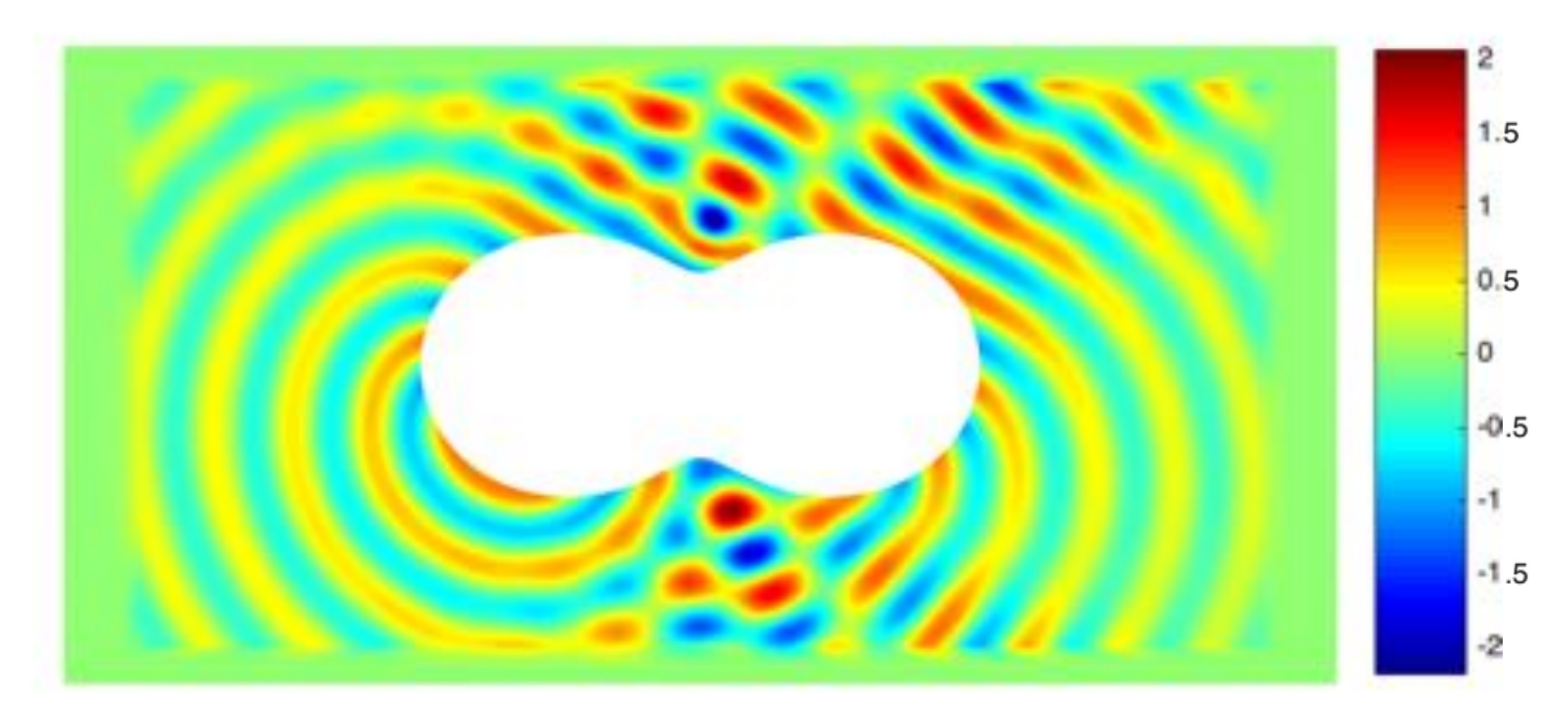}}\\
 \subfigure[errors of (b) against $N$  ]{ \includegraphics[scale=.25]{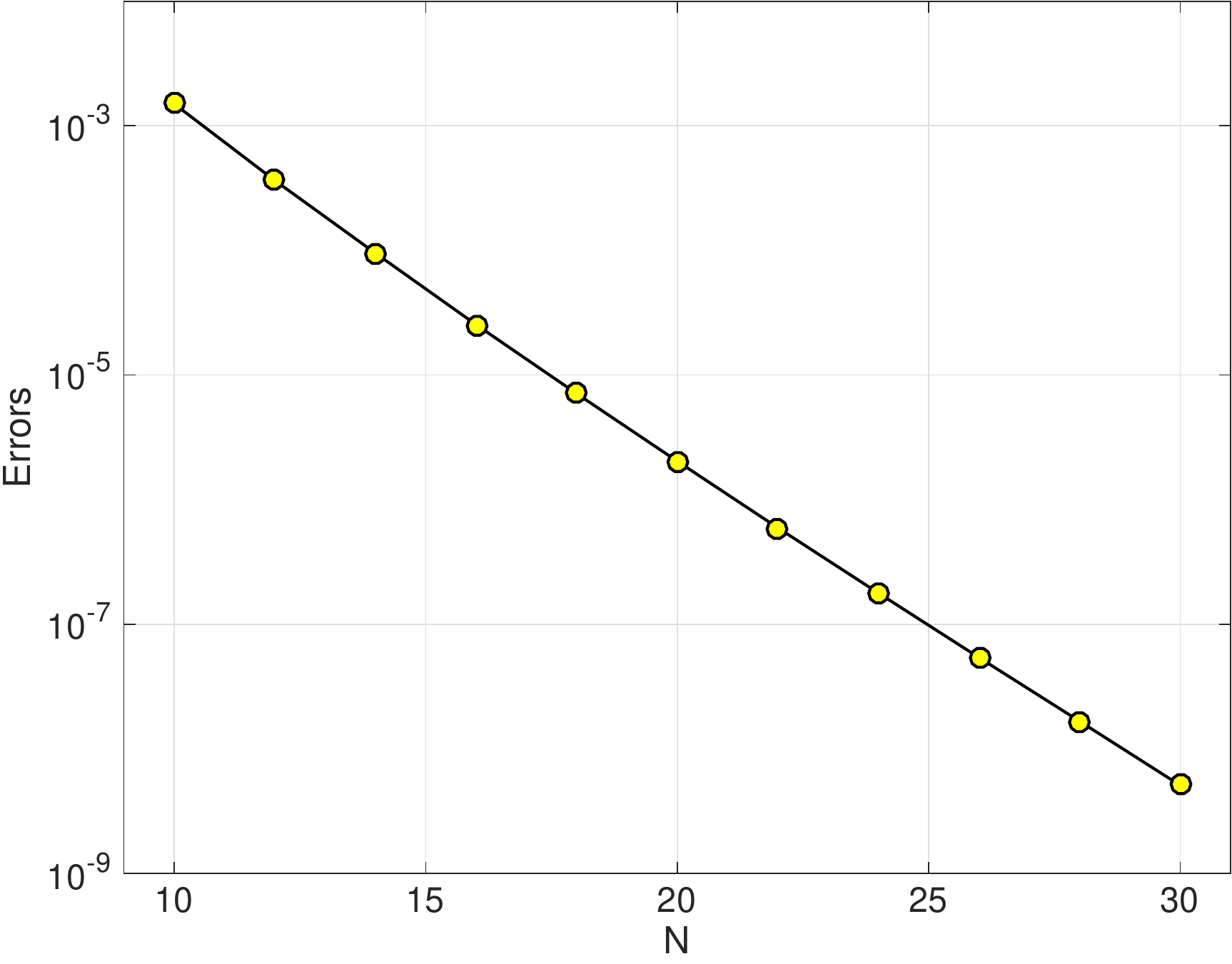}}\quad
 \subfigure[profile along x-axis]{ \includegraphics[scale=.25]{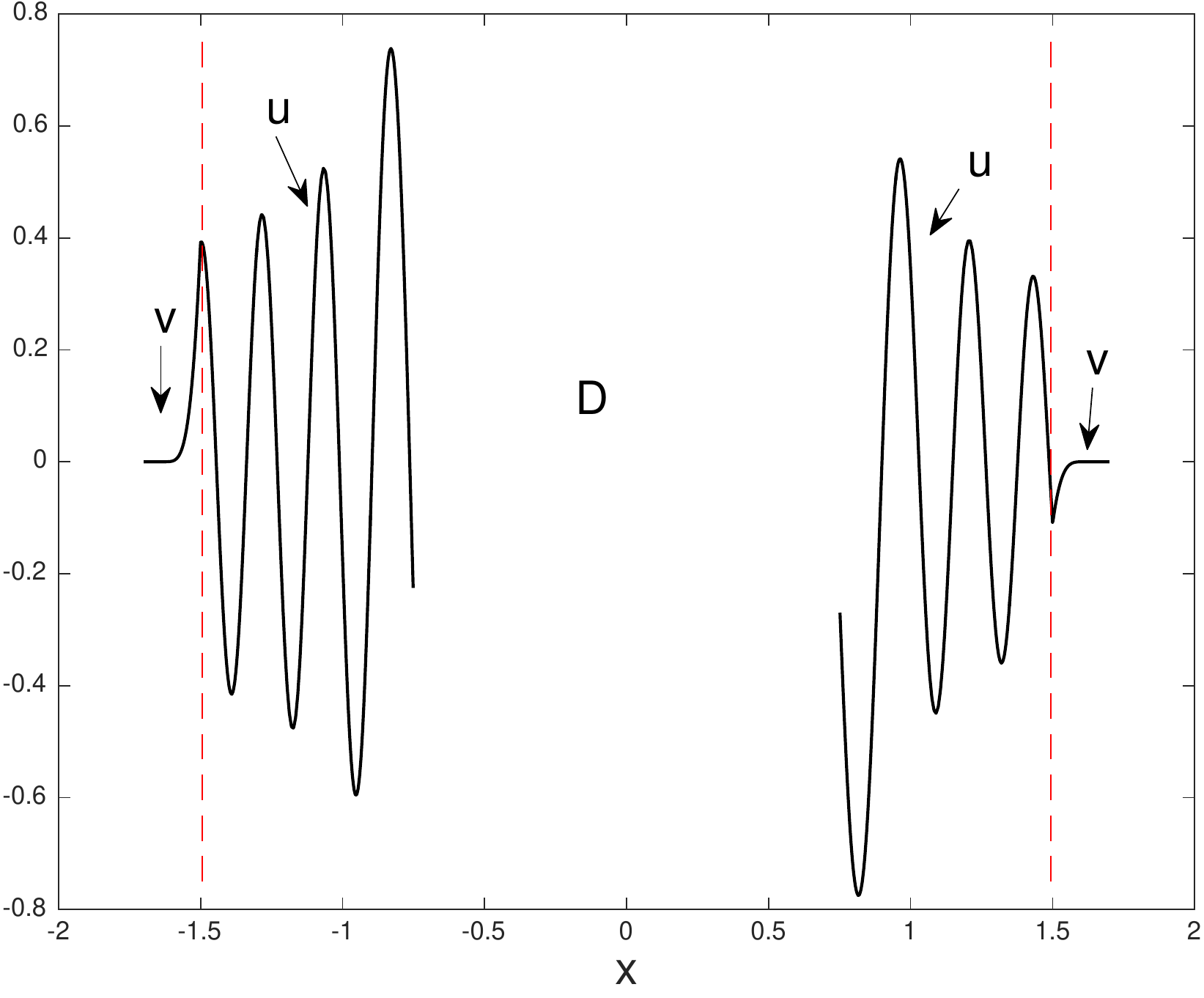}} \quad 
  \subfigure[profile along y-axis]{ \includegraphics[scale=.25]{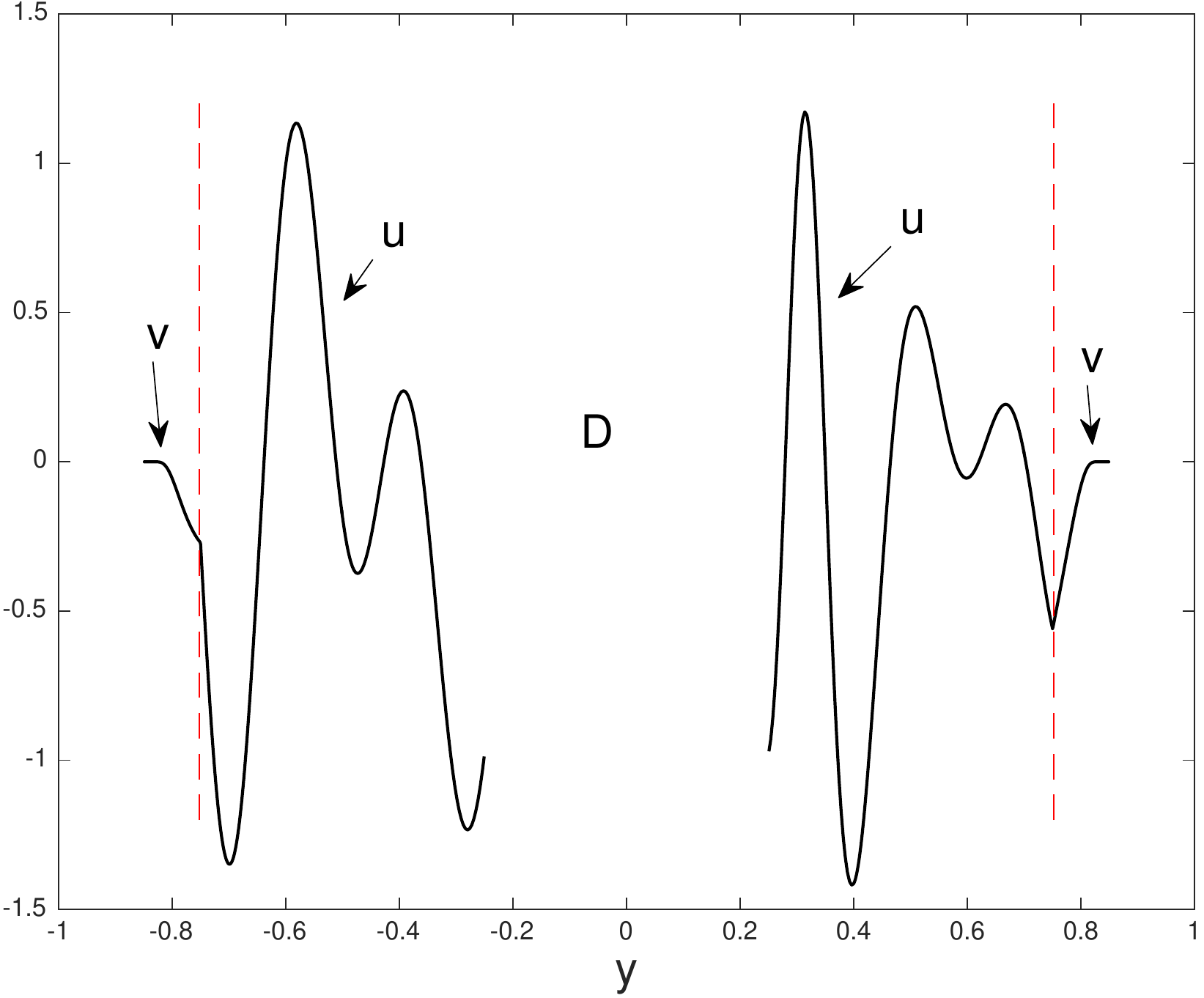}}\\
    \caption{Scattering problem with a locally inhomogeneous medium with a rectangular PAL. The simulation results are obtained with $k=30,$ $\theta_0=\pi/3,$ $\sigma_0=\sigma_1=1,$ $R_0(\theta)$ and $R_1(\theta)$ are computed based on \eqref{eq: R0test3} and \eqref{rect-Omega} and $R_2(\theta)=\frac{17}{15}R_1(\theta)$, $N_1=35$ and various $N$.}
   \label{figs:PALtest5}
\end{center}
\end{figure}

\vskip 10pt

\begin{appendix}

\section{Proof of Theorem \ref{solu-error} }\label{AppendixA0}
\renewcommand{\theequation}{A.\arabic{equation}}

For clarity,  we first consider  \eqref{extproblem} with $g(y)= \hat g_l \sin(ly),$ and then apply the principle of superposition to obtain \eqref{erroreqnUUp}.  Note that  by \eqref{uxyexact},    the exact solution of \eqref{extproblem}  is
\begin{equation}\label{exactU21}
U_l(x,y)=  \hat g_l\, e^{\ri \hat k_{l} x}\sin(l y),
\end{equation}
and the PML-solution of   \eqref{PMLeq1}-\eqref{tranmissionGuide}  is
\begin{equation}\label{exactUp23}
U_{{\rm p},l}(x,y)= \frac{e^{\ri \hat k_{l} (S_d-S)}- e^{-\ri \hat k_{l} (S_d-S)}}{e^{\ri \hat k_{l}  S_d}- e^{-\ri \hat k_{l} S_d}}\,\hat g_l \sin(l y).
\end{equation}
In fact, one  verifies directly that  \eqref{exactUp23} satisfies the PML-equation    \eqref{PMLeq1} and all conditions  in \eqref{Upeqna}-\eqref{tranmissionGuide}.  Since  $S(x)=x$ for $x\in (0,L),$ we find
\begin{equation}\label{exactUp23U}
U_{{\rm p},l}(x,y)= \frac{e^{\ri \hat k_{l} (S_d-2x)}- e^{-\ri \hat k_{l} S_d}}{e^{\ri \hat k_{l}  S_d}- e^{-\ri \hat k_{l} S_d}} \,U_l(x,y)=(1-R_l(x))\,U_l(x,y), \;\;\; \forall\, (x,y)\in \Omega,
\end{equation}
where  the representation of $R_l(x)$ in \eqref{erroreqnUUp} can be obtained  straightforwardly.    Thanks to the identity  \eqref{exactUp23U},  we derive from  the principle of superposition, \eqref{uxyexact} and \eqref{exactU21} that the PML-solution in $\Omega$ is given by
\begin{equation}\label{supersolu}
U_{\rm p}(x,y)=\sum_{l=1}^\infty U_{{\rm p},l}(x,y)=U(x,y)- \sum_{l=1}^\infty \hat g_l\, R_l(x)\, e^{\ri  \hat k_l x}\sin (ly),
\end{equation}
which yields  \eqref{erroreqnUUp}.

It remains to derive the bounds in \eqref{URerrors}.   One verifies readily  that  for $z=\alpha+\beta\,\ri$ with $\alpha, \beta\in {\mathbb R},$
\begin{equation}\label{funda-inequ}
\begin{split}
& |1-e^{\alpha}|\le |1-e^z|\le 1+e^{\alpha},\quad  |\sin z|=\Big|\frac{e^{\ri z}-e^{-\ri z}} {2\ri}   \Big| =\frac{e^{-\beta}} 2 |1-e^{-2\ri z} |.
\end{split}
\end{equation}
Then we have
\begin{equation}\label{eimk}
|1-e^{2 {\rm Im}\{\hat k_l S_d\} }|\le  |{1-e^{- 2\ri \hat k_{l} S_d}}|\le 1+e^{2 {\rm Im}\{\hat k_l S_d\} }.
\end{equation}
Therefore,   (i) for $k>l $ (note $\hat k_l=\sqrt{k^2-l^2}$), we obtain \eqref{URerrors} from \eqref{funda-inequ}-\eqref{eimk} immediately.

On the other hand,   (ii) for $k<l $ (note  $\hat k_l=\ri |\hat k_l|$), the lower and upper  bounds in  \eqref{URerrors2} are a direct consequence of \eqref{eimk}.

 If $k$ is a positive integer,  we find that for the mode $l=k,$  \eqref{exactU21} is still valid (i.e., $U_l=\hat g_l \sin (ly)$), while \eqref{exactUp23} becomes
\begin{equation}\label{newUp}
U_{{\rm p},l}(x,y)=\Big(1-\frac{S(x)} {S_d}\Big)\, \hat g_l\, \sin (ly).
\end{equation}
Thus, for the mode $l=k,$  $R_{l}(x)$ should be replaced by $R_l(x)=-S(x)/S_d$ in the identity \eqref{erroreqnUUp}.


\section{Proof of Theorem \ref{thm:PAL-eqn} }\label{AppendixB}
\renewcommand{\theequation}{B.\arabic{equation}}

\begin{proof}  Without loss of generality, we start with a  general nonsingular Cartesian coordinate transformation: $\tilde x=X(x,y),\; \tilde y=Y(x,y),$  with  Jacobian and Jacobian matrix  given by
\begin{equation}\label{XY2xy}
 \bs J=\frac{\partial (x,y)}{\partial(\tilde x,\tilde y)}=\frac{1} {{\rm det}(\bs J)}
\begin{pmatrix}
Y_y & -X_y  \\[1pt]
-Y_x & X_x  \\[1pt]
\end{pmatrix},\quad {\rm det}(\bs J)=X_xY_y-X_yY_x\not=0.
\end{equation}
 It is known from the standard text book 
  that \eqref{helmhotlz}  can be  transformed into
\begin{equation}\label{Huform}
\mathcal H[U]=\frac 1 n \big\{\nabla\cdot (\bs C\, \nabla U)+k^2 n\, U\big\},
\end{equation}
where $U(x,y)=\tilde U(\tilde x,\tilde y)$ and
\begin{equation}\label{p3frame0}
{\bs C}=
\begin{pmatrix}
C_{11}& C_{12}\\
C_{12}& C_{22}
\end{pmatrix}
=\frac{{\bs J}\,\bs J^t}  {{\rm det}(\bs J)},\quad n=\frac 1{{\rm det}(\bs J)}.
\end{equation}
To  represent $\bs J$ and its determinant in polar coordinates,
 we rewrite the above nonsingular transformation as $\tilde r=R(r,\theta),\, \tilde \theta= \Theta (r,\theta).$ 
Then by the chain rule, we have
\begin{equation*}
\frac{\partial (x,y)}{\partial(\tilde x,\tilde y)} \frac{\partial(\tilde x,\tilde y)}{\partial  (x,y)}= \frac{\partial(\tilde x,\tilde y)}{\partial  (\tilde r,\tilde \theta)}  \frac{\partial(\tilde r,\tilde \theta)}{\partial  (\tilde x,\tilde y)}= \frac{\partial( r, \theta)}{\partial  ( x, y)}  \frac{\partial( x, y)}{\partial  ( r, \theta)} =\bs I_2.
\end{equation*}
As a result,  the matrix $\bs J$ can be computed by
\begin{equation}\label{J2d1}
\begin{split}
\bs J&=\frac{\partial (x,y)}{\partial(\tilde x,\tilde y)}=\bigg(\frac{\partial(\tilde x,\tilde y)}{\partial  (x,y)}\bigg)^{-1}=\bigg(\frac{\partial(\tilde x,\tilde y)}{\partial  (\tilde r,\tilde \theta)}  \frac{\partial(\tilde r,\tilde \theta)}{\partial  ( r, \theta)}   \frac{\partial( r, \theta)}{\partial  ( x, y)}     \bigg)^{-1}  \\
&= \frac{\partial( x, y)}{\partial  ( r, \theta)}   \bigg(  \frac{\partial(\tilde r,\tilde \theta)}{\partial  ( r, \theta)} \bigg)^{-1} \frac{\partial(\tilde r,\tilde \theta)}{\partial  (\tilde x,\tilde y)}.
\end{split}
\end{equation}
Straightforward  calculation leads to
\begin{equation}\label{xyrt}
 \frac{\partial( x, y)}{\partial  ( r, \theta)}=\bs T(\theta)\begin{pmatrix}
 1& 0 \\
 0 & r
 \end{pmatrix}, \; \quad \frac{\partial(\tilde r,\tilde \theta)}{\partial  (\tilde x,\tilde y)}=
 \begin{pmatrix}
 1 & 0\\
 0   & 1/\tilde r
 \end{pmatrix}\bs T^t(\tilde \theta),
\end{equation}
and
\begin{equation}\label{tildert}
 \frac{\partial(\tilde r,\tilde \theta)}{\partial  ( r, \theta)}=\begin{pmatrix}
 R_r & R_{\theta}\\
  \Theta_r & \Theta_{\theta}
 \end{pmatrix},\quad\;\;   \Big(  \frac{\partial(\tilde r,\tilde \theta)}{\partial  ( r, \theta)} \Big)^{-1}=\frac{1}{R_r  \Theta_{\theta}- R_{\theta} \Theta_r}\begin{pmatrix}
 \Theta_{\theta} & -R_{\theta}\\
 - \Theta_r & R_r
 \end{pmatrix}.
\end{equation}
Inserting \eqref{xyrt}-\eqref{tildert} into \eqref{J2d1},  and using  the property:  ${\rm det}(\bs T(\theta))={\rm det}(\bs T(\tilde \theta))=1,$ we find
\begin{equation}\label{RT2rt}
{\rm det}(\bs J)=\frac{r}{R(R_r \Theta_{\theta}- R_{\theta} \Theta_r) }, \quad  \bs J= {{\rm det}(\bs J)}\,{\bs T}(\theta)
\begin{pmatrix}
R \Theta_{\theta}/r & - R_{\theta}/r  \\[1pt]
-R  \Theta_r & R_r  \\[1pt]
\end{pmatrix}
{\bs T}^t(\Theta).
\end{equation}

We now apply the above general formulas to \eqref{asBeqn}, that is,
 $\tilde r=R=S(r,\theta),\; \tilde \theta=\Theta=\theta,$
 so    $R_r=S_r,  R_\theta=S_{\theta}, \Theta_r=0, \Theta_\theta=1.$ Then we  obtain immediately from \eqref{RT2rt} that
\begin{equation}\label{invdetJ}
{\rm det}({\bs J})=\frac{r}{SS_r},\quad {\bs J}={\bs T}(\theta)   \begin{pmatrix}
\frac{1}{S_r} & -\frac{S_{\theta}}{S S_r}\\[4pt]
0 & \frac{r}{S}
\end{pmatrix} {\bs T}^t(\theta).
\end{equation}
Then,  ${\bs C}$ and $n$ in \eqref{Cn2dpolarform} can be derived directly from  \eqref{p3frame0} and \eqref{invdetJ}.
This ends the proof.
\end{proof}

\end{appendix}



\end{document}